\newcommand{\Lsu}[2]{\bigwedge\nolimits^{#1}{#2}}
\newcommand{\Lgiu}[2]{\bigwedge\nolimits_{#1}{#2}}
\newcommand{\rn}[1]{{\mathbb R}^{#1}}
\newcommand{\R}{\mathbb R}
\newcommand{\he}[1]{{\mathbb H}^{#1}}
\newcommand{\mc}{\mathcal }
\newcommand{\eps}{\varepsilon}
\newcommand{\N}{\mathbb N}
\newcommand{\e}{\mathrm{Euc}}
\def \N {\mathbb{N}}
\def \R {\mathbb{R}}
\def \H {\mathbb{H}}
\def \Distr {\mathcal{D}}
\def \d {\mathrm{d}}
\def \de {\partial}
\def \e {\varepsilon }
\def \ker {\mathrm{Ker}}
\def \dsy {\displaystyle}
\def \Span {\mathrm{span}}
\def \h {\mathfrak{h}}
\def \emptyset {\varnothing}
\newtheorem{teo}{Theorem}[section]
\newtheorem{prop}[teo]{Proposition}
\newtheorem{coro}[teo]{Corollary}
\newtheorem{lemma}[teo]{Lemma}
\newtheorem{defi}[teo]{Definition}
\newtheorem{oss}[teo]{Remark}
\title[$L^p$-Hodge Decomposition in Sub-Riemannian Contact Manifolds]{$L^p$-Hodge Decomposition with Sobolev classes in Sub-Riemannian Contact Manifolds}
\author[A. Baldi, A. Rosa]{
 Annalisa Baldi, 
 Alessandro Rosa 
}
\begin{document}

\begin{abstract}

Let  $1<p<\infty$. In this article we establish an $L^p$-Hodge decomposition theorem on sub-Riemannian compact contact manifolds without boundary, related to the Rumin complex of differential forms. Given an $L^p$- Rumin's form,
we adopt an  approach in the spirit of   
 Morrey's book  \cite{morrey} (further performed in \cite{ISS}) to obtain a decomposition with higher regular ``primitives''   i.e.  that belong to suitable Sobolev classes. Our proof relies on recent results obtained in \cite{BFP2} and \cite{BTT}.
\end{abstract}


\keywords{Hodge decomposition, differential forms, contact  manifolds, maximal hypoelliptic operators}

\subjclass{58A10,  35R03, 26D15,  35H10, 53D10, 43A80, 58J99,
46E35}

\maketitle

\section{Introduction}
A well-known result on complete Riemannian manifolds $(M,g)$ is the {Hodge decomposition } with respect to the Laplace-Beltrami-Hodge operator $\Delta_M:=\d\delta+\delta\d$, where $\d$ is the de Rham exterior differential and $\delta$ is its  $L^2$ formal adjoint operator. Roughly speaking, we can decompose a differential form into its exact, coexact and harmonic components.
The first results about Hodge decompositions date back to the works of Helmholtz \cite{helmholtz}, Hodge \cite{hodge1}, \cite{hodge2}, de Rham \cite{deRham1}, \cite{deRham2} and Weyl \cite{weyl}. Hodge and de Rham understood and established the connection between cohomology theory and the harmonic forms, while Weyl introduced the method of orthogonal projection in this context, applying it to the boundary value problems in the Euclidean setting.
In $1858$, Helmholtz \cite{helmholtz} proved a result that certain vector fields can be decomposed into the sum of an irrotational, i.e.\,curl-free, vector field and a solenoidal, i.e.\,divergence-free vector field, introducing the first proof of a version of the Hodge decomposition theorem. This result was used to solve boundary value problems arising from the study of hydrodynamic systems. In $1934$, Leray \cite{leray} used the $L^2$-harmonic projection to study the Navier-Stokes equations. In general, the Hodge decomposition  has great relevance in the study of PDEs, boundary value problems, quasi-conformal maps, and in the potential theory on Riemannian manifolds (we refer e.g. to \cite{schwarz}, \cite{duff_spencer},  \cite{ISS} and \cite{XiDong2}).

Let us denote by $\Lambda^h T^*M$ the $h-$exterior power of the cotangent boundle. In $1949$, Kodaira \cite{kodaira} proved the $L^2$-decomposition of $h$-forms in complete Riemannian manifolds as
$$L^2(M,\Lambda^h T^*M)=\mathcal{H}^{h,2}\oplus \overline{\d C_0^\infty(M,\Lambda^{h-1} T^*M)}\oplus\overline{\delta C_0^\infty(M,\Lambda^{h+1} T^*M)},$$
where $\mathcal{H}^{h,2}:=\ker\,\Delta_M\cap L^2(M,\Lambda^h T^*M)$ is the space of $L^2$-harmonic forms. Moreover, in $1991$, Gromov \cite{gromov_L2} proved that, under suitable assumption on the Laplacian $\Delta_M$, a stronger $L^2$-Hodge decomposition. 

The proof of the $L^p$-Hodge decomposition is considerably more complex. 
The study of the $L^p$-decomposion on compact Riemannian manifolds can be found, e.g., in Morrey'book  \cite{morrey}.  In 1995, Scott \cite{scott} proved the following $L^p$-Hodge decomposition  on compact Riemannian manifolds without boundary,  
\begin{equation}\label{eq: 2 gennaio prima}
	L^p(M,\Lambda^h T^*M)=\mathcal{H}^{h,p}\oplus \d W^{1,p}(M,\Lambda^{h-1} T^*M)\oplus\delta W^{1,p}(M,\Lambda^{h+1} T^*M)\,,
\end{equation}
for every $1<p<\infty$, 
where $\mathcal{H}^{h,p}:=\ker\,\Delta_M\cap L^p(M,\Lambda^h T^*M)$ is the space of $L^p$-harmonic forms. The general case $L^p$, for Sobolev classes in manifold with boundary, has been extensively studied in \cite{ISS}.

\medskip

The aim of this paper is to obtain a decomposition, analogous to \eqref{eq: 2 gennaio prima}, in the setting of compact sub-Riemannian contact manifolds without boundary. We replace the exterior differential $\d$ by a more suitable differential, denoted by $\d_C^M$, which acts on differential forms adapted to the contact geometry, the so-called {Rumin complex}, introduced by Rumin in \cite{rumin_jdg}. Roughly speaking, a contact manifold is given by the couple $(M,H)$, where $M$ is a smooth odd-dimensional (connected) manifold of dimension $2n+1$ and $H$ is the so called contact structure on $M$, that is, $H$ is a smooth distribution of hyperplanes which is maximally non-integrable: given $\theta^M$ a smooth 1-form defined on $M$ such that $H=\mathrm{Ker}\,\theta^M$, then $\d\theta^M$ restricts to a non-degenerate 2-form on $H$.   To be maximally non-integrable means, in some sense,
 that the contact subbundle $H$
is as far as possible from being integrable (see  \cite{mcduff_salamon}, Section 3.4). 
Therefore, a measure on a contact manifold $(M,H)$ can be defined through the non-degenerate top degree form $\theta^M\wedge(\d\theta^M)^n$.
There exists, in addition,  a unique vector field $\xi^M$ transverse to $\ker\,\theta^M$ (the so-called Reeb vector field) such that $\theta^M(\xi^M)=1$ and $\mathcal L_{\xi^M}=0$.

In this paper, following Rumin (see \cite{rumin_jdg}, p.288), we assume that there is a metric $g^M$ which is globally adapted to the symplectic form $\d\theta^M$. More specifically,  there exists
  an endomorphism $J$ of $H$ such that $J^2=-Id$,  $
\d\theta^M(X,JY)=-\d\theta^M(JX,Y)
$
for all $X, Y\in H$, 
and $\d\theta^M(X,JX)>0$ for all $X\in H\setminus\{0\}$. Then, if $X,Y\in H$ we define  $g_H(X,Y):=\d\theta^M(X,JY)$. Finally, we extend $J$ to $TM$ by setting $J\xi^M=\xi^M$ and setting $g^M(X,Y)=\theta^M(X)\theta^M(Y)+\d\theta^M(X,JY)$ for all $X,Y\in TM$.

The couple $(M,H)$ equipped with the Riemannian metric $g_H$  is called a \emph{sub-Riemannian} contact manifold and in the sequel it will be denoted by $(M,H,g^M)$, where $g^M$ is obtained as above.  In any sub-Riemannian contact manifold $(M,H,g^M)$, we can define a sub-Riemannian
distance $d_M$ (see e.g., [43]) inducing on $M$ the same topology of $M$ as a manifold.

It is well known, by Darboux theorem, that local models  of all
 contact manifolds are given by the so-called  Heisenberg groups (see e.g. \cite{mcduff_salamon}, p. 112).
The Heisenberg groups  are the simplest example of
a stratified nilpotent Lie group, endowed with a contact structure. We recall that
the Heisenberg group $\he n$  is the Lie group with  stratified nilpotent Lie algebra $\mathfrak h$
of step 2
\begin{eqnarray*}
\mathfrak h = \mathrm{span}\,\{X_1,\dots,X_n,Y_1,\dots,Y_n\}\oplus\mathrm{span}\,\{T\}:= \mathfrak h_1\oplus 
\mathfrak h_2,
\end{eqnarray*}
where the only nontrivial commutation rules are $[X_j,Y_j]=T$, $j=1,\dots,n$. 
We denote by
$\theta^{\mathbb H }$ the 1-form on $\he n$ such that $\ker\,\theta^{\mathbb H} = \mathfrak h_1$ and $\theta^{\mathbb H}(T)=1$.
We recall also that
 $\he n$ can be identified with $\rn{2n+1}$ through the exponential map. The stratification
 of the algebra induces a family of dilations $\delta_\lambda$ in the group via the exponential map,
\begin{equation}\label{uno}
\delta_\lambda=\lambda\textrm{ on }\mathfrak{h}_1,\quad \delta_\lambda=\lambda^2 \textrm{ on }\mathfrak{h}_2,
\end{equation}
which are analogous to the Euclidean homotheties.

Heisenberg groups  can be viewed as sub-Riemannian contact manifolds. If we choose on the contact sub-bundle of $\he n$ a left-invariant metric, it turns out that
the associated sub-Riemannian metric is also left-invariant. It is customary to call this
distance in $\he n$ a Carnot-Carath\'eodory distance (note  
that all left-invariant sub-Riemannian metrics on Heisenberg groups are bi-Lipschitz equivalent).

Differential forms on $\mathfrak{h}$  split into 2 eigenspaces under $\delta_\lambda$ (see \eqref{uno}), therefore the de Rham complex lacks scale invariance under the  anisotropic dilations $\delta_\lambda$.
 A way to recover scale invariance is to replace the de Rham complex by an intric  complex of differential forms, introduced by M. Rumin in \cite{rumin_jdg}. 
Smooth differential forms of degree $h$ are substituted by smooth sections of a vector bundle $E_0^h$, and Rumin's substitute for de Rham's exterior
differential $\d$ is a linear differential operator $\d_C^\mathbb H$ from sections of $E_0^h$
to sections of $E_0^{h+1}$ ($0\le h\le 2n+1$) such
that $(\d_C^\mathbb H)^2=0$.

%

 Rumin's construction of the intrinsic complex makes sense for arbitrary contact manifolds $(M,H)$ (see \cite{rumin_jdg}) and it  is invariant under contactomorphisms.
If $M$ has dimension $2n+1$, let $h=0,\ldots,2n+1$.  If $h\leq n$, the vector bundle $E_0^h$ is a subbundle of $\Lambda^h H^*$. If $h\geq n$, $E_0^h$ is a subbundle of $\Lambda^h H^*\otimes (TM/H)$ and
 we have:
\begin{itemize}
\item[i)] $(\d_C^M)^2=0$;
\item[ii)] the complex $(E_0^\bullet,\d_C^M)$  is homotopically equivalent to de Rham's complex
$ (\Omega^\bullet,\d)$; 
\item[iii)]  $\d_C^M: E_0^h\to E_0^{h+1}$ is a homogeneous differential operator in the 
horizontal derivatives
of order $1$ if $h\neq n$, whereas $\d_C^M: E_0^n\to E_0^{n+1}$ is a homogeneous differential operator in the 
horizontal derivatives
of order $2$.
\end{itemize}

 Given $(M,H,g^M)$, the scalar
product on $H$ determines  a norm on the
line bundle $T M/H$. Therefore for any $h$, the vector spaces $E_0^h$
are endowed with a scalar product. Using $\theta^M\wedge \d\theta^M$ as
a volume form, one gets $L^p$-norms on spaces of smooth Rumin's differential forms on $M$. 

We denote by $\ast$ the Hodge duality associated with the inner product in
$E_0^\bullet$ and the
volume form, and by $\delta_C^M$ the formal adjoint in $L^2$  of the operator $\d_C^M$, on $E_0^h$ we have $$\delta^M_C=(-1)^h\ast \d_C^M \ast $$  (see \cite{rumin_jdg} p. 288).
Starting from $\d^M_C$ and $\delta_C^M$,  Rumin has defined in \cite{rumin_jdg}  a suitable Hodge-type Laplacians on $M$ that  we can write as follows:
\begin{equation}\label{defi.RuminLapcontman intro}
\Delta_{M,h}:=\left\{
\begin{array}{ll}
\d^M_C\delta^M_C+\delta^M_C\d^M_C,\quad&\text{if $h\neq n,n+1$}, \\
(\d^M_C\delta^M_C)^2+\delta^M_C\d^M_C,\quad&\text{if $h=n$} \\
\d^M_C\delta^M_C+(\delta^M_C\d^M_C)^2,\quad&\text{if $h=n+1$}\,.
\end{array}
\right. 	
\end{equation}
We stress that, from the definition above, the Laplacians have order $2$ or $4$ depending on the order of the differential forms they act on. In the same paper, Rumin obtain the hypoellipticity of the contact complex, showing that $\Delta_M$ is maximal hypoelliptic. Moreover,  the following decomposition (see Corollary p. 290 in \cite{rumin_jdg}) for smooth compactly differential $h$-forms  is obtained,
\begin{equation}\label{25gennaio}
C_0^\infty(M, E_0^h)=\mathrm{Ker}\Delta_{M,h}\oplus\mathrm{Im} \Delta_{M,h}\,,
\end{equation}
(see also Corollary \ref{teo.HodgedecthCinf} below).

In this paper we show  an $L^p$-Hodge type decomposition akin to \eqref{eq: 2 gennaio prima}. This achievement relies on some recent results obtained in \cite{BFP2} and \cite{BTT} (also related to \cite{BFP3}). In particular, a crucial step in generalizing the Hodge-decomposition to Sobolev classes   relies on the validity of a Sobolev-Gaffney inequality, established in \cite{BTT} (on the setting of contact manifolds with bounded geometry). The Gaffney inequality allows to adapt to our context a variational approach, firstly  to obtain an 
$L^2$-decomposition (see \cite{morrey}, and Appendix A) and then to ensure the $L^p$ differentiability and integrability of $L^2$-decomposition.
Keeping in mind that $\he n$ is a local model of $M$, to introduce the notion of Sobolev space on $M$, we will make use of the analogous notion  in Heisenberg groups $\he n$. The definition of the so-called Folland-Stein Sobolev spaces  is associated with the stratification of the Lie algebra of $\he{n}$ and  we refer, e.g., to Section 4 of \cite{folland}. Roughly speaking, 
if $\ell\in\mathbb N$ and $p\ge 1$, the Sobolev space $W^{\ell,p}(\he n)$ can be defined as follows. Fix an orthonormal basis $\{W_i,\, i=1,\dots,2n+1\} $ of $\mathfrak h$ such
that $W_i\in \mathfrak h_1$ for $i=1,\dots,2n$ and $W_{2n+1}\in\mathfrak h_2$. 
We call {\sl homogeneous order} of a monomial in $\{W_i\} $ its degree of homogeneity with respect
to the group dilations $\delta_\lambda$, $\lambda>0$.
We say that a differential form on $\he n$ belongs to $W^{\ell,p}(\he n)$ if all its derivatives of homogeneous order $\le \ell$ along $\{W_i\} $
belong to $L^p(\he n)$. 
Using suitable-bounded charts, this notion extends to compact  sub-Riemannian contact manifolds $M$ (we refer to Section \ref{subsec:Sobspconmangeom} below for precise definitions and properties).

Our main results reads as follows.
\begin{teo}\label{teo.HodgedecthLpintro}
Let $(M,H,g^M)$ be a sub-Riemannian compact contact manifold without boundary and let $1\le h\le 2n$. If $1<p<\infty$, we have the decomposition 
\begin{equation}\label{eq.decompLphodgeintro}
L^p(M,E_0^h)=\mathcal{H}^h\oplus \d_C\Big(W^{s,p}(M,E_0^{h-1})\Big)\oplus \delta_C\Big(W^{r,p}(M,E_0^{h+1})\Big),
\end{equation}
where $s=2$ if $h=n+1$ and $s=1$ otherwise, and $r=2$ if $h=n$ and $r=1$ otherwise.
\end{teo}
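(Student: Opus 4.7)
The plan is to obtain the decomposition by first establishing its $L^2$ analog through a Green operator for the Rumin Laplacian $\Delta_{M,h}$, and then bootstrapping both summands to the right Sobolev class via the $L^p$-estimates for maximal hypoelliptic operators from \cite{BFP2} combined with the Sobolev-Gaffney inequality of \cite{BTT}. The driving identity is
\begin{equation*}
\omega = P_{\mathcal H}\omega + \Delta_{M,h} G_h\omega,
\end{equation*}
where $G_h$ is a Green operator for $\Delta_{M,h}$ and $P_{\mathcal H}$ is the orthogonal projection onto $\mathcal{H}^h$; expanding $\Delta_{M,h}$ according to the degree-dependent formula \eqref{defi.RuminLapcontman intro} automatically rewrites $\Delta_{M,h}G_h\omega$ as $\d_C^M\beta_\omega + \delta_C^M\gamma_\omega$.

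\textbf{$L^2$ theory and construction of the primitives.} From the smooth decomposition \eqref{25gennaio} and Rumin's maximal hypoellipticity, on the compact $M$ the Laplacian $\Delta_{M,h}$ has closed range on $L^2(M,E_0^h)$ and admits a bounded inverse $G_h$ on $(\mathcal{H}^h)^\perp$. Substituting the formula for $\Delta_{M,h}$ into the identity above and regrouping, one reads off $\beta_\omega$ and $\gamma_\omega$ explicitly: for $h\neq n,n+1$ these are $\beta_\omega=\delta_C^M G_h\omega$ and $\gamma_\omega=\d_C^M G_h\omega$; for $h=n$ they become $\beta_\omega=\delta_C^M\d_C^M\delta_C^M G_n\omega$ and $\gamma_\omega=\d_C^M G_n\omega$; at $h=n+1$ the roles of $\beta$ and $\gamma$ are interchanged. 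This already produces an $L^2$ decomposition whose three summands are mutually $L^2$-orthogonal.

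\textbf{Passing to $L^p$ and counting orders.} The key analytic ingredient is that $G_h\colon L^p\to W^{k,p}$ is bounded, with $k$ equal to the order of $\Delta_{M,h}$ (i.e.\ $k=2$ for $h\neq n,n+1$ and $k=4$ for $h=n,n+1$); this is the $L^p$ Calder\'on-Zygmund statement for maximal hypoelliptic operators from \cite{BFP2}. Since $\d_C^M$ has horizontal order $1$ everywhere except on $E_0^n$, where it has order $2$, and $\delta_C^M$ symmetrically has order $1$ except on $E_0^{n+1}$, a direct count of orders shows that $\beta_\omega\in W^{s,p}(M,E_0^{h-1})$ and $\gamma_\omega\in W^{r,p}(M,E_0^{h+1})$ with the stated $(s,r)$: for example at $h=n$ we have $G_n\omega\in W^{4,p}$, so stripping off three order-$1$ operators from $\delta_C^M\d_C^M\delta_C^M G_n\omega$ leaves $W^{1,p}$, while $\d_C^M G_n\omega$ loses two derivatives and lies in $W^{2,p}$. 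The continuity of $P_{\mathcal H}$ on $L^p$, and the closedness and directness of the sum, follow from the Sobolev-Gaffney inequality of \cite{BTT}, which controls Sobolev norms of Rumin forms by $L^p$-norms of their $\d_C^M$ and $\delta_C^M$ images plus harmonic content.

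\textbf{Main obstacle.} The technically heaviest step is the $L^p$ boundedness of $G_h$ across all degrees: since $\Delta_{M,h}$ jumps from order $2$ to order $4$ at the middle degrees, one cannot simply quote a single Calder\'on-Zygmund theorem, and must treat $h=n,n+1$ separately within the fourth-order theory. Once this $L^p$-theory for $G_h$ is in place, the remaining pieces (orthogonality on $L^2$, its persistence in $L^p$, closedness of the three summands, and verification that $\beta_\omega,\gamma_\omega$ land in the correct Sobolev spaces) should go through as sketched.
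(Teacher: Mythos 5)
Your overall architecture (Green operator for $\Delta_{M,h}$, expansion of $\Delta_{M,h}G_h\omega$ into $\d_C\beta_\omega+\delta_C\gamma_\omega$, order counting for the Sobolev exponents $s,r$, Gaffney inequality for closedness) matches the paper's, and your identification of the primitives and your count of horizontal orders at the middle degrees are correct. The gap is at the step you treat as a quotable black box: the boundedness of $G_h\colon L^p\to W^{2\ell,p}$. There is no ``$L^p$ Calder\'on--Zygmund statement for maximal hypoelliptic operators'' in \cite{BFP2} (that reference supplies the homotopy operators and the Sobolev embeddings); what is actually available is the \emph{a priori} estimate $\|\gamma\|_{W^{2\ell,p}}\le C(\|\Delta_{M,h}\gamma\|_{L^p}+\|\gamma\|_{L^p})$ of Theorem \ref{teo.globalmaxhypocont}, built on the local estimate \eqref{eq:lp-hypo} from \cite{BFT3}. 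That estimate only helps once you already know that the potential $\gamma=G_h\omega$ lies in $L^p$; the variational construction a priori places it only in $W^{2\ell,2}$. Getting $\gamma$ into $L^p$ is precisely the technical heart of the proof and it splits into two genuinely different arguments: for $2<p<\infty$ one iterates the Sobolev embedding coming from the homotopy formula of \cite{BFP2} (Remark \ref{oss.SobembstimaCQ}) against the maximal hypoellipticity estimate, climbing from exponent $2$ to $p$ in finitely many steps (Theorem \ref{teo.Lpreg2mpminfty}); for $1<p<2$ no embedding is available and one must instead prove $\|G(\alpha)\|_{L^p}\le c\|\alpha\|_{L^p}$ on smooth forms by a duality argument exploiting the symmetry of $G$ and its already-established boundedness on $L^{p'}$ with $p'>2$, and then extend by density (Proposition \ref{prop.regLp1mpm2} and Remark \ref{venerdi}). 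Your ``main obstacle'' paragraph locates the difficulty in the order jump at $h=n,n+1$, but that is not where the problem lies: the real issue is that an a priori estimate does not by itself invert the operator in $L^p$, and the range $p<2$ cannot be reached by any regularity bootstrap.

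A secondary, smaller imprecision: the three summands are orthogonal only in $L^2$; for $p\ne 2$ the directness of the sum is not ``persistence of orthogonality'' but a separate uniqueness statement (Lemma \ref{lemma.orthLpforms bis}), proved by pairing a relation $\d_C\alpha+\delta_C\beta+\xi=0$ against the smooth Hodge decomposition of an arbitrary test form. Likewise, the $L^p$-continuity of the harmonic projection comes from the finite-dimensionality of $\mathcal H^h$ (Proposition \ref{prop.harmoprojj}) rather than directly from the Gaffney inequality.
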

Notice in the statment above the game of the exponents in the Sobolev spaces,  due to the different order of $\Delta_M$ when acting on forms of degree $n$ or $n+1$.

\medskip

The proof of this results needs of several preliminary steps. Firstly, we prove \eqref{eq.decompLphodgeintro} for $p=2$. To this aim, we first follow the line of the variational approach due to Morrey \cite{morrey}: if we denote by $\mathcal H$ the space of {harmonic} forms and, given a form in  $L^2\cap \mathcal H^\bot$, we obtain  the existence  of  a potential belonging to a suitable Sobolev space (depending on the order of the differential form we are considering).  Secondly, thanks to the $L^2$-global maximal hypoellipticity of $\Delta_M$ (which we prove for any $p$ in Theorem \ref{teo.globalmaxhypocont}), we are able to  show that the potential belongs to a higher-order Sobolev space. Thus, we obtain \eqref{eq.decompLphodgeintro} when $p=2$, passing also through
 the Hodge decomposition for smooth forms.
In order to prove \eqref{eq.decompLphodgeintro} for general $p$, our approach is now divided into two parts. First, we examine the case $2<p<\infty$, where we can use the information obtained for $p=2$ (recall that $M$ is compact). The case $1<p<2$ will follow from a duality argument, initially applied to smooth functions and then extended to $L^p$ by density (see also \cite{scott}). The proof of Theorem \ref{teo.HodgedecthLpintro} relies on three main ingredients: a recent result from \cite{BFP2} on Sobolev embedding, the $L^p$-maximal hypoellipticity of $\Delta_M$ proved in Theorem \ref{teo.globalmaxhypocont}, and a Gaffney inequality from \cite{BTT}, Theorem 5.4.

\medskip

The paper is organized as follows. In Section \ref{sec:Hnprop}, we review some definitions and properties of Heisenberg groups, and briefly discuss the Rumin complex. In particular, in Section \ref{subsec:SobSpHn}, we recall the definition of Sobolev spaces and related properties.
In Section \ref{subsec:Sobspconmangeom}, we clarify the notion and properties of manifolds with bounded geometry and the definition of Sobolev spaces. Additionally, we present some consequences of the Gaffney-type inequality, also discussed in Theorem \ref{teo.GaffIneqcontman}, and some results related to Sobolev inequalities in this setting. A global maximal hypoellipticity on contact manifolds with bounded geometry is proved in Section \ref{subsec:Rumlapmangeom}. As explained earlier, our proof of the decomposition \eqref{eq.decompLphodgeintro} initially passes through an $L^2$-{Hodge decomposition}, which is obtained in Section \ref{HodgeL2}, together with the Hodge decomposition for smooth forms in Section \ref{sec:finalcap}. Appendix A contains a large part of the proofs from Section \ref{HodgeL2}. In fact, although we deal with the Rumin complex, many arguments  are similar to those used in the Riemannian setting for the de Rham complex. However, in order to keep the paper self-contained, we decided to include them to highlight the main differences when dealing with differential forms in degree $n, n+1$, where the Laplacian $\Delta_M$ has order $4$ (additionally, we aim to emphasize the relevance of our Gaffney inequality in these proofs).
The main results of the paper are presented in Section \ref{sec:HodgedecLPformss}. The key ingredients in the proof of Theorem \ref{teo.HodgedecthLpintro} (compare with Theorem \ref{teo.HodgedecthLp}) are some regularity results obtained in Theorems \ref{teo.Lpreg2mpminfty} and \ref{teo.Lpreggeneral}. Furthermore, on compact manifolds, we prove in Section \ref{sec:poincineq} a more refined version of the inequalities stated in Theorem \ref{teo.GaffIneqcontman}. Appendix B contains some technical results related to Theorem \ref{teo.GaffIneqcontman}, which are useful for Section \ref{sec:poincineq}.


\section{Some preliminary results on Heisenberg groups and contact manifold}\label{sec:Hnprop}
We denote by $\H^n$ the $(2n+1)$-dimensional Heisenberg group, which is the simplest example of non-commutative Carnot group. We denote by $\h$ the stratified nilpotent Lie algebra of left-invariant vector fields of $\H^n$. A standard basis of $\h$ is given by, $i=1,\ldots,n$,
\begin{align}&W^\H_i:=\de_{\dsy x_i}-\frac{1}{2}x_{i+n}\de_{\dsy x_{2n+1}},\quad W_{i+n}^\H:=\de_{\dsy x_{i+n}}+\frac{1}{2}x_{i}\de_{\dsy x_{2n+1}},\nonumber\\
 &W_{2n+1}^\H:=\de_{\dsy x_{2n+1}},\nonumber\end{align}
where the only nontrivial commutation rules are $[W^\H_i,W^\H_{i+n}]=W^\H_{2n+1}$, for every $i=1,\ldots,n$.
The \textit{horizontal layer} $\h_1$ is the subspace of $\h$ spanned by $W_1^\H,\ldots,W_{2n}^\H$, so that we refer to $W_1^\H,\ldots,W_{2n}^\H$ as the \textit{horizontal derivatives}. Denoting by $\h_2$ the linear span of $W_{2n+1}^\H$, the two-step stratification of $\h$ is expressed by 
$$\h=\h_1\oplus\h_2.$$

A point $p\in\H^n$ is denoted by $p=(x,y,t)$ such that $x,y\in\R^n$ and $t\in\R$.
The stratification of $\h$ induces a family of nonisotropic dilations $\{\delta_\lambda\}_{\lambda>0}$ in $\H^n$ defined as, for any point  $p=(x,y,t)\in\H^n$,
$$\delta_\lambda(p)=(\lambda x,\lambda y,\lambda^2 t).$$
If we consider two points $p=(x,y,t), q=(\tilde x, \tilde y, \tilde t)\in\H^n$, then the non-commutative group operation is denoted by
$p\cdot\,q=\left(x+\tilde x, y+\tilde y, t+\tilde t +\frac{1}{2}\,\sum_{j=1}^n(x_j\tilde y_j-y_j \tilde x_j)\right).$

For any $q\in\H^n$, the \textit{left translation} $\tau_q:\H^n\to\H^n$ is defined as
$$p\mapsto \tau_q p:=q\cdot\,p.$$
Since $\H^n$ is a connected, simply connected, nilpotent Lie group, thus it is a locally compact group and it turns out that the left Haar measure coincides with the Lebesgue measure.

We consider the so-called Cygan-Konrányi norm, which is a homogeneous norm given by
$$\rho(p):=\left(\left(|x|^2+|y|^2\right)^2+16\,t^2\right)^\frac{1}{4}\,,$$
for every $p=(x,y,t)\in\H^n$, where $|\cdot|$ denotes the Euclidean norm. The associated gauge distance
$$d(p,q):=\rho(p^{-1}\cdot\, q),$$
for every $p,q\in\H^n$, is left-invariant with respect to left-translations, i.e.\,
$$d(\tau_q(p),\tau_q(p'))=d(p,p'),\quad\forall\,p,p',q\in\H^n.$$

The metric $d$ induces a topology on $\H^n$ generated by the Korányi balls defined as
\begin{equation*}
B(p,r):=\{q\in\H^n\,\,|\,\,d(p,q)<r\},
\end{equation*}
for every $p,q\in\H^n$ and $r>0$.

Note that the gauge norm is positively $\delta_\lambda$-homogeneous, i.e.\,$d(\delta_\lambda(p),\delta_\lambda(p'))=\lambda d(p,p')$, for every $p,p'\in\H^n, \lambda>0$, so that the Lebesgue measure of the ball $B(x,r)$ is $r^{2n+2}$ times the measure of the unit ball $B(e,1)$. Thus, the homogeneous dimension of $\H^n$ with respect to $\delta_\lambda$ is 
$$Q=2n+2.$$ 
We observe that the Hausdorff dimension of $\R^{2n+1}$ with respect to $d$ is $Q$, whereas the topological dimension of $\H^n$ is $2n+1$.

\subsection{Sobolev spaces in $\H^n$}\label{subsec:SobSpHn}
We will use the following notations for higher-order derivatives: If $J=(j_1,\cdots,j_{2n+1})$ is a multi-index, we set
\begin{equation*}
W^{J,\H}=(W^\H_1)^{j_1}\cdots(W^\H_{2n+1})^{j_{2n+1}},
\end{equation*}
and
$$|J|:=j_1+\cdots+j_{2n}+j_{2n+1},\quad\quad d(J):=j_1+\cdots+j_{2n}+2\,j_{2n+1},$$
that are, respectively, the order of the differential operator $W^{J,\H}$ and its degree of homogeneity with respect to $\H^n$ group dilations. By the Poincaré-Birkhoff-Witt theorem (see \cite{ricci}), the differential operators $(W^{J,\H})_J$ form a basis of the algebra of left-invariant differential operators in Heisenberg groups.
In particular,
if $d(J)=1$,  we can  consider the horizontal vector fields $\{W^\H_1,\ldots,W^\H_{2n}\}$.

We recall the notion of Folland-Stein Sobolev space (see \cite{folland_stein}).

\begin{defi}\label{defi.SobolevHn}
Let $U\subseteq \H^n$ be an open set, $1\leq p\leq \infty$ and $k\in\N$. 
Then, we define $W^{k,p}(U)$ as the space of all $f\in L^p(U)$ such that
$$W^{J,\H}f\in L^p(U),\quad\text{for all multi-indices $J$ with $d(J)\leq k$},$$
endowed with the norm
$$\|f\|_{W^{k,p}(U)}:=\sum_{d(J)\leq k}\|W^{J,\H}f\|_{L^p(U)}.$$
\end{defi}

Folland-Stein Sobolev spaces have the following properties (see \cite{BTT}, \cite{folland}, \cite{FSSC_houston}).
\begin{teo}\label{teo.denseSobspHn}
Let $U\subseteq \H^n$ be an open set, $1\leq p\leq \infty$ and $k\in\N$. Then,
\begin{enumerate}
\item $W^{k,p}(U)$ is a Banach space;
\item if $p<\infty$, $W^{k,p}(U)\cap C^\infty(U)$ is dense in $W^{k,p}(U)$;
\item if $p<\infty$, then $C_0^\infty(\H^n)$ is dense in $W^{k,p}(\H^n)$;
\item if $1<p<\infty$, then $W^{k,p}(U)$ is reflexive.
\end{enumerate}
\end{teo}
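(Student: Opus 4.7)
\medskip

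\noindent\textbf{Proof plan.} These four properties are all classical in spirit: each mirrors the standard argument for Euclidean Sobolev spaces, but adapted so that the differential operators $W^{J,\H}$ respect the group structure of $\H^n$. The plan is to address them in the order listed.

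For (1), I would verify completeness by a standard distributional argument. Take a Cauchy sequence $\{f_m\}$ in $W^{k,p}(U)$: then each $\{W^{J,\H}f_m\}$ with $d(J)\le k$ is Cauchy in $L^p(U)$, hence converges in $L^p(U)$ to some $g_J$. Set $f=g_0$. Each left-invariant $W^\H_i$ is a first-order differential operator with polynomial coefficients, so $W^{J,\H}f_m\to W^{J,\H}f$ in the sense of distributions on $U$; by uniqueness of distributional limits, $W^{J,\H}f=g_J\in L^p(U)$. Hence $f\in W^{k,p}(U)$ and $f_m\to f$ in the $W^{k,p}$-norm.

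For (2), the idea is the Meyers--Serrin approach, done via right group convolution. The key algebraic fact is that each left-invariant vector field commutes with right convolution, so $W^{J,\H}(f*\varphi_\e)=(W^{J,\H}f)*\varphi_\e$ for a smooth compactly supported mollifier $\varphi_\e$ (e.g.\ a $\delta_\lambda$-rescaled bump), and standard $L^p$ approximation gives $f*\varphi_\e\to f$ in $W^{k,p}$ on any relatively compact subset. To approximate globally on $U$, I would fix an exhaustion $U=\bigcup_j U_j$ by relatively compact open sets with $\overline{U_j}\subset U_{j+1}$, use a smooth partition of unity $\{\chi_j\}$ subordinate to $\{U_{j+1}\setminus\overline{U_{j-1}}\}$, and mollify each $\chi_j f$ with a sufficiently small parameter $\e_j>0$ so that the errors telescope: $\sum_j\|\chi_j f-(\chi_j f)*\varphi_{\e_j}\|_{W^{k,p}}<\e$. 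The sum is locally finite, hence smooth, and lies within $\e$ of $f$ in $W^{k,p}(U)$. The only Heisenberg-specific point is that $\chi_j$ must be differentiated horizontally along $W^\H_i$, which only costs constants since $\chi_j$ is smooth and compactly supported.

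For (3), with $U=\H^n$ and $f\in W^{k,p}(\H^n)$, I would first invoke (2) to reduce to $f\in C^\infty\cap W^{k,p}$, then cut off with $\eta_R(p):=\eta(\rho(p)/R)$ where $\rho$ is the Cygan--Kor\'anyi gauge and $\eta\in C_0^\infty(\R)$ equals $1$ near $0$. Since the horizontal derivatives $W^\H_i\eta_R$ are bounded by $C/R$ (by homogeneity of the gauge away from the origin, handled by standard smoothing near $0$), Leibniz gives $W^{J,\H}(\eta_R f)\to W^{J,\H}f$ in $L^p$ by dominated convergence, yielding $\eta_R f\to f$ in $W^{k,p}$ and $\eta_R f\in C_0^\infty(\H^n)$.

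For (4), the cleanest route is the product-space embedding: the map
\[
T\colon W^{k,p}(U)\longrightarrow \prod_{d(J)\le k} L^p(U), \qquad Tf=(W^{J,\H}f)_{d(J)\le k},
\]
is a linear isometry onto its image for the natural product norm equivalent to the $W^{k,p}$-norm. The image is closed by completeness from (1). Since $L^p(U)$ is reflexive for $1<p<\infty$, finite products of reflexive spaces are reflexive, and closed subspaces of reflexive spaces are reflexive, $W^{k,p}(U)$ is reflexive.

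The main obstacle I anticipate is the mollification step in (2): one must check that the convolution is taken on the correct side so that left-invariant operators pass through, and that the partition-of-unity patching does not spoil the $W^{k,p}$-approximation when higher-order horizontal derivatives distribute across products via the noncommutative Leibniz rule. Once this is set up correctly, (1), (3) and (4) follow by essentially formal arguments.
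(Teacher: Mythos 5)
The paper does not actually prove this theorem: it is stated as a recollection of known properties of Folland--Stein Sobolev spaces, with a pointer to \cite{BTT}, \cite{folland}, \cite{FSSC_houston}. Your proposal supplies the standard arguments, and items (1), (3) and (4) are correct as written: completeness via uniqueness of distributional limits (which covers $p=\infty$ as well, since $L^\infty(U)\subset L^1_{\mathrm{loc}}(U)$), truncation by a gauge cutoff with $|W^{J,\H}\eta_R|\lesssim R^{-d(J)}$ supported in an annulus, and reflexivity via the isometric embedding of $W^{k,p}(U)$ as a closed subspace of a finite product of $L^p(U)$'s.

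The one step that needs repair is the commutation identity in (2). With the group convolution $(f\ast g)(x)=\int_{\H^n} f(y)\,g(y^{-1}\cdot x)\,dy$, a left-invariant vector field $X$ differentiates along right translations of the argument, so
\begin{equation*}
X(f\ast g)(x)=\int f(y)\,\tfrac{d}{dt}\Big|_{t=0}g\big((y^{-1}x)\cdot\exp(tX)\big)\,dy=(f\ast Xg)(x):
\end{equation*}
the derivative falls on the \emph{second} factor. Hence $W^{J,\H}(f\ast\varphi_\e)=f\ast(W^{J,\H}\varphi_\e)$, not $(W^{J,\H}f)\ast\varphi_\e$ as you wrote; as stated, your mollified functions are not controlled in $W^{k,p}$ by the data. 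The fix is exactly the one you anticipated: put the mollifier on the left, $\varphi_\e\ast f$, for which $W^{J,\H}(\varphi_\e\ast f)=\varphi_\e\ast(W^{J,\H}f)$, and the approximate-identity property in $L^p$ ($p<\infty$) then gives convergence of all horizontal derivatives of homogeneous order $\le k$, with $\mathrm{supp}(\varphi_\e\ast f)\subseteq(\mathrm{supp}\,\varphi_\e)\cdot(\mathrm{supp}\,f)$ so the Meyers--Serrin patching over an exhaustion of $U$ goes through as you describe. With that correction the whole argument is sound and is the expected one.
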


We recall the following inequalities and compact embeddings on Sobolev spaces (see \cite{BTT} Remark 2.7, and also \cite{lu_acta_sinica}, \cite{schwarz}) which implies the Ehrling's inequality in $\H^n$.
\begin{teo}\label{teo.compactembHn}
Let $B(e,1)$ be the Korányi ball in $\H^n$. Then, for every $1<p<\infty$, we have the compact embeddings
$$W^{2,p}(B(e,1))\hookrightarrow W^{1,p}(B(e,1))\hookrightarrow L^p(B(e,1)).$$
Moreover, we have the \textit{Ehrling's inequality}: if $f\in W^{2,p}(B(e,1))$, for every $\e>0$, there exists $c_\e>0$ such that
$$\|f\|_{W^{1,p}(B(e,1))}\leq \e\|f\|_{W^{2,p}(B(e,1))}+c_\e\|f\|_{L^p(B(e,1))}.$$
\end{teo}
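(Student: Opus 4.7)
The strategy is to prove the first compact embedding via a subelliptic Rellich--Kondrachov theorem, to bootstrap it to the second compact embedding by a subsequence argument on horizontal derivatives, and then to deduce Ehrling's inequality from the compactness of \(W^{2,p}\hookrightarrow W^{1,p}\) via the standard Ehrling--Peetre contradiction argument.

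For the first embedding I would invoke the Rellich--Kondrachov theorem for Folland--Stein Sobolev spaces on bounded John (equivalently, \((\varepsilon,\delta)\)) domains in the Heisenberg group, as developed for instance by Garofalo--Nhieu or Lu. The Korányi unit ball \(B(e,1)=\{\rho<1\}\) is starlike with respect to the intrinsic dilations \(\delta_\lambda\) (since \(\rho\) is \(1\)-homogeneous) and is therefore a Sobolev extension domain for the Folland--Stein scale, so the compact embedding \(W^{1,p}(B(e,1))\hookrightarrow L^p(B(e,1))\) follows.

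For the second compact embedding, let \((f_k)\subset W^{2,p}(B(e,1))\) be bounded. By definition of the \(W^{2,p}\)-norm in terms of derivatives of homogeneous order \(d(J)\le 2\), both \(f_k\) and each horizontal derivative \(W^\H_i f_k\) (\(i=1,\dots,2n\)) are bounded in \(W^{1,p}(B(e,1))\); the vertical derivative \(W^\H_{2n+1}=[W^\H_i,W^\H_{i+n}]\) is automatically controlled since its homogeneous order equals \(2\). Applying the compact embedding just obtained to \(f_k\) and to each \(W^\H_i f_k\), a diagonal subsequence yields \(f_k\to f\) in \(L^p\) and \(W^\H_i f_k\to g_i\) in \(L^p\). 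Testing against \(\varphi\in C_0^\infty(B(e,1))\) and using that each \(W^\H_i\) is divergence-free with respect to Lebesgue measure, one passes to the limit in the integration-by-parts identity to identify \(g_i=W^\H_i f\) distributionally, so \(f_k\to f\) in \(W^{1,p}(B(e,1))\).

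Finally, Ehrling's inequality follows from the Ehrling--Peetre lemma: if it failed, there would exist \(\varepsilon_0>0\) and a sequence \((f_k)\) with \(\|f_k\|_{W^{1,p}}=1\) but \(\varepsilon_0\|f_k\|_{W^{2,p}}+k\|f_k\|_{L^p}<1\); the first inequality makes \((f_k)\) bounded in \(W^{2,p}\), so by the compact embedding above a subsequence converges in \(W^{1,p}\) to some \(f\) with \(\|f\|_{W^{1,p}}=1\), while \(\|f_k\|_{L^p}<1/k\to 0\) forces \(f\equiv 0\), a contradiction. The only substantive obstacle is really the first step; once the subelliptic Rellich--Kondrachov compactness on the Korányi ball is in place, the remaining arguments are routine functional analysis.
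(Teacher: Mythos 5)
Your argument is correct, and it is worth noting that the paper does not actually prove this statement: it is recalled with references to \cite{BTT} (Remark 2.7), \cite{lu_acta_sinica} and \cite{schwarz}, so your proposal reconstructs what sits behind those citations rather than paralleling an in-text proof. The reduction of the second embedding to the first by applying compactness to $f_k$ and to each horizontal derivative $W^\H_i f_k$ (which are indeed bounded in $W^{1,p}$ by the definition of the Folland--Stein norm, with the vertical derivative absorbed because $d(J)=2$ for $J=e_{2n+1}$), the distributional identification of the limits of $W^\H_i f_k$ via the divergence-free property of the $W^\H_i$, and the Ehrling--Peetre contradiction argument are all sound and are exactly the ``routine functional analysis'' one expects here. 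The only point where your justification is thinner than it should be is the base case: ``$B(e,1)$ is starlike with respect to the dilations $\delta_\lambda$, hence an extension domain'' is not by itself a theorem. What one actually uses is that the generator of the dilations is transversal to $\{\rho=1\}$, which makes the gauge ball an NTA (hence $(\varepsilon,\delta)$, hence John) domain in the Carnot--Carath\'eodory metric; the compact embedding $W^{1,p}\hookrightarrow L^p$ then follows from the Garofalo--Nhieu extension theorem or directly from the $(1,p)$-Poincar\'e inequality on John domains (as in Lu, or Haj\l asz--Koskela). With that reference made precise, your proof is complete and consistent with the sources the paper relies on.
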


\subsection{Multilinear algebra in $\H^n$}\label{subsec:multilinalgHn}

We set $\Lgiu{1}{\h}:=\h=\Span\{W^\H_1,\ldots,W^\H_{2n+1}\}$ and we denote by $\Lsu{1}{\h}$ the dual space of $\Lgiu{1}{\h}$, with the canonical orthonormal basis $\{\d x_1,\ldots,\d x_n,\d y_1,\ldots,\d y_n, \theta^\H\}=:\{\omega^\H_1,\ldots,\omega^\H_{2n+1}\}$, where
\begin{equation}\label{eq.contactformHn}
\theta^\H=\d t -\frac{1}{2}\sum_{j=1}^n(x_j\,\d y_j-y_j\,\d x_j)
\end{equation}
is called the \textit{contact form in $\H^n$}.

We write $\Lgiu{0}{\h}=\Lsu{0}{\h}:=\R$ and we take the exterior algebras $\Lgiu{\bullet}{\h}$ and $\Lsu{\bullet}{\h}$ of $\h$ and $\Lsu{1}{\h}$, respectively as
$$\Lgiu{\bullet}{\h}=\bigoplus_{h=0}^{2n+1} \Lgiu{h}{\h}\quad\text{and}\quad\Lsu{\bullet}{\h}=\bigoplus_{h=0}^{2n+1}\Lsu{h}{\h},$$
where
\begin{align}
\Lgiu{h}{\h}&=\Span\{W^\H_{i_1}\wedge\cdots\wedge W^\H_{i_h}\,|\,1\leq i_1<\cdots<i_h\leq 2n+1\}, \nonumber \\
\Lsu{h}{\h}&=\Span\{\omega^\H_{i_1}\wedge\cdots\wedge \omega^\H_{i_h}\,|\,1\leq i_1<\cdots<i_h\leq 2n+1\}, \nonumber
\end{align}
are said \textit{$h$-vectors} and \textit{$h$-covectors}, respectively. We denote by $\Theta_h$ and $\Theta^h$ their bases, for every $h=0,\ldots,2n+1$. Since $\h$ is a finite dimensional space, $\h$ can be endowed with an inner product $\langle\cdot,\cdot\rangle$, which we extend canonically to $\Lgiu{h}{\h}$ and $\Lsu{h}{\h}$, such that $\Theta_h$ and $\Theta^h$ are orthonormal bases.

The Hodge isomorphisms can be written as
$$\ast:\Lgiu{h}{\h}\stackrel{\sim}{\longrightarrow}\Lgiu{2n+1-h}{\h}\quad\text{and}\quad\ast:\Lsu{h}{\h}\stackrel{\sim}{\longrightarrow}\Lsu{2n+1-h}{\h},$$
and
\begin{align}
v&\wedge\ast w=\langle v,w\rangle\,W^\H_1\wedge\cdots\wedge W^\H_{2n+1},\quad\forall\,v,w\in\Lgiu{h}{\h}, \nonumber \\
\varphi&\wedge\ast \psi=\langle\varphi,\psi\rangle\,\omega^\H_1\wedge\cdots\wedge \omega^\H_{2n+1},\quad\forall\,\varphi,\psi\in\Lsu{h}{\h}.\nonumber
\end{align}

The  notion of left-invariance of a form is recalled in the next definition.
\begin{defi}\label{defi.leftinvform}
A $h$-form $\alpha$ on $\H^n$ is said to be left-invariant if
\begin{equation*}
\tau_q^\#\alpha=\alpha,\quad\text{for every $q\in\H^n$.},
\end{equation*}
where $\tau_q^\#\alpha$ is the pullback of $\alpha$ through the left translation $\tau_q$.
\end{defi}


The \textit{symplectic $2$-form} $\d \theta^\H$
is given by
$$\d \theta^\H=-\sum_{i=1}^n\,\omega_i^\H\wedge\omega_{i+n}^\H.$$
Thus, $-\d\theta^\H$ induces a symplectic structure on $\h_1$, and  $\{W_1^\H,\ldots,W_{2n}^\H\}$ is a symplectic basis of $\ker\theta^\H$.

\subsection{Rumin complex and Laplacians in $\H^n$}\label{subsec:RumcomplHn}
Let $0\le h\le 2n+1$. We denote by $\Omega^h(\H^n)$ the space of all smooth $h$-forms on $\H^n$ as the smooth sections of $\Lsu{h}{\h}$ (since it can be identified with the tangent space to $\H^n$ at the origin $e$ but also as a fiber bundle over $\H^n$ by left translations).

In the setting of Carnot groups, the usual de Rham differential operator $\d$ is, in general, non-isotropic with respect to the group dilations.
For example, in the case of $\H^n$, we have anisotropic dilations which split the Heisenberg algebra $\h$ in two eigenspaces with respect to the eigenvalues $\lambda$ and $\lambda^2$. Therefore, the de Rham differential $\d$ lacks on homogeneity and the de Rham complex lacks on scale invariance under these dilations.
In order to overcome this problem,  Rumin in \cite{rumin_jdg} considered a subcomplex of the de Rham complex that gives rise to a new differential, now commonly denoted by $\d_C$, that respects the stratification of the Lie algebra of a Carnot group, so that it is more suitable for studying differential forms on these anisotropic situations.

For the purposes of this paper, it is sufficient to recall only a few aspects of the Rumin complex and the related Rumin differential $\d_C$ on $\H^n$. Thus, we refer to \cite{rumin_jdg} and  \cite{rumin_grenoble} for a more detailed presentation (see also e.g. \cite{BFP2}).

For $h=0,\ldots,2n+1$, Rumin considers suitable subbundle of $\Lsu{h}{\h}$, denoted by $E_0^h$ and  the space of the smooth sections of $E_0^h$, which is still denoted by $E_0^h$, are said Rumin's $h$-forms. We denote by $\Xi^h_0\subseteq\Theta^h$ a basis of $E_0^h$.

We refer to  \cite{rumin_grenoble} for the detailed construction of the exterior differential $\d^\H_C:E_0^h\to E_0^{h+1}$ that makes $(E_0^\bullet,\d_C^\H)$ a complex that is homotopically equivalent to the de Rham's one. We limit ourselves to recall that $\d_C^\H$ is a left-invariant, homogeneous operator with respect to group dilations. Moreover, it is a first-order differential operator in the horizontal derivatives in degrees $h\neq n$ and it is a second-order differential horizontal operator in degree $n$. An explicit construction of $\d_C^\H$ in $\H^1$ and $\H^2$ is given in \cite{BF7}.

\begin{oss}
If $U\subseteq\H^n$ is an open set, $0\leq h\leq 2n+1$, $1\leq p\leq \infty$ and $m\in\N$, we denote by $W^{m,p}(U,E_0^h), C_0^\infty(U,E_0^h), C^\infty(U,E_0^h)$ the space of all the sections of $E_0^h$ such that their components with respect to a given left-invariant frame belong to the corresponding scalar spaces.
\end{oss}

We can extend the Theorem \ref{teo.compactembHn} to differential forms, obtaining the following compact embeddings for any $h$,
\begin{equation}\label{teo.compactembHnforms}W^{2,p}(B(e,1),E_0^h)\hookrightarrow W^{1,p}(B(e,1),E_0^h)\hookrightarrow L^p(B(e,1),E_0^h).\end{equation}
Thus,  if $\alpha\in W^{2,p}(B(e,1),E_0^h)$, for every $\e>0$ there exists $c_\e$ such that
$$\|\alpha\|_{W^{1,p}(B(e,1),E_0^h)}\leq \e\|\alpha\|_{W^{2,p}(B(e,1),E_0^h)}+c_\e\|\alpha\|_{L^p(B(e,1),E_0^h)}.$$





We denote by $\delta_C^\H$ the formal adjoint operator of the Rumin $\d^\H_C$ in $L^2(\H^n,E_0^h)$,
we have that $\delta^\H_C=(-1)^{(2n+1)h+1}\ast \d^\H_C \ast=(-1)^{h}\ast \d^\H_C \ast$ on $E_0^{h}$ (see e.g. \cite{rumin_jdg} p. 288).
We remind the reader that $\delta_C^\H$
is a left-invariant homogeneous differential operator in the horizontal variables, of order $1$ if $h\neq n+1$ and of order $2$ if $h=n+1$.

The  Laplacians defined by Rumin in $\H^n$ (\cite{rumin_jdg}) are defined as follows.
%
%
\begin{defi}\label{defi.RuminLapHn}
We define the {Rumin's Laplacians} $\Delta_{\H,h}$ on $E_0^h$ by setting
\begin{equation}\label{eq.LapRumHn}
\Delta_{\H,h}:=\left\{
\begin{array}{ll}
\d^\H_C\delta^\H_C+\delta^\H_C\d^\H_C,\quad&\text{if $h\neq n,n+1$}, \\
(\d^\H_C\delta^\H_C)^2+\delta^\H_C\d^\H_C,\quad&\text{if $h=n$} \\
\d^\H_C\delta^\H_C+(\delta^\H_C\d^\H_C)^2,\quad&\text{if $h=n+1$}
\end{array}
\right. 	
\end{equation}
\end{defi}

\begin{oss}\label{oss.orderLapRumHn}
 $\Delta_{\H,h}$ is a left-invariant homogeneous differential operator of order $2$ if $h\neq n,n+1$ and of order $4$ if $h=n,n+1$ with respect to group dilations.  Notice also that for $h=0$, $\Delta_0=-\sum_{i=1}^{2n} \left(W_i^\H\right)^2$ is the usual sub-Laplacian on functions.
Moreover, $\Delta_{\H,h}$ is \textit{self-adjoint}, for every $h$, and commutes with the Hodge operator, i.e.\,
$$\ast\Delta_{\H,h}=\Delta_{\H,2n+1-h}\ast.$$
\end{oss}

Since we fixed a basis $\Xi_0^h$ of $E_0^h$, for every $h$, we identify the set $\Distr'(\H^n,E_0^h)$ with $\Distr'(\H^n,\R^{N_h})$, where $N_h:=\dim\, E_0^h$. Hence,
 we can think to $\Delta_{\H,h}$ as a matrix-valued map
$$\Delta_{\H,h}=(\Delta_{\H,h}^{i j})_{i,j=1,\ldots,N_h}:\Distr'(\H^n,\R^{N_h})\to\Distr'(\H^n,\R^{N_h}).$$

In \cite{rumin_jdg}, it is proved that the Laplacians are hypoelliptic and {maximal hypoelliptic} operators (see Corollary p.$290$ or Theorem $3.1$ in \cite{rumin_gafa}) in the sense of \cite{HN} (see also \cite{ponge} for related argument).
 In
general, 
if $\mc L$ is a differential operator  on
$\mc D'(\he{n},\rn {N_h})$, then $\mc L$ is said hypoelliptic if 
for any open set $\mc V\subset \he{n}$ 
where $\mc L\alpha$ is smooth, then $\alpha$ is smooth in $\mc V$.
In addition, if $\mc L$ is
homogeneous of degree $a\in\mathbb N$,
we say that $\mc L$ is maximal hypoelliptic if
 for any  $\delta>0$ there exists $C=C(\delta)>0$ such that for any
homogeneous
polynomial $P$ in $W_1,\dots,W_{2n}$ of degree $a$
we have
$$
\|P\alpha\|_{L^{ 2}(\he n, \rn{N_h})}\le C
\left(
\|\mc L\alpha\|_{L^{ 2}(\he n, \rn{N_h})}+\|\alpha\|_{L^{ 2}(\he n, \rn{N_h})}
\right)
$$
for any $\alpha\in \mc C_0^\infty(B (e,\delta),\rn {N_h})$. 

In Theorem 4.1 in \cite{BFT3},   it is proved for $\Delta_{\H, h}$ that, for every $1<p<\infty$ and for any  $\delta>0$ there exists $C=C(\delta,p)>0$ such that 
\begin{equation}\label{eq:lp-hypo}
\|\alpha\|_{W^{2\ell,p}(\H^n,\rn {N_h})}\leq C\left(\|\Delta_{\H,h}\alpha\|_{L^p(\H^n,\rn {N_h})}+\|\alpha\|_{L^p(\H^n,\rn {N_h})}\right),\end{equation}
for any $\alpha\in C_0^\infty(B(e,\delta),\rn {N_h})$,
where $\ell=2$ if $h=n,n+1$ and $\ell=1$ if $h\neq n,n+1$.

%
%

\subsection{Rumin complex in contact manifolds}\label{subsec:Rumcomplcontman}
As mentioned above, the Rumin complex has been introduced on arbitrary contact manifolds. Here, we briefly recall only a few properties of the Rumin complex, and for a detailed construction, we refer again to \cite{rumin_jdg} (see also \cite{BFP2}, Section 2.3).

Let us start with the following definition.
\begin{defi}[Contactomorphism]\label{defi.contactomorphismm}
Let $(M_1,H_1)$ and $(M_2,H_2)$ be contact manifolds with contact forms given by $\theta^{M_1},\theta^{M_2}$, respectively. If $U_1\subseteq M_1,U_2\subseteq M_2$ are open sets and $f:U_1\to U_2$ is a diffeomorphism, then we say that $f$ is a \textit{contactomorphism} if there exists a non-vanishing function $g \in C^\infty(U_1,\R)$ such that
$$f^\#\theta^{M_2}=g\,\theta^{M_1}\quad\text{in $U_1$}.$$
\end{defi}
By Darboux theorem, 
any contact manifold of dimension $(2n+1)$ is locally contactomorphic to the Heisenberg group $\H^n$ (see, e.g., Theorem 5.1.5 in \cite{abraham_marsden}).

Let us denote by $(E_0^\bullet,\d_C^M)$ the Rumin complex on contact manifolds. Thus, we have that
\begin{itemize}
\item $(E_0^\bullet,\d_C^M)$ is homotopically equivalent to the de Rham complex $(\Omega^\bullet,\d)$;
\item $\d_C^M:E_0^h\to E_0^{h+1}$ is a differential operator of order $1$ if $h\neq n$, whereas it is a differential operator of order $2$ if $h=n$;
\item if $U\subseteq\H^n$ is an open set and $\phi:U\to M$ is a contactomorphism, denoting $V:=\phi(U)$, we have
\begin{enumerate}
\item $\phi^\# E_0^\bullet (V)= E_0^\bullet (U)$
\item $\d_C^\H\phi^\#=\phi^\#\d_C^M$.
\end{enumerate}
\end{itemize}

\subsection{Sobolev spaces on contact sub-Riemannian manifolds with bounded geometry and Sobolev-type inequalities}\label{subsec:Sobspconmangeom}
In  \cite{BFP3}, \cite{BFP2} and \cite{BTT}, the definition of Sobolev spaces has been considered in the class of sub-Riemannian contact manifolds with bounded geometry. Manifolds with bounded geometry
generalize the concept of compact manifolds and covering of compact manifolds. Some of the results recalled in this section hold in this more general context. 

In the sequel, we denote by $d_\H(\cdot,\cdot)$ the Korányi distance in $\H^n$ and by $B_\H(\cdot,\cdot)$ the associated Korányi ball.
\begin{defi}\label{defi.subRiemcontmanwboundgeom}
Let $(M,H,g^M)$ be a complete sub-Riemannian contact manifold and $k\in\N$. We say that $M$ has \textit{bounded $C^k$-geometry} if there exist $C_M, r>0$ such that, for every $x\in M$, there exists a contactomorphism $\phi_x:B_\H(e,1)\to M$ that satisfies
\begin{enumerate}
\item $B_M(x,r)\subseteq \phi_x(B_\H(e,1))$;
\item $\phi_x$ is a bi-Lipschitz map with constant $C_M$, i.e.\,
$$\frac{1}{C_M}d_\H(p,q)\leq d_M(\phi_x(p),\phi_x(q))\leq C_M d_\H(p,q),\quad\quad\text{for every $p,q\in B_\H(e,1)$};$$
\item for every $x,y\in M$, the transition maps $\phi_y^{-1}\circ\phi_x$ and their first $k$ derivatives with respect to left-invariant horizontal vector fields are bounded by $C_M$.
\end{enumerate}
\end{defi}

\begin{lemma}[see Lemma 4.11 in \cite{BFP2}]\label{lemma.coveringl}
Let $(M,H,g^M)$ be a sub-Riemannian contact manifold with bounded $C^k$-geometry, $k\in\N$. With the same notations as in Definition \ref{defi.subRiemcontmanwboundgeom}, there exists $\rho=\rho_r>0$ and an at most countable covering $\{B_M(x_j,\rho)\}_j$ of $M$ such that
\begin{enumerate}
\item each ball $B_M(x_j,\rho)$ is contained in the image of one of the contact charts of Definition \ref{defi.subRiemcontmanwboundgeom};
\item $B_M(x_j,\rho/5)\cap B_M(x_i,\rho/5)=\emptyset$, if $i\neq j$;
\item the covering is uniformly locally finite and there exists $N=N_M\in\N$ such that for each ball $B_M(x,\rho)$,
$$\text{card}\,\{m\in\N\,\,|\,\,B_M(x_m,\rho)\cap B_M(x,\rho)\neq\emptyset\}\leq N.$$
In addition, if $B_M(x_m,\rho)\cap B_M(x,\rho)\neq\emptyset$, then $B_M(x_m,\rho)\subseteq B_M(x,r).$
\end{enumerate}
\end{lemma}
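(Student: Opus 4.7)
The plan is to run a standard Vitali-type covering argument, transferring the required volume estimates from $M$ to $\H^n$ via the bounded-geometry charts.

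First I would fix $\rho := r/3$, a value small enough to guarantee both the inclusion $B_M(x_j,\rho)\subseteq B_M(x_j,r)\subseteq\phi_{x_j}(B_\H(e,1))$ required by item (1) (using the chart centred at $x_j$), and the inclusion $B_M(x_m,\rho)\subseteq B_M(x,r)$ at the end of item (3): indeed if $B_M(x_m,\rho)\cap B_M(x,\rho)\neq\emptyset$ then $d_M(x_m,x)<2\rho$, and for every $y\in B_M(x_m,\rho)$ one gets $d_M(y,x)<3\rho=r$. Next, I would invoke Zorn's lemma to produce a maximal $\rho$-separated subset $\{x_j\}_j\subseteq M$, i.e.\ a maximal collection satisfying $d_M(x_i,x_j)\geq\rho$ for $i\neq j$. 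Maximality forces, for every $y\in M$, the existence of some $j$ with $d_M(y,x_j)<\rho$, whence the balls $\{B_M(x_j,\rho)\}_j$ cover $M$. Since $d_M(x_i,x_j)\geq\rho>2\rho/5$, the balls $B_M(x_j,\rho/5)$ are pairwise disjoint, giving (2). Countability of the index set follows from the second countability of $M$ as a smooth manifold, as the family of pairwise disjoint non-empty open balls $B_M(x_j,\rho/5)$ is then at most countable.

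The substantive content of the lemma is the uniform local finiteness. Fix $x\in M$ and set $I(x):=\{m\in\N : B_M(x_m,\rho)\cap B_M(x,\rho)\neq\emptyset\}$. For each $m\in I(x)$ one has $d_M(x_m,x)<2\rho<r$, so $x_m\in\phi_x(B_\H(e,1))$ and we may write $x_m=\phi_x(y_m)$ with $y_m\in B_\H(e,1)$. The bi-Lipschitz bound on $\phi_x$ yields
$$d_\H(y_m,y_{m'})\geq \frac{1}{C_M}\,d_M(x_m,x_{m'})\geq \frac{\rho}{C_M}\qquad\text{for }m\neq m',$$
so the Kor\'anyi balls $B_\H(y_m,\rho/(2C_M))$ are pairwise disjoint. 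Likewise, writing $\tilde x:=\phi_x^{-1}(x)$, one has $d_\H(y_m,\tilde x)\leq C_M\,d_M(x_m,x)<2C_M\rho$, so each of the above balls is contained in the single Kor\'anyi ball $B_\H(\tilde x,\,2C_M\rho+\rho/(2C_M))$. The $\delta_\lambda$-homogeneity of Lebesgue measure on $\H^n$, namely $|B_\H(\cdot,s)|=s^{Q}|B_\H(e,1)|$ with $Q=2n+2$, then produces the bound
$$\#\,I(x)\;\leq\;\left(\frac{2C_M\rho+\rho/(2C_M)}{\rho/(2C_M)}\right)^{Q}\;=\;(4C_M^2+1)^Q\;=:\;N,$$
independent of $x$ and $\rho$.

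I expect the main (modest) obstacle to be this uniform local finiteness: one must realise that the packing count cannot be performed directly on $M$ but has to be transported into the Heisenberg model, and this forces the simultaneous use of the bi-Lipschitz charts from Definition \ref{defi.subRiemcontmanwboundgeom} and the exact scaling of Haar measure under the anisotropic dilations $\delta_\lambda$. The rest is pure Zorn's lemma plus triangle inequality bookkeeping, with the choice $\rho=r/3$ tuned once and for all to make (1)--(3) fall into place simultaneously.
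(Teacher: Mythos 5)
Your argument is correct and self-contained: the paper itself offers no proof of this lemma, quoting it directly from Lemma 4.11 of \cite{BFP2}, so there is nothing internal to compare against, but the route you take (maximal $\rho$-separated set via Zorn's lemma, disjointness of the $\rho/5$-balls, countability from second countability, and the packing bound transported to $\he{n}$ through the bi-Lipschitz charts and the $\delta_\lambda$-scaling of Haar measure) is the standard one and all the constants check out, with $\rho=r/3$ making items (1) and (3) hold simultaneously. The only point worth flagging is that the disjointness of the balls $B_\H(y_m,\rho/(2C_M))$ uses the genuine triangle inequality for the Cygan--Kor\'anyi gauge, which is a classical but nontrivial fact that you rely on implicitly.
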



Following \cite{BFP2}, we now recall the definition of Sobolev spaces on $M$.
\begin{defi}\label{defi.Sobolevcontmanbg}
Let $(M,H,g^M)$ be a sub-Riemannian contact manifold with $C^k$-bounded geometry, $k\in\N$, and let $\{\chi_j\}_{j\in\N}$ be a partition of the unity subordinated to the atlas $\{B_M(x_j,\rho),\phi_{x_j}\}_{j\in\N}$ of Lemma \ref{lemma.coveringl}. If $\alpha$ is a section of $E_0^h$ on $M$, we say that $\alpha\in W^{m,p}(M,E_0^\bullet)$, for $m=0,\ldots,k-1$ and $p\geq 1$, if
$$\phi^\#_{x_j}(\chi_j\alpha)\in W^{m,p}(\H^n,E_0^\bullet),\quad\text{for every $j\in\N$}$$
and
\begin{equation}\label{eq.normMsobolev}
\|\alpha\|_{W^{m,p}(M,E_0^\bullet)}:=\left(\sum_{j\in\N}\|\phi^\#_{x_j}(\chi_j\alpha)\|^p_{W^{m,p}(\H^n,E_0^\bullet)}\right)^{\frac{1}{p}}< +\infty.
\end{equation}
We stress the fact that $\phi_{x_j}^{-1}(\mathrm{supp}\,\chi_j)\subseteq B_\H(e,1)$, thus $\phi^\#_{x_j}(\chi_j\alpha)$ is compactly supported in $B_\H(e,1)$ and it can be continued by zero on all $\H^n$. For sake of simplicity let us set
$$
\phi_j:=\phi_{x_j}\,.
$$
\end{defi}

From now on, we will consider contact manifolds with $C^k$-bounded geometry with $k\in\N$ sufficiently big enough so that every Sobolev spaces  that we consider, make sense as in Definition \ref{defi.Sobolevcontmanbg}.

One can prove (see \cite{BFP2}, Proposition $4.13$) that different choices of the covering and charts of $M$ give rise to equivalent norms of \eqref{eq.normMsobolev}, thus the definition of $W^{m,p}(M,E_0^\bullet)$ does not depend on the atlas $\{B_M(x_j,\rho),\phi_{x_j}\}_{j\in\N}$.

%

As proved in \cite{BTT}, Remark 3.4, the norm of $W^{0,p}(M,E_0^\bullet)$ is equivalent to the norm of $L^{p}(M,E_0^\bullet)$ associated to the volume form $\d V:=\theta^M\wedge(\d\theta^M)^n$. 



If $\alpha\in W^{m,p}(M,E_0^h)$ is supported in $\phi_j\big(B_\H(e,\eta/C_M)\big)$, then the norms
$$\|\alpha\|_{W^{m,p}(M,E_0^h)}\quad\text{and}\quad\|\phi_j^\#\alpha\|_{W^{m,p}(\H^n,E_0^h)}$$
are equivalent, with constants independent on $j$.
Moreover,  we have the following result.
\begin{teo}[see Theorem 5.4 in \cite{BTT}]\label{teo.GaffIneqcontman}
Let $(M,H,g^M)$ be a sub-Riemannian contact manifold with $C^k$-bounded geometry and without boundary, $k\ge 3$. Let $1\leq h\leq 2n$, $1< p<\infty$. Then, there exists a geometric constant $C>0$ such that, 
\begin{itemize}
\item[i)]  for every $\alpha\in W^{1,p}(M, E_0^h)$, with $h\neq n, n+1$, we have
\begin{equation}\label{eq.GaffIneqh}
\|\alpha\|_{W^{1,p}(M,E_0^h)}\leq C\Big(\|\d_C^M\alpha\|_{L^{p}(M,E_0^{h+1})}+\|\delta^M_C\alpha\|_{L^{p}(M,E_0^{h-1})}+\|\alpha\|_{L^p(M,E_0^h)}\Big);
\end{equation}
\item[ii)]  for every $\alpha\in W^{2,p}(M, E_0^n)$, we have
\begin{equation}\label{eq.GaffIneqn}
\|\alpha\|_{W^{2,p}(M,E_0^n)}\leq C\Big(\|\d_C^M\alpha\|_{L^{p}(M,E_0^{n+1})}+\|\d^M_C\delta^M_C\alpha\|_{L^{p}(M,E_0^{n})}+\|\alpha\|_{L^p(M,E_0^n)}\Big);
\end{equation}
\item[iii)]  for every $\alpha\in W^{2,p}(M, E_0^{n+1})$, we have
\begin{equation}\label{eq.GaffIneqnp1}
\|\alpha\|_{W^{2,p}(M,E_0^{n+1})}\leq C\Big(\|\delta^M_C\d_C^M\alpha\|_{L^{p}(M,E_0^{n+1})}+\|\delta^M_C\alpha\|_{L^{p}(M,E_0^{n})}+\|\alpha\|_{L^p(M,E_0^{n+1})}\Big).
\end{equation}
\end{itemize}
\end{teo}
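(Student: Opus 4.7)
The plan is to localize on the bounded-geometry atlas, reduce to an analogous Gaffney inequality on the Heisenberg group $\H^n$, and exploit the $L^p$-maximal hypoellipticity \eqref{eq:lp-hypo} of the Rumin Laplacian $\Delta_{\H,h}$. First I fix a partition of unity $\{\chi_j\}$ subordinate to the covering of Lemma \ref{lemma.coveringl} and set $\beta_j := \phi_j^\#(\chi_j\alpha)$, which is compactly supported in $B_\H(e,1)$. By Definition \ref{defi.Sobolevcontmanbg}, the target inequality reduces to bounding $\sum_j \|\beta_j\|_{W^{m,p}(\H^n,E_0^h)}^p$ by the right-hand side. Since $\phi_j$ is a contactomorphism, pull-back commutes with $\d_C$ and intertwines $\delta_C^M$ with $\delta_C^\H$ modulo lower-order operators whose coefficients are uniformly controlled by $C^k$-bounded geometry.

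The central local step is the Heisenberg Gaffney estimate: for $\beta \in C_0^\infty(B_\H(e,1), E_0^h)$,
\[
\|\beta\|_{W^{\ell,p}(\H^n,E_0^h)} \lesssim \|D\beta\|_{L^p(\H^n)} + \|\beta\|_{L^p(\H^n,E_0^h)},
\]
with $\ell=1$ if $h\ne n, n+1$ and $\ell=2$ otherwise, where $D$ collects the differential combinations of $\d_C^\H, \delta_C^\H$ appearing on the right-hand side of (i)--(iii). A direct computation using $\d_C^2=0$, $\delta_C^2=0$, and $(\d_C\delta_C)^* = \d_C\delta_C$ shows that in each case $D^*D = \Delta_{\H,h}$: namely $D=(\d_C^\H,\delta_C^\H)$ for $h\ne n, n+1$, $D=(\d_C^\H, \d_C^\H\delta_C^\H)$ for $h=n$, and $D=(\delta_C^\H\d_C^\H, \delta_C^\H)$ for $h=n+1$. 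Hence \eqref{eq:lp-hypo} yields $\|\beta\|_{W^{2\ell,p}} \lesssim \|D^*D\beta\|_{L^p} + \|\beta\|_{L^p}$; the ``half-order'' upgrade to $\|\beta\|_{W^{\ell,p}} \lesssim \|D\beta\|_{L^p} + \|\beta\|_{L^p}$ should then follow from $L^p$-boundedness of the Rumin--Riesz operators $D(\Delta_{\H,h}+I)^{-\ell/2}$, obtainable through Calder\'on--Zygmund theory of homogeneous convolution kernels associated to maximal hypoelliptic Rockland-type operators on $\H^n$.

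Applying this local estimate to each $\beta_j$ and summing over $j$, the commutator terms $[\chi_j, \d_C^M]$, $[\chi_j, \delta_C^M]$ and their compositions are differential operators of strictly lower order with coefficients uniformly bounded in $j$ by bounded geometry. The resulting error terms are absorbed into the left-hand side via Ehrling's inequality \eqref{teo.compactembHnforms}, and the locally finite property in Lemma \ref{lemma.coveringl} keeps the overlap constant uniform, producing the global estimates (i)--(iii).

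The main obstacle is the ``square-root extraction'' inside the central step: passing from the full-order $L^p$-hypoellipticity of $\Delta_{\H,h} = D^*D$ to the sharp $L^p$ estimate on $D$ alone. For $p=2$ this is the one-line identity $\|D\beta\|_2^2 = \langle \Delta_{\H,h}\beta, \beta\rangle$ followed by an $L^2$ hypoelliptic bound, but for general $1 < p < \infty$ it is a genuine singular-integral statement requiring the fine $L^p$-mapping properties of the Rumin--Riesz transforms, which is precisely the content of \cite{BTT}, Theorem 5.4 that we would invoke.
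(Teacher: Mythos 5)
First, note that the paper does not prove this statement at all: Theorem \ref{teo.GaffIneqcontman} is imported verbatim from \cite{BTT}, Theorem 5.4, and the only related material in the paper is the recollection of the local Heisenberg-group Gaffney inequality in Appendix B (Lemma \ref{gaffney-Hn-qui}, itself quoted from \cite{BF7}) together with the patching machinery of Proposition \ref{prop.notcommdeltapull} and Remark \ref{oss.stimecommdeltadC}. Your overall architecture --- localize on the bounded-geometry atlas, prove a Gaffney estimate on $\H^n$, glue --- is indeed the strategy behind the cited result, and it mirrors what the paper does explicitly for the closely related Theorem \ref{teo.globalmaxhypocont}.

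However, there is a genuine gap, and you name it yourself: the central local step. Your route from the maximal hypoellipticity estimate \eqref{eq:lp-hypo}, i.e. $\|\beta\|_{W^{2\ell,p}}\lesssim\|D^*D\beta\|_{L^p}+\|\beta\|_{L^p}$, to the ``half-order'' estimate $\|\beta\|_{W^{\ell,p}}\lesssim\|D\beta\|_{L^p}+\|\beta\|_{L^p}$ is only gestured at via $L^p$-boundedness of operators of the form $D(\Delta_{\H,h}+I)^{-\ell/2}$; for $p\neq 2$ this is a nontrivial singular-integral statement (it is the content of \cite{BF7}, Remark 5.3, recalled here as Lemma \ref{gaffney-Hn-qui}), and your closing sentence --- that this step ``is precisely the content of \cite{BTT}, Theorem 5.4 that we would invoke'' --- makes the argument circular, since that is the theorem you are proving. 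A second, more technical inaccuracy: you describe the failure of pull-back to intertwine $\delta_C^M$ with $\delta_C^\H$ as a perturbation by ``lower-order operators.'' According to Proposition \ref{prop.notcommdeltapull}, the commutator $\mathcal P=\delta_C^\H\psi_a^\#-\psi_a^\#\delta_C^M$ is \emph{not} of lower order: it is controlled by $\e\,\|\psi_a^\#\alpha\|_{W^{m+1,p}}$ (respectively $W^{m+2,p}$ in degree $n+1$), i.e. by the \emph{full-order} norm with a small constant, and it is absorbed into the left-hand side only after shrinking the chart radius $\eta$. Treating it as lower-order and absorbing it by Ehrling's inequality, as you propose, would not close the estimate. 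To repair the proposal you would need to (a) take the Heisenberg Gaffney inequality of Lemma \ref{gaffney-Hn-qui} as the external input in place of \eqref{eq:lp-hypo}, and (b) run the absorption scheme of the proof of Theorem \ref{teo.globalmaxhypocont}, fixing $\e$ first and then $\eta$ and the Ehrling parameter.
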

As we will see in the sequel, this inequality plays a crucial role in establishing the Hodge decompositions for manifolds presented in Sections \ref{HodgeL2} and \ref{sec:HodgedecLPformss}.

%

\subsubsection{Sobolev embeddings on contact manifolds}\label{sec:Sobembconman}

Let $(M,H,g^M)$ be a sub-Riemannian contact manifold with bounded geometry. An approximate smoothing homotopy formula has been described and proved in \cite{BFP2} (see Theorem $1.5$ in \cite{BFP2}).

\begin{teo}[Theorem 1.5 in \cite{BFP2}]\label{teo.Homformcontman}
Let $(M,H,g^M)$ be a $(2n+1)$-dimensional sub-Riemannian contact manifold with $C^k$-bounded geometry with $k\geq 3$. Let $1<p\leq q<\infty$ that satisfy the condition 
\begin{equation}\label{eq.assumIhpqn}
\frac{1}{p}-\frac{1}{q}\leq\left\{
\begin{array}{l}
\frac{1}{2n+2},\quad\text{if $h\neq n+1$}, \\
\frac{2}{2n+2},\quad\text{if $h= n+1$}.
\end{array}\right.
\end{equation}
Then, for $1\leq h\leq 2n+1$, there exist operators $S_M$ and $T_M$ on $h$-forms on $M$ such that
 are bounded from $W^{m,p}_M$ to $W^{m,q}_M$, for every $0\leq m\leq k-1$ and such that
$$Id=\d_C^M T_M+T_M\d_C^M+S_M.$$
Furthermore,  $S_M\d_C^M=\d_C^M S_M$ and $S_M$ and $T_M$ are bounded from $W^{m-1,p}_M$ to $W^{m,p}_M$ if $m\geq 1$ and $h\neq n+1$, and they are bounded from $W^{m-2,p}_M$ to $W^{m,p}_M$ if $m\geq 2$ and $h=n+1$.
(See formulae $(64), (65)$ in \cite{BFP2} for an explicit definition of $S_M$ and $T_M$).
\end{teo}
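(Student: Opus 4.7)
The plan is to first construct an exact homotopy formula $\mathrm{Id}=\d_C^\H T_\H+T_\H\d_C^\H$ on the model Heisenberg group $\H^n$, and then transfer it to $M$ by gluing via the bounded geometry atlas $\{(\phi_j,B_M(x_j,\rho))\}_j$ of Lemma \ref{lemma.coveringl} together with a subordinate partition of unity $\{\chi_j\}_j$. The operator $S_M$ will appear automatically, collecting the commutators of $\d_C^M$ and $T_\H$ with the cutoffs used in the gluing, and consequently it will be a smoothing (rather than merely bounded) correction.

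On $\H^n$ the Rumin Laplacian $\Delta_{\H,h}$ is left-invariant, self-adjoint, and maximal hypoelliptic, of homogeneous degree $2$ for $h\neq n,n+1$ and degree $4$ for $h=n,n+1$; by results of Folland and Rumin it admits a fundamental solution $G_\H$ given by convolution with a kernel smooth off the origin and $\delta_\lambda$-homogeneous of order $2-Q$ (respectively $4-Q$). Moreover, whenever $\Delta_{\H,h}$ and $\Delta_{\H,h+1}$ have the same order one has the intertwining $\Delta_{\H,h+1}\d_C^\H=\d_C^\H\Delta_{\H,h}$, and hence $G_\H\d_C^\H=\d_C^\H G_\H$ on the appropriate domains. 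Inverting $\Delta_{\H,h}$ via the explicit expression \eqref{eq.LapRumHn} and using these intertwinings, a direct algebraic manipulation produces operators $T_\H$ of degree $-1$ satisfying $\mathrm{Id}=\d_C^\H T_\H+T_\H\d_C^\H$; concretely one has $T_\H=\delta_C^\H G_\H$ for $h\neq n,n+1$, and in the middle degrees one splits off the $(\d_C^\H\delta_C^\H)$-part of the Laplacian and obtains $T_\H=\delta_C^\H\d_C^\H\delta_C^\H G_\H$ on $E_0^n$ and $T_\H=\delta_C^\H G_\H$ on $E_0^{n+1}$. The resulting kernel has $\delta_\lambda$-homogeneity $1-Q$ in all cases except for $h=n+1$, where it is $2-Q$ (because $\delta_C^\H$ is a second-order operator on $E_0^{n+1}$): this is precisely the source of the factor of $2$ in the Sobolev gap condition \eqref{eq.assumIhpqn}.

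To transfer to $M$, fix cutoffs $\zeta_j\in C_0^\infty(B_\H(e,1))$ equal to $1$ on $\phi_j^{-1}(\mathrm{supp}\,\chi_j)$ and define
$$
T_M\alpha:=\sum_{j}(\phi_j)_{\ast}\bigl(\zeta_j\,T_\H(\phi_j^{\#}(\chi_j\alpha))\bigr),\qquad S_M:=\mathrm{Id}-\d_C^M T_M-T_M\d_C^M.
$$
Since $\phi_j$ is a contactomorphism we have $\phi_j^{\#}\d_C^M=\d_C^\H\phi_j^{\#}$, so on each chart the Heisenberg identity supplies the bulk of $\mathrm{Id}$; the surviving terms are the commutators $[\d_C^\H,\zeta_j]T_\H$ and $[T_\H,\zeta_j]\d_C^\H$, which are smoothing operators because $\nabla\zeta_j$ is supported off the diagonal of the kernel of $T_\H$. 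Uniform bounds on the transition maps (Definition \ref{defi.subRiemcontmanwboundgeom}) and the uniformly locally finite property of the atlas (Lemma \ref{lemma.coveringl}) ensure that the sum over $j$ converges in every $W^{m,p}$ operator norm. The $L^p\to L^q$ and $W^{m,p}\to W^{m,q}$ estimates come from the Folland--Stein fractional integration inequality on $\H^n$ applied to convolution with a kernel of homogeneity $1-Q$ or $2-Q$, which is exactly what \eqref{eq.assumIhpqn} allows; the ``derivative-gaining'' bounds $W^{m-1,p}\to W^{m,p}$ (respectively $W^{m-2,p}\to W^{m,p}$ for $h=n+1$) follow because horizontal differentiation of such a kernel only raises its homogeneity by one, so it remains locally integrable. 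The identity $S_M\d_C^M=\d_C^M S_M$ is then a direct consequence of composing $\mathrm{Id}=\d_C^M T_M+T_M\d_C^M+S_M$ with $\d_C^M$ on either side and using $(\d_C^M)^2=0$. The principal obstacle is the construction of $T_\H$ in the middle degrees $h=n,n+1$: one must decompose $E_0^h$ according to the images of $\d_C^\H$ and $\delta_C^\H$ before inverting the fourth-order Laplacian, and keep careful track of the extra order of $\delta_C^\H$ at degree $n+1$, which is what forces the factor $2/(2n+2)$ in \eqref{eq.assumIhpqn} and the weaker $W^{m-2,p}\to W^{m,p}$ smoothing at this degree.
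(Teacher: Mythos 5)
This statement is not proved in the paper at all: it is imported verbatim as Theorem 1.5 of \cite{BFP2}, with a pointer to formulae (64)--(65) there for the definitions of $S_M$ and $T_M$. So there is no internal proof to compare against; what you have written is a reconstruction of the argument of \cite{BFP2}, and in outline it is the right one (exact homotopy on $\H^n$ from the inverse of the Rumin Laplacian, with $T_\H$ a kernel of type $1$, or type $2$ in degree $n+1$ because $\delta_C^\H$ has order $2$ there; then gluing on $M$ through the bounded-geometry atlas, with $S_M$ absorbing the localization errors). Your accounting of the homogeneities, of the source of the factor $2/(2n+2)$ at $h=n+1$, and the derivation of $S_M\d_C^M=\d_C^MS_M$ from $(\d_C^M)^2=0$ are all correct.

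Two points are glossed over or overstated. First, $S_M$ is \emph{not} a smoothing operator with your definition: besides the terms $[\d_C^\H,\zeta_j]T_\H$, whose kernels are indeed smooth because $\mathrm{supp}\,\nabla\zeta_j$ is at positive distance from $\phi_j^{-1}(\mathrm{supp}\,\chi_j)$, the comparison of $T_M\d_C^M\alpha$ with $\sum_j(\phi_j)_*\bigl(\zeta_j T_\H\d_C^\H\phi_j^\#(\chi_j\alpha)\bigr)$ produces the terms $T_\H\bigl(\phi_j^\#([\chi_j,\d_C^M]\alpha)\bigr)$, where $[\chi_j,\d_C^M]$ is a zero-order multiplication supported \emph{inside} the chart; composing with $T_\H$ gives only an operator of order $-1$ (or $-2$ at $h=n+1$), not a regularizing one. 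This is still exactly enough for the stated $W^{m-1,p}\to W^{m,p}$ (resp.\ $W^{m-2,p}\to W^{m,p}$) bounds, so the theorem survives, but the claim ``smoothing'' should be weakened. Second, the existence of a $\delta_\lambda$-homogeneous fundamental solution $G_\H$ with $\Delta_{\H,h}G_\H=G_\H\Delta_{\H,h}=Id$ and $G_\H\d_C^\H=\d_C^\H G_\H$ on all of $\H^n$ is not a purely algebraic matter: it rests on the inversion theory for homogeneous maximally hypoelliptic operators (Folland, Christ--Geller--G\l{}owacki--Polin, as used by Rumin and in \cite{BFP2}), and in the critical case $n=1$ the fourth-order Laplacian in middle degrees has order equal to $Q=4$, so the naive homogeneity count $4-Q=0$ forces logarithmic kernels; one must check that the composed operators $\delta_C^\H\d_C^\H\delta_C^\H G_\H$ and $\delta_C^\H G_\H$ are still genuine kernels of type $1$ and $2$. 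These are exactly the technical points that occupy the proof in \cite{BFP2}; your sketch identifies the architecture correctly but should not present them as immediate.
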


A starightforward consequence is the following result.
\begin{prop}\label{teo.Sobembthcontman}
Let $(M,H,g^M)$ be a $(2n+1)$-dimensional sub-Riemannian contact manifold with $C^k$-bounded geometry with $k\geq 3$. Let $1<p\leq q<\infty$ that satisfy \eqref{eq.assumIhpqn}. Then, for $1\leq h\leq 2n+1$, we have the following continuous embedding
$$W^{\ell,p}(M,E_0^h)\hookrightarrow L^q(M,E_0^h),$$
where $\ell=1$ if $h \neq n,n+1$ and $\ell=2$ if $h=n$ or $h=n+1$.
\end{prop}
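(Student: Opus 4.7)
The plan is to derive the embedding directly from the approximate smoothing homotopy formula in Theorem \ref{teo.Homformcontman}. Given $\alpha\in W^{\ell,p}(M,E_0^h)$, applying the identity $\mathrm{Id}=\d_C^M T_M+T_M\d_C^M+S_M$ on $h$-forms yields
\[
\alpha=\d_C^M T_M\alpha+T_M\d_C^M\alpha+S_M\alpha,
\]
so it suffices to estimate the $L^q$-norm of each of the three summands in terms of $\|\alpha\|_{W^{\ell,p}(M,E_0^h)}$, relying solely on the bounds asserted for $S_M$ and $T_M$.

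First I would treat the two ``easier'' pieces using the $L^p\to L^q$ continuity (the case $m=0$ of the assertion that $T_M,S_M\colon W^{m,p}_M\to W^{m,q}_M$). The term $S_M\alpha$ is handled immediately since $\alpha\in L^p$. For $T_M\d_C^M\alpha$, one notes that the choice of $\ell$ is exactly tailored so that $\d_C^M\alpha\in L^p(M,E_0^{h+1})$: when $h\neq n$, $\d_C^M$ is a first-order horizontal operator and $\ell\ge 1$; when $h=n$, $\d_C^M$ is second-order and $\ell=2$. Applying $T_M\colon L^p\to L^q$ on $(h+1)$-forms then gives $T_M\d_C^M\alpha\in L^q$.

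The remaining term $\d_C^M T_M\alpha$ is where the exponent $\ell$ on the domain side of the embedding appears. I would use the stronger mapping $T_M\colon W^{\ell,p}\to W^{\ell,q}$ on $h$-forms to place $T_M\alpha$ in $W^{\ell,q}(M,E_0^{h-1})$, and then invoke the fact that $\d_C^M$ is a differential operator of horizontal order $\ell$ on $(h-1)$-forms (order $1$ if $h-1\neq n$, order $2$ if $h-1=n$, i.e.\ $h=n+1$, which is precisely when $\ell=2$). This produces $\d_C^M T_M\alpha\in L^q$ with the correct norm control. Summing the three bounds yields the continuous embedding.

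The only delicate point — and what I would flag as the main bookkeeping obstacle — is verifying that the degree–dependent hypothesis \eqref{eq.assumIhpqn} is consistent with the degree of the form on which each occurrence of $T_M$ or $S_M$ acts. Since the formula involves $T_M$ on both $h$- and $(h+1)$-forms, one needs to check case by case ($h\neq n,n+1$; $h=n$; $h=n+1$) that the gap $\tfrac1p-\tfrac1q$ is admissible for each operator invocation. In each case the assumption on $h$ built into \eqref{eq.assumIhpqn} is either exactly the relevant one or strictly stronger than what is needed for the auxiliary degree, so no extra condition is required.
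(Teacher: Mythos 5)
Your argument is essentially identical to the paper's proof: the same application of the homotopy formula $\mathrm{Id}=\d_C^M T_M+T_M\d_C^M+S_M$, followed by the same term-by-term estimates — $T_M,S_M\colon L^p\to L^q$ for $T_M\d_C^M\alpha$ and $S_M\alpha$, and $T_M\colon W^{\ell,p}\to W^{\ell,q}$ combined with the order of $\d_C^M$ on $(h-1)$-forms for $\d_C^M T_M\alpha$. One caveat on the point you yourself flag: your claim that the hypothesis \eqref{eq.assumIhpqn} always dominates the condition needed at the auxiliary degree does not hold for $h=n+1$, where $T_M$ is applied to the $(n+2)$-form $\d_C^M\alpha$ and, under the natural degree-by-degree reading of Theorem \ref{teo.Homformcontman}, would require $\tfrac1p-\tfrac1q\le\tfrac{1}{2n+2}$, which is strictly stronger than the assumed $\tfrac1p-\tfrac1q\le\tfrac{2}{2n+2}$; the paper's proof is silent on this same point, so you have faithfully reproduced it rather than introduced a new error.
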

\begin{proof}
Let $\gamma\in W^{\ell,p}(M,E_0^h)$. Then, by homotopy formula in Theorem \ref{teo.Homformcontman}, 
$$\gamma=\d_C^MT_M\gamma+T_M\d_C^M\gamma+S_M\gamma.$$
Thus,
$$\|\gamma\|_{L^q}\leq\|\d_C^MT_M\gamma\|_{L^q}+\|T_M\d_C^M\gamma\|_{L^q}+\|S_M\gamma\|_{L^q}.$$
Recalling that $\d_C^M$ is a differential operator of order $2$ if $h=n,n+1$ and of order $1$ otherwise, and applying the properties of $T_M$, we have
\begin{itemize}
\item if $h\neq n, n+1$,
\begin{equation*}
\|\d_C^M T_M\gamma\|_{L^q}\leq c \|T_M\gamma\|_{W^{1,q}}\leq c \|\gamma\|_{W^{1,p}},\quad\quad\|T_M\d_C^M\gamma\|_{L^q}\leq c \|\d_C^M\gamma\|_{L^p}\leq c \|\gamma\|_{W^{1,p}}.
\end{equation*}
\item if $h=n+1$,
\begin{equation*}
\|\d_C^M T_M\gamma\|_{L^q}\leq c \|T_M\gamma\|_{W^{2,q}}\leq c \|\gamma\|_{W^{2,p}},\quad\quad\|T_M\d_C^M\gamma\|_{L^q_M}\leq c \|\d_C^M\gamma\|_{L^p}\leq c \|\gamma\|_{W^{1,p}}.
\end{equation*}
\item if $h=n$,
\begin{equation*}
\|\d_C^M T_M\gamma\|_{L^q}\leq c \|T_M\gamma\|_{W^{1,q}}\leq c \|\gamma\|_{W^{1,p}},\quad\quad\|T_M\d_C^M\gamma\|_{L^q}\leq c \|\d_C^M\gamma\|_{L^p}\leq c \|\gamma\|_{W^{2,p}}.
\end{equation*}
\end{itemize}
Using also the properties of $S_M$, we obtain that
$$\|\gamma\|_{L^q}\leq c\|\gamma\|_{W^{\ell,p}},$$
completing the proof.
\end{proof}

\begin{oss}\label{oss.SobembstimaCQ}
Set 
\begin{equation*}
C_Q:=\left\{\begin{array}{ll}
2n+2,\quad&\text{if $h\neq n+1$}, \\
n+1,\quad&\text{if $h= n+1$},
\end{array}\right.
\end{equation*}
and let $\ell=1$ if $h \neq n,n+1$ and $\ell=2$ if $h=n$ or $h=n+1$.
By the previous proposition, for any $\gamma\in W^{\ell,p}$ and $p\leq q\leq p+\frac{p}{C_Q}$ we have
$$\|\gamma\|_{L^q}\leq c\|\gamma\|_{W^{\ell,p}}\,.$$
\end{oss}

\medskip

In the setting of compact contact manifolds,  the following compact embeddings also hold true.
\begin{oss}\label{teo.compactembcontact}
Let $(M,H,g^M)$ be a compact sub-Riemannian contact manifold. Then, for every $1< p<\infty$, we have the following compact embeddings:
$$W^{2,p}(M,E_0^h)\hookrightarrow W^{1,p}(M,E_0^h)\hookrightarrow L^p(M,E_0^h),$$
for every $0\leq h\leq 2n+1$ and $m\in\N$.
\end{oss}
\begin{proof}
At first, we recall that the covering of $M$ is finite since $M$ is compact.
Let us prove, for example, that
$$W^{1,p}(M,E_0^h)\hookrightarrow L^p(M,E_0^h).$$
Let $(\alpha_k)_{k\in\N}$ be a sequence in $W^{1,p}(M,E_0^h)$ that is bounded in norm by a constant $C>0$, i.e.\,
$$\sup_{k\in\N}\,\|\alpha_k\|^p_{W^{1,p}(M,E_0^h)}\leq C.$$
Hence, using the same covering $\{(B_j,\phi_j)\}_{j}$ that was chosen for the validity of Theorem \ref{teo.GaffIneqcontman}, with $\eta/C_M<1$, we have for every $j$
\begin{align*}
\sup_{k\in\N}\,\left\|\phi^\#_{j}(\alpha_k|_{B_j})\right\|^p_{W^{1,p}(B_\H(e,\eta/C_M),E_0^h)}&\approx\sup_{k\in\N}\,\left\|\alpha_k|_{B_j}\right\|^p_{W^{1,p}(B_j,E_0^h)}\\
&\leq\sup_{k\in\N}\,\|\alpha_k\|^p_{W^{1,p}(M,E_0^h)}\leq C\,.
\end{align*}
Thus, $\big(\phi^\#_{j}(\alpha_k|_{B_j})\big)_{k\in\N}$ is a bounded sequence in $W^{1,p}(B_\H(e,1),E_0^h)$ and we can apply  \eqref{teo.compactembHnforms}. Hence, there exists a form $\gamma_j\in L^p(B_\H(e,1),E_0^h)$ and a subsequence $(m_k)_{k\in\N}$ such that we have
\begin{equation}\label{eq.conveunifindex}
\phi^\#_{j}(\alpha_{m_k}|_{B_j})\xrightarrow{k \rightarrow +\infty}\gamma_j\quad \text{in}\quad L^p(B_\H(e,1),E_0^h),
\end{equation}
or, equivalently,
\begin{equation}\label{eq.conveunifindex2}
\alpha_{m_k}|_{B_j}\xrightarrow{k \rightarrow +\infty}(\phi_{j}^{-1})^\#\gamma_j\quad \text{in}\quad L^p(B_j,E_0^h).
\end{equation}
We can extract at every step $j$, a subsequence of indexes from the previous $(j-1)$-th sequence so that the convergences \eqref{eq.conveunifindex} hold with the same sequence of indexes that we continue to label $(m_k)_{k\in\N}$ for every $j$.

Now, we want to define a form $\alpha$ on $M$ such that on the local chart $(B_j,\phi_{j})$ it is given by the pullback of $\gamma_j$ by $\phi_{j}^{-1}$, i.e.\,we want that $\alpha|_{B_j}=(\phi_{j}^{-1})^\#\gamma_j$, for every $j$. The form $\alpha$ exists since we have the following gluing property:
\begin{align*}
\|(&\phi_{j}^{-1})^\#\gamma_j|_{B_i}-(\phi_{i}^{-1})^\#\gamma_i|_{B_j}\|^p_{L^p(B_i\cap B_j,E_0^h)}\\
&\leq\|(\phi_{j}^{-1})^\#\gamma_j|_{B_i}-\alpha_{m_k}|_{B_i\cap B_j}\|^p_{L^p(B_i\cap B_j,E_0^h)}\\
&\quad\quad\quad+\|(\phi_{i}^{-1})^\#\gamma_i|_{B_j}-\alpha_{m_k}|_{B_i\cap B_j}\|^p_{L^p(B_i\cap B_j,E_0^h)}\\
&\leq \|(\phi_{j}^{-1})^\#\gamma_j-\alpha_{m_k}|_{B_j}\|^p_{L^p(B_j,E_0^h)}\\
&\quad\quad\quad+\|(\phi_{i}^{-1})^\#\gamma_i-\alpha_{m_k}|_{B_i}\|^p_{L^p(B_i,E_0^h)}\xrightarrow{k\to+\infty} 0,
\end{align*}
showing that $(\phi_{j}^{-1})^\#\gamma_j|_{B_i}\equiv(\phi_{i}^{-1})^\#\gamma_i|_{B_j}$, for every $i,j$.

We want to prove that $\alpha_{m_k}\xrightarrow{k \rightarrow +\infty}\alpha$ in $L^p(M,E_0^h)$.
At first, keeping also in mind that the sum on $j$ is finite, we observe that
\begin{align*}
\|\alpha\|^p_{L^p(M,E_0^h)}\leq\sum_{j}\|\alpha|_{B_j}\|^p_{L^p(B_j,E_0^h)}\leq c\sum_{j}\|\gamma_j\|^p_{L^p(B_\H(e,1),E_0^h)}<\infty.
\end{align*}
Thus, $\alpha\in L^p(M,E_0^h)$. Moreover
\begin{align*}
\|\alpha_{m_k}-\alpha\|^p_{L^p(M,E_0^h)}&\leq\sum_{j\,\text{finite}}\|\alpha_{m_k}|_{B_j}-\alpha|_{B_j}\|^p_{L^p(B_j,E_0^h)}\\
&=\sum_{j\,\text{finite}}\|\alpha_{m_k}|_{B_j}-(\phi_{j}^{-1})^\#\gamma_j\|^p_{L^p(B_j,E_0^h)}\xrightarrow{k \rightarrow +\infty} 0.
\end{align*}
This concludes the proof of the compact embeddings.
\end{proof}
A consequence of Theorem \ref{teo.GaffIneqcontman} is contained in the next theorem, and will be useful in Section \ref{sec:poincineq}.
Let us define $$\mathcal{L}^{1,p}(M, E_0^h)=\left\{\alpha\in L^p(M, E_0^h):\, \d^M_C \alpha\in L^{p}(M, E_0^{h+1}),\, \, \delta^M_C \alpha \in L^{p}(M, E_0^{h-1})\right\}$$ when $h\neq n,n+1$ endowed with the norm
$$\|\alpha\|_{L^p(M,E_0^h)}+\|\d_C^M\alpha\|_{L^{p}(M,E_0^{h+1})}+\|\delta^M_C\alpha\|_{L^{p}(M,E_0^{h-1})}.$$

On the other hand, 
if $h=n$, we define the space 
$$\mathcal{L}_n^{2,p}(M, E_0^n)=\left\{\alpha\in L^p(M, E_0^n):\ \d^M_C \alpha\in L^{p}(M, E_0^{n+1}), \ \ \d^M_C\delta^M_C \alpha \in L^{p}(M, E_0^{n})\right\}$$ equipped with the norm  
$$\|\alpha\|_{L^p(M,E_0^n)}+\|\d_C^M\alpha\|_{L^{p}(M,E_0^{n+1})}+\|\d^M_C\delta^M_C\alpha\|_{L^{p}(M,E_0^{n})}.$$ 

Similarly, if 
 $h=n+1$, we set 
$$\mathcal{L}_{n+1}^{2,p}(M, E_0^{n+1})=\left\{\alpha\in L^p(M, E_0^{n+1}):\ \delta^M_C\d_C^M\alpha\in L^{p}(M, E_0^{n+1}), \ \ \delta^M_C \alpha \in L^{p}(M, E_0^{n})\right\}$$ equipped with the norm  
$$\|\alpha\|_{L^p(M,E_0^{n+1})}+\|\delta_C^M\alpha\|_{L^{p}(M,E_0^{n})}+\|\delta^M_C\d_C^M\alpha\|_{L^{p}(M,E_0^{n+1})}.$$ 

\begin{teo}\label{spazi uguali}
Let $(M,H,g^M)$ be a sub-Riemannian contact compact manifold without boundary. Let $1\leq h\leq 2n$, $1< p<\infty$. With the notation above, 
\begin{itemize}
	\item[i)] if $h\neq n, n+1$,  $\mathcal{L}^{1,p}(M, E_0^h)=W^{1,p}(M, E_0^h)$,
	\item[ii)] if $h= n$,  $\mathcal{L}_n^{2,p}(M, E_0^n)=W^{2,p}(M, E_0^n)$,
	\item[iii)] if $h= n+1$,  $\mathcal{L}_{n+1}^{2,p}(M, E_0^{n+1})=W^{2,p}(M, E_0^{n+1})$.
\end{itemize}
\end{teo}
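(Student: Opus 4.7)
The plan is to establish the nontrivial inclusions $\mathcal{L}^{1,p}\subseteq W^{1,p}$, $\mathcal{L}_n^{2,p}\subseteq W^{2,p}$ and $\mathcal{L}_{n+1}^{2,p}\subseteq W^{2,p}$; the reverse inclusions are immediate from the orders of $\d_C^M$ and $\delta_C^M$ recalled in Sections \ref{subsec:RumcomplHn}--\ref{subsec:Rumcomplcontman}, since each term in the graph norms is continuously dominated by the corresponding Sobolev norm. The obstruction to applying Theorem \ref{teo.GaffIneqcontman} directly is that its hypotheses already require $\alpha\in W^{1,p}$ (resp.\ $W^{2,p}$); it remains to produce a smooth approximation of $\alpha$ inside the ambient Sobolev space and to close the inequality by density.

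For case (i), I would fix the atlas $\{B_M(x_j,\rho),\phi_j\}_j$ of Lemma \ref{lemma.coveringl} and the subordinate partition of unity $\{\chi_j\}_j$ from Definition \ref{defi.Sobolevcontmanbg}. Given $\alpha\in\mathcal{L}^{1,p}(M,E_0^h)$, decompose $\alpha=\sum_j \chi_j\alpha$, pull each localized piece back to $\H^n$ via $\phi_j^\#$, convolve with a standard family of mollifiers $\{J_\varepsilon\}_{\varepsilon>0}$ on $\H^n$, push forward and sum, obtaining a smooth form $\alpha_\varepsilon\in C^\infty(M,E_0^h)\cap W^{1,p}(M,E_0^h)$ with $\alpha_\varepsilon\to\alpha$ in $L^p$ as $\varepsilon\to 0^+$. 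The crucial technical step is a Friedrichs-type commutator lemma showing that
$$\d_C^M\alpha_\varepsilon\longrightarrow \d_C^M\alpha,\qquad \delta_C^M\alpha_\varepsilon\longrightarrow \delta_C^M\alpha\quad\text{in }L^p.$$
In each chart this reduces to commuting the convolution $J_\varepsilon *$ with the left-invariant horizontal vector fields on $\H^n$, modulo error terms coming from the cut-offs $\chi_j$ and from the Jacobians of the transition maps, which are controlled uniformly in $j$ by the $C^k$-bounded geometry assumption. Applying Theorem \ref{teo.GaffIneqcontman}(i) to $\alpha_\varepsilon-\alpha_{\varepsilon'}$ then yields that $\{\alpha_\varepsilon\}$ is Cauchy in $W^{1,p}(M,E_0^h)$, and its limit must coincide with $\alpha$, so $\alpha\in W^{1,p}(M,E_0^h)$.

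Cases (ii) and (iii) follow along the same lines, invoking the $W^{2,p}$-Gaffney inequalities \eqref{eq.GaffIneqn} and \eqref{eq.GaffIneqnp1} in place of \eqref{eq.GaffIneqh}. The main obstacle is that in these critical degrees the Rumin operators carry an extra horizontal derivative: for $h=n$ the differential $\d_C^M$ is of order two and the combination $\d_C^M\delta_C^M$ is of order three, and analogously for $h=n+1$. Consequently, the Friedrichs lemma must be upgraded to handle commutators of the mollification with products of two and three horizontal vector fields, and the localization errors from the cut-offs $\chi_j$ and from the transition maps must still be absorbed. These higher-order commutator estimates are precisely the technical content announced in Appendix B; once they are in place, the same Cauchy-sequence argument based on Theorem \ref{teo.GaffIneqcontman}(ii)--(iii) gives cases (ii) and (iii) (alternatively, case (iii) can be deduced from (ii) by conjugating with the Hodge $\ast$-isomorphism, which intertwines $E_0^n$ and $E_0^{n+1}$ and swaps $\d_C^M$ with $\pm\delta_C^M$).
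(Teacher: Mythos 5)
Your overall strategy (localize, mollify, invoke the Gaffney inequality, conclude by a limiting argument) is the same as the paper's, which in Appendix~B pulls each localized piece $\psi_j^\#(\chi_j\alpha)$ back to $\he{n}$ and mollifies there with the \emph{group} convolution, so that $\d_C^{\he{}}(\omega_\eps\ast u)=\omega_\eps\ast \d_C^{\he{}}u$ and $\delta_C^{\he{}}(\omega_\eps\ast u)=\omega_\eps\ast\delta_C^{\he{}}u$ hold exactly (left-invariance), and then concludes via a uniform bound, weak compactness and lower semicontinuity of the norm rather than via a Cauchy sequence. Your case (i) is essentially sound: the only genuine commutators are $[\d_C^M,\chi_j]$, $[\delta_C^M,\chi_j]$ (order zero) and the defect operator $\mathcal{P}=\delta_C^{\he{}}\psi_j^\#-\psi_j^\#\delta_C^M$ of Proposition~\ref{prop.notcommdeltapull}, which is first order with smooth coefficients, and the classical Friedrichs lemma $[J_\eps\ast,P]u\to 0$ in $L^p$ for first-order $P$ and $u\in L^p$ does apply. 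The duality reduction of (iii) to (ii) via $\ast$ is also fine.

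There is, however, a genuine gap in your treatment of the middle degrees. For $h=n$ the operator $\d_C^M:E_0^n\to E_0^{n+1}$ has order two, so the localization error $[\d_C^M,\chi_j]\alpha$ is a \emph{first-order} horizontal operator applied to $\alpha$ (cf.\ Remark~\ref{oss.stimecommdeltadC}, where these commutators are controlled by $W^{1,p}$-norms, not $L^p$-norms); since $\alpha$ is only known to lie in $L^p$, this term is a priori a distribution, not an $L^p$ function, and no Friedrichs-type commutator lemma can deliver the strong convergence $\d_C^M\alpha_\eps\to\d_C^M\alpha$ in $L^p$ that your Cauchy-sequence argument with \eqref{eq.GaffIneqn} presupposes. (A Friedrichs lemma gives $[J_\eps\ast,P]u\to 0$ in $L^p$ only for first-order $P$; for second-order operators the commutator still carries one derivative of $u$.) The same obstruction affects $\mathcal{P}$ in degree $n+1$, where it has order two. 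The way out — and what the paper's appeal to \cite{BTT} actually encodes — is to abandon strong convergence of the graph-norm quantities and instead apply the Gaffney inequality to the smooth mollified forms, absorbing the offending one-derivative error terms into the left-hand side using the smallness of $\mathcal{P}$ on small charts together with Ehrling's inequality (exactly as in the proof of Theorem~\ref{teo.globalmaxhypocont}); this yields a bound on $\|\alpha_\eps\|_{W^{2,p}}$ uniform in $\eps$, after which one extracts a weak limit and identifies it with $\alpha$. As a minor point, your order count is off: $\d_C^M\delta_C^M$ on $E_0^n$ has order $1+1=2$ (both factors avoid the middle transition), not three, which is why $\mathcal{L}_n^{2,p}$ is compared with $W^{2,p}$ and not a higher-order space.
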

The proof of this result is contained in Appendix B.
%
%
%
%
%
%
%
%
%
%
\subsection{A global maximal hypoellipticity  on contact manifolds}\label{subsec:Rumlapmangeom}

We end this introductory section  proving a global $L^p$-maximal hypoellipticity of {Rumin Laplacians}, for any $p\in ]1,\infty[$ when $M$ is compact. 
 The result is stated in  Theorem \ref{teo.globalmaxhypocont}.

As in the Heisenberg case, the operator $\Delta_{M,h}$ (see \eqref{defi.RuminLapcontman intro}) is a self-adjoint operator that commutes with the Hodge operator as
$\ast \Delta_{M,h}=\Delta_{M,2n+1-h}\ast$,
and the Laplacians commute with $\d^M_C$ and $\delta^M_C$. 


As already mentioned, Rumin in \cite{rumin_jdg} (Theorem p.\,$190$)  proved that these Laplacians are $L^2$-{maximal hypoelliptic operators} and hypoelliptic (see also Theorem $3.1$ in \cite{rumin_gafa} and \cite{RS}). We now aim to prove a \textit{global} maximal hypoellipticity in $L^p$, as stated in Theorem \ref{teo.globalmaxhypocont} below. The proof uses the local result  \eqref{eq:lp-hypo} and, by reasoning in charts,  we have to address the issue of lack of commutation between pull-back and $\delta^M_C$. Indeed, since $\delta_C^M$ involves the Hodge operator, $\delta_C^\H\psi^\#\neq\psi^\#\delta_C^M$. In \cite{BTT},  an explicit expression for the extent to which  the co-differential and the pull-back fail to commute is given. In particular, keeping in mind Theorem 3.10 in \cite{BTT},
we recall the statements of Proposition $4.9$ and Remark $4.10$ in \cite{BTT}, that read as follows.
\begin{prop}\label{prop.notcommdeltapull}
Let $(M,H,g^M)$ be a sub-Riemannian contact manifold with $C^k$-bounded geometry, $k\ge 3$. For any $a\in M$ there exists a contact map $\psi_a$ satisfying the conditions of Definition \ref{defi.subRiemcontmanwboundgeom} and such that
the map 
			\begin{align*}
				(d\psi_a)_e\colon T_e\mathbb{H}^n\to T_aM\,,\\
				v\mapsto (d\psi_a)_e(v)\,,
			\end{align*}
			is an orthogonal linear map. 
			
			Let us define a differential operator $\mathcal{P}$ on $\H^n$ as
\begin{equation}\label{eq.Pcommnondcstar}
\mathcal{P}=\mathcal{P}(\psi_a^\#\alpha):=\delta^\H_C\psi_a^\#\alpha-\psi_a^\#\delta_C^M\alpha,\quad\quad\text{for every $\alpha\in C^\infty(M,E_0^h)$.}
\end{equation}
Let now $1<p<\infty$, $m\in\N$. For any $\e>0$ there exists $\tilde\eta=\tilde\eta_\e>0$ such that if $\eta<\tilde\eta$ and $\alpha\in C^\infty(M,E_0^h)$ supported in $B_M(a,\eta)$, we have
\begin{equation*}
\|\mathcal{P}(\psi_a^\#\alpha)\|_{W^{m,p}(A,E_0^{h-1})}\leq \e\,\|\psi_a^\#\alpha\|_{W^{m+1,p}(A,E_0^h)}\quad\quad\text{if $h\neq n+1$}
\end{equation*}
and
\begin{equation*}
\|\mathcal{P}(\psi_a^\#\alpha)\|_{W^{m,p}(A,E_0^{n})}\leq \e\,\|\psi_a^\#\alpha\|_{W^{m+2,p}(A,E_0^{n+1})}\quad\quad\text{if $h=n+1$},
\end{equation*}
where $A:=\psi_a^{-1}(B_M(a,\eta))$.
\end{prop}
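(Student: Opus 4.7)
The plan is to exploit that contactomorphism pullback commutes with $\d_C$ but not with the Hodge star, and that the failure of commutation is forced to vanish at $e$ by the orthogonality hypothesis on $(d\psi_a)_e$. By property (2) of Section \ref{subsec:Rumcomplcontman}, $\d^\H_C\psi_a^\#=\psi_a^\#\d^M_C$. Substituting $\delta_C=(-1)^h\ast\d_C\ast$ on both $\H^n$ and $M$ into the definition of $\mathcal{P}$, the difference reduces to the commutator $[\ast,\psi_a^\#]$ composed with $\d_C$; consequently $\mathcal{P}$ is a differential operator of order $1$ if $h\neq n+1$ and of order $2$ if $h=n+1$, matching the order of $\delta_C$ itself.

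The crucial observation is that the coefficients of $[\ast,\psi_a^\#]$ vanish at $e$. The Hodge star on $E_0^\bullet$ and the volume form $\theta^M\wedge(\d\theta^M)^n$ are algebraically determined by $g^M$ and $\theta^M$; the assumption that $(d\psi_a)_e$ is orthogonal forces $\psi_a^\ast g^M$ to coincide with the Heisenberg metric at the single point $e$. By smoothness together with the $C^k$-bounded geometry of $M$, these coefficient functions are then smooth on $B_\H(e,1)$, uniformly bounded in $a$ together with all their horizontal derivatives up to order $k-1$, and $O(\rho(\cdot))$ near $e$ by Taylor's theorem, where $\rho$ is the Cygan--Kor\'anyi norm.

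Combining these ingredients, I would write $\mathcal{P}=\sum_I c_I\,W^{J_I,\H}$ with $d(J_I)\leq 1$ (respectively $\leq 2$ when $h=n+1$) and $c_I(e)=0$, so that $|c_I|\leq C\eta$ on $A\subset B_\H(e,C_M\eta)$. Expanding $\|\mathcal{P}(\psi_a^\#\alpha)\|_{W^{m,p}(A,E_0^{h-1})}$ by the Leibniz rule, terms where no derivative lands on a $c_I$ are bounded directly by $C\eta\,\|\psi_a^\#\alpha\|_{W^{m+1,p}}$ (respectively $W^{m+2,p}$). Terms where $j\geq 1$ derivatives fall on $c_I$ lose the factor $\eta$ from $c_I$ itself but retain only $m-j$ derivatives on $\psi_a^\#\alpha$; since $\psi_a^\#\alpha$ is compactly supported in a Kor\'anyi ball of radius $\lesssim\eta$, each missing horizontal derivative is recovered by a sub-Riemannian Poincar\'e inequality at the price of a factor of $\eta$, so these terms are again controlled by $C\eta$ times the same Sobolev norm. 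Choosing $\tilde\eta=\e/C$ then yields the claim.

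The main obstacle I foresee is the case $h=n+1$. Because $\delta_C$ on $E_0^{n+1}$ is second order and is built algebraically from the second-order operator $\d_C:E_0^n\to E_0^{n+1}$ via Rumin's projection onto $E_0^{n+1}$, certifying that the coefficients of $\mathcal{P}$ genuinely vanish to first order at $e$ (rather than merely being bounded) requires carefully unwinding Rumin's construction of $E_0^n$ and $E_0^{n+1}$ together with the algebraic relations between $g^M$, $\theta^M$ and the Reeb vector field $\xi^M$, rather than arguing purely on the de Rham side and then projecting.
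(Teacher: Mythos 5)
First, a caveat: the paper offers no proof of this proposition — it is imported verbatim from \cite{BTT} (Proposition 4.9 and Remark 4.10 there), so there is no in-paper argument to compare yours against. On its own terms, your strategy is sound and is, by all indications, the intended one: the non-commutation is concentrated in the Hodge star, the orthogonality of $(d\psi_a)_e$ forces the resulting error coefficients to vanish at $e$ and hence to be $O(\eta)$ on $\psi_a^{-1}(B_M(a,\eta))\subseteq B_\H(e,C_M\eta)$, and the Leibniz terms where derivatives fall on the coefficients are recovered by the Poincar\'e inequality for compactly supported functions on a ball of radius $\lesssim\eta$. The obstacle you flag for $h=n+1$ in your last paragraph actually dissolves inside your own framework: since $\d_C^\H\psi_a^\#=\psi_a^\#\d_C^M$ holds exactly, setting $R:=\ast_\H-\psi_a^\#\ast_M(\psi_a^{-1})^\#$ (a genuine zeroth-order bundle map with $R=0$ at $e$) yields
\[
\delta_C^\H\psi_a^\#-\psi_a^\#\delta_C^M=\pm\left(\ast_\H\,\d_C^\H\,R+R\,\d_C^\H\,\ast_\H-R\,\d_C^\H\,R\right)\psi_a^\#,
\]
so the only coefficients multiplying top-order (i.e.\ order of $\delta_C$) derivatives of $\psi_a^\#\alpha$ are undifferentiated entries of $R$, which vanish at $e$; every term in which $\d_C^\H$ differentiates $R$ is of strictly lower order in $\alpha$ and is already absorbed by your Poincar\'e rescaling, with no need to unwind Rumin's construction in middle degree (note also that the lower-order coefficients need not vanish at $e$ at all for the argument to close). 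Two points do need more care than you give them: (i) the identity $\psi_a^\#E_0^\bullet=E_0^\bullet$ holds for the intrinsic realization of the Rumin bundles, whereas $\ast$ and $\delta_C$ use the metric realization as subbundles, so $R$ must also account for the discrepancy between the two orthogonal projections — this is precisely what Theorem 3.10 of \cite{BTT}, cited just before the statement, quantifies; (ii) uniform boundedness of the coefficients together with their first $m$ horizontal derivatives, and uniformity in $a$, come from the $C^k$-bounded geometry, so the admissible range of $m$ is tied to $k$ and the blanket ``for all $m\in\N$ with $k\ge 3$'' should be read with that restriction.
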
%

\begin{oss}\label{oss.stimecommdeltadC}
Let $1<p<\infty$ and $a\in M$. Consider a countable, locally finite, atlas $\mc U:=\{ B(a_j, \eta), \psi_{j}\}$  (we refer to Remark 3.5 in \cite{BTT} for details), where $\psi_j: B(e,\eta/C_M)\to M$. 
 Let now $\{\chi_j\}$ be a partition of unity {  subordinate} to the atlas. 
 Combining Lemma $4.1$ in \cite{BFP3} and Lemma $4.11$ in \cite{BTT}, and going trought the proof of Theorem 5.4 in \cite{BTT}, if $\alpha\in C^\infty(M,E_0^h)$ then the $L^p$-norms of the commutators $[\d_C^M,\chi_j]\alpha, [\delta_C^M,\chi_j]\alpha, [\delta_C^M\d_C^M,\chi_j]\alpha, [\d_C^M\delta_C^M,\chi_j]\alpha$ can be estimated, up to a constant, by $\sum_{k\in I_j}\|\psi_k^\#(\chi_k\alpha)\|_{W^{s-1,p}(\H^n,E_0^h)}$, where $s$ is the order of $\d_C^M, \delta_C^M, \delta_C^M\d_C^M, \d_C^M\delta_C^M$, respectively (again with the identification of the $L^p$ norm with $W^{0,p}$ when $s=1$).
\end{oss}
If we assume $M$ is compact we can  prove a $L^p$-maximal hypoellipticity estimate. 
\begin{teo}\label{teo.globalmaxhypocont}
Let $(M,H,g^M)$ be a sub-Riemannian compact contact manifold without boundary. Let $0\leq h\leq 2n+1$, and $1< p<\infty$. Then, there exists a geometric constant $C_{M,p}>0$ such that, for every $\alpha\in W^{2\ell,p}(M, E_0^h)$, we have
\begin{equation*}
\|\alpha\|_{W^{2\ell,p}(M,E_0^h)}\leq C\Big(\|\Delta_{M,h}\alpha\|_{L^p(M,E_0^h)}+\|\alpha\|_{L^p(M,E_0^h)}\Big),
\end{equation*}
where $\ell=1$ if $h \neq n,n+1$ and $\ell=2$ otherwise.
\end{teo}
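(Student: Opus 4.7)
The plan is to localize via a partition of unity subordinate to a finite atlas (possible because $M$ is compact), apply the local $L^p$-maximal hypoellipticity \eqref{eq:lp-hypo} on $\H^n$ chart by chart, and then patch everything back together. Concretely, I would fix a small parameter $\eta$ (to be determined) and choose a finite atlas $\{(B_M(a_j,\eta),\psi_j)\}_{j=1}^N$ of contact charts satisfying Definition \ref{defi.subRiemcontmanwboundgeom}, together with a subordinate partition of unity $\{\chi_j\}$. For $\alpha\in W^{2\ell,p}(M,E_0^h)$, each $\psi_j^\#(\chi_j\alpha)$ is supported in a ball of radius $\eta/C_M$ in $\H^n$, so \eqref{eq:lp-hypo} yields
\begin{equation*}
\|\psi_j^\#(\chi_j\alpha)\|_{W^{2\ell,p}(\H^n,E_0^h)}\le C\Bigl(\|\Delta_{\H,h}\psi_j^\#(\chi_j\alpha)\|_{L^p(\H^n,E_0^h)}+\|\psi_j^\#(\chi_j\alpha)\|_{L^p(\H^n,E_0^h)}\Bigr).
\end{equation*}

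The core technical step is to replace $\Delta_{\H,h}\psi_j^\#(\chi_j\alpha)$ by $\psi_j^\#(\chi_j\Delta_{M,h}\alpha)$ modulo controlled error terms. Since $\psi_j$ is a contactomorphism, $\d_C^\H\psi_j^\#=\psi_j^\#\d_C^M$, but $\delta_C^\H\psi_j^\#\neq\psi_j^\#\delta_C^M$. Writing the commutator as $\mathcal{P}$ of Proposition \ref{prop.notcommdeltapull} and expanding $\Delta_{\H,h}$ according to its definition on each degree $h$, the difference $\Delta_{\H,h}\psi_j^\#-\psi_j^\#\Delta_{M,h}$ is a sum of terms each containing at least one factor of $\mathcal{P}$; by Proposition \ref{prop.notcommdeltapull}, for any $\varepsilon>0$ we can choose $\eta$ small enough so that these terms are bounded by $\varepsilon\|\psi_j^\#(\chi_j\alpha)\|_{W^{2\ell,p}}$. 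Separately, commuting $\chi_j$ past $\Delta_{M,h}$ introduces commutator terms whose $L^p$-norms are, by Remark \ref{oss.stimecommdeltadC}, bounded by $\sum_{k\in I_j}\|\psi_k^\#(\chi_k\alpha)\|_{W^{2\ell-1,p}(\H^n,E_0^h)}$ (the commutator reduces the order by one in each component of the Laplacian).

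Combining the two previous steps and summing over the finitely many indices $j$, I obtain
\begin{equation*}
\|\alpha\|_{W^{2\ell,p}(M,E_0^h)}\le C\Bigl(\|\Delta_{M,h}\alpha\|_{L^p(M,E_0^h)}+\|\alpha\|_{W^{2\ell-1,p}(M,E_0^h)}\Bigr)+C\varepsilon\,\|\alpha\|_{W^{2\ell,p}(M,E_0^h)},
\end{equation*}
after using the equivalence of $\|\psi_j^\#(\chi_j\alpha)\|_{W^{2\ell,p}(\H^n)}$ with $\|\chi_j\alpha\|_{W^{2\ell,p}(M)}$ (uniformly in $j$) and the locally finite overlap of the charts. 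Choosing $\varepsilon$ small enough to absorb the last term into the left-hand side, and applying the Ehrling-type inequality on $M$ (which follows from the compact embedding $W^{2\ell,p}(M,E_0^h)\hookrightarrow W^{2\ell-1,p}(M,E_0^h)$ of Remark \ref{teo.compactembcontact}) to write $\|\alpha\|_{W^{2\ell-1,p}}\le\varepsilon'\|\alpha\|_{W^{2\ell,p}}+c_{\varepsilon'}\|\alpha\|_{L^p}$, a second absorption yields the desired inequality.

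The main obstacle is the bookkeeping of the error terms arising from the non-commutation $\delta_C^\H\psi_j^\#\neq\psi_j^\#\delta_C^M$: since $\Delta_{M,h}$ has order $4$ in the critical degrees $h=n,n+1$, the Laplacian is a composition of two second-order blocks, and Proposition \ref{prop.notcommdeltapull} must be iterated (or applied to the inner $\delta_C^M\d_C^M$-like factor) to guarantee that every correction is genuinely of order strictly less than $2\ell$, or can be made arbitrarily small via the choice of $\eta$. Once this careful expansion is done, a final density argument from $C^\infty(M,E_0^h)$ (justified by part (2) of Theorem \ref{teo.denseSobspHn} transferred to $M$ via the atlas) extends the estimate from smooth forms to all of $W^{2\ell,p}(M,E_0^h)$.
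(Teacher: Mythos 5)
Your proposal is correct and follows essentially the same route as the paper's proof: localization by a finite partition of unity, the local estimate \eqref{eq:lp-hypo}, control of the pull-back/codifferential commutator $\mathcal{P}$ via Proposition \ref{prop.notcommdeltapull}, the cutoff commutator bounds of Remark \ref{oss.stimecommdeltadC}, and a double absorption using Ehrling's inequality. The only cosmetic differences are that the paper applies Ehrling locally on $B_\H(e,\eta/C_M)$ rather than globally on $M$, and that it carries out the bookkeeping explicitly only for $h\neq n,n+1$ (the order-$4$ iteration you flag is indeed the extra work in the middle degrees).
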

\begin{proof}
The argument that we adopt in this  proof is akin to the proof of Theorem 5.4 in \cite{BTT} but, since $M$ is compact we can avoid some technicality; therefore we limit ourselves to proving the result for  $h\neq n,n+1$, hence $\ell=1$.

Let $\alpha\in C^\infty(M,E_0^h)$. Let $\left\{B_M(x_j,\eta), \psi_{j}\right\}_{j=1}^s$ a finite covering of M. Let $\{\chi_j\}_{j}$ be a partition of unity subordinated to the atlas. Without loss of generality, we can assume that $\psi_j^{-1}(\mathrm{supp}\,\chi_j)\subseteq B_\H(e,\eta/C_M)$. 

By the maximal hypoellipticity of $\Delta_{\H,h}$ on $B_\H(e,1)$ (see \eqref{eq:lp-hypo}), we have
\begin{gather*}
\begin{split}
\|\alpha\|^p&_{W^{2,p}(M,E_0^h)}=\sum_{j=1}^s\,\|\psi_j^\#(\chi_j\alpha)\|^p_{W^{2,p}(B_\H(e,\eta/C_M),E_0^h)}\\
&\leq C_p\sum_{j=1}^s\,\left(\|\Delta_{\H,h}\psi_j^\#(\chi_j\alpha)\|^p_{L^p(B_\H(e,\eta/C_M),E_0^h)}+\|\psi_j^\#(\chi_j\alpha)\|^p_{L^p(B_\H(e,\eta/C_M),E_0^h)}\right).
\end{split}
\end{gather*}

For every $j$ fixed, using Proposition \ref{prop.notcommdeltapull}, we commute $\Delta_{\H,h}$ with $\psi^\#_j$, obtaining
\begin{align*}
\|\Delta_{\H,h}(\psi_j^\#(\chi_j\alpha))\|^p_{L^p_\H}&=\|(\d_C^\H\delta_C^\H+\delta_C^\H\d_C^\H)(\psi_j^\#(\chi_j\alpha))\|^p_{L^p_\H}\\
&=\|\d_C^\H\delta_C^\H(\psi_j^\#(\chi_k\alpha))+\delta_C^\H\psi_j^\#(\d_C^M(\chi_j\alpha))\|^p_{L^p_\H} \\
&=\|\d_C^\H(\psi_j^\#(\delta_C^M(\chi_j\alpha)))+\d_C^\H \mc P(\psi_j^\#(\chi_j\alpha))\\
&\quad\quad\quad\quad+\psi_j^\#(\delta_C^M\d_C^M(\chi_j\alpha))+\mc P(\psi_j^\#(\d_C^M(\chi_j\alpha)))\|^p_{L^p_\H} \\
&=\|\psi_j^\#(\d_C^M\delta_C^M(\chi_j\alpha))+\psi_j^\#(\delta_C^M\d_C^M(\chi_j\alpha))\\
&\quad\quad\quad\quad+\d_C^\H \mc P(\psi_j^\#(\chi_j\alpha))+\mc P(\psi_j^\#(\d_C^M(\chi_j\alpha)))\|^p_{L^p_\H} \\
&\leq c_p\Big({\|\psi_j^\#(\Delta_{M,h}(\chi_j\alpha))\|^p_{L^p_\H}}\\
&\quad\quad\quad\quad+{\|\d_C^\H \mc P(\psi_j^\#(\chi_j\alpha))\|^p_{L^p_\H}+\|\mc P(\psi_j^\#(\d_C^M(\chi_j\alpha)))\|^p_{L^p_\H}}\Big)=:S_1+S_2.
\end{align*}
Now, if $0<\e<1$, applying Proposition \ref{prop.notcommdeltapull} to $S_2$, for every $\eta<\tilde\eta_\e$, we have
\begin{align*}
\|\d_C^\H \mc P(\psi_j^\#(\chi_j\alpha))\|^p_{L^p(B_\H(e,\eta/C_M),E_0^h)}&+\|\mc P(\psi_j^\#(\d_C^M(\chi_j\alpha)))\|^p_{L^p(B_\H(e,\eta/C_M),E_0^h)}\\
&\quad\quad\quad\leq\e^p c'_p\,\|\psi_j^\#(\chi_j\alpha)\|^p_{W^{2,p}(B_\H(e,\eta/C_M),E_0^h)},
\end{align*}where the constant $c_p'$ on the right hand side does not depend on $\eta$.
Using the estimates in Remark \ref{oss.stimecommdeltadC}, we can estimate the remaining term $S_1$ as
\begin{gather}\label{eq.timedeltaMast}
\begin{split}
\|\psi_j^\#(\Delta_{M,h}&(\chi_j\alpha))\|^p_{L^p(B_\H(e,\eta/C_M),E_0^h)}\leq \|\psi_j^\#(\chi_j\Delta_{M,h}\alpha)\|^p_{L^p(B_\H(e,\eta/C_M),E_0^h)}\\
&\quad+\|\psi_j^\#[\d_C^M\delta_C^M,\chi_j]\alpha\|^p_{L^p(B_\H(e,\eta/C_M),E_0^h)}+
\|\psi_j^\#[\delta_C^M\d_C^M,\chi_j]\alpha\|^p_{L^p(B_\H(e,\eta/C_M),E_0^h)}\\
&\leq\|\psi_j^\#(\chi_j\Delta_{M,h}\alpha)\|^p_{L^p(B_\H(e,\eta/C_M),E_0^h)}+c_{p,\eta }\,\sum_{j=1}^s\|\psi_j^\#(\chi_j\alpha)\|^p_{W^{1,p}(B_\H(e,\eta/C_M),E_0^h)},
\end{split}
\end{gather}where $c_{p,\eta}$ depends on $\eta$.
By Ehrling's inequality in $B_\H(e,\eta/C_M)$, if $0<\sigma<1$  (see Theorem \ref{teo.compactembHn}),  there exists a constant $c_\sigma>0$ such that
\begin{gather*}
\begin{split}
\|\psi_j^\#(\Delta_{M,h}&(\chi_j\alpha))\|^p_{L^p(B_\H(e,\eta/C_M),E_0^h)}\leq \,\|\psi_j^\#(\chi_j\Delta_{M,h}\alpha)\|^p_{L^p(B_\H(e,\eta/C_M),E_0^h)}
\\
&+c_{p,\eta}\,\sum_{j=1}^s\Big(\sigma^p\|\psi_j^\#(\chi_j\alpha)\|^p_{W^{2,p}(B_\H(e,\eta/C_M),E_0^h)}
+c^p_\sigma\|\psi_j^\#(\chi_j\alpha)\|^p_{L^p(B_\H(e,\eta/C_M),E_0^h)}\Big).
\end{split}
\end{gather*}
Putting together all the computations and recalling that $\psi_j^{-1}(\mathrm{supp}\,\chi_j)\subseteq B_\H(e,\eta/C_M)\subseteq B_\H(e,1)$, up to relabelling the constants, we have obtained that
\begin{gather*}
\begin{split}
\|\alpha\|^p_{W^{2,p}(M,E_0^h)}&\leq \,\sum_{j=1}^s\|\psi_j^\#(\chi_j\Delta_{M,h}\alpha)\|^p_{L^p(B_\H(e,1),E_0^h)}+\e^p c'_p\,\sum_{j=1}^s\|\psi_j^\#(\chi_j\alpha)\|^p_{W^{2,p}(B_\H(e,1),E_0^h)}\\
&\quad+c_{p, \eta}\,\sum_{j=1}^s\left(\sigma^p\|\psi_j^\#(\chi_j\alpha)\|^p_{W^{2,p}(B_\H(e,1),E_0^h)}+c^p_\sigma\|\psi_j^\#(\chi_j\alpha)\|^p_{L^{p}(B_\H(e,1),E_0^h)}\right)\\
&= \|\Delta_{M,h}\alpha\|^p_{L^p(M,E_0^h)}+\e^p c'_p\|\alpha\|^p_{W^{2,p}(M,E_0^h)}\\
&\quad\quad+c_{p, \eta}\,\left(\sigma^p\|\alpha\|^p_{W^{2,p}(M,E_0^h)}+c^p_\sigma\|\alpha\|^p_{L^p(M,E_0^h)}\right).
\end{split}
\end{gather*}
We choose $\e^p c'_p<1$ (recall that $c_p'$ is independent of $\eta$). Once we have fixed $\e$, we can fix also $\eta<\eta_\e$ and we can choose $\sigma<1$ such that $c_{p, \eta}\sigma^p<1$. In this way, we can reabsorb  terms with the $W^{2,p}$-norm of the last inequality, completing the proof.
\end{proof}



\section{$L^2$-Hodge decomposition theorems on compact contact manifolds}\label{HodgeL2}
In the sequel we always assume $M$ to be a compact contact manifold (again without boundary as above) and $1\le h\le 2n$.

Our proof of the $L^p$-{Hodge decomposition} pass preliminarly through an $L^2$-{Hodge decomposition}.
In this section, at first, we prove the existence and uniqueness of solutions of a potential of a form belonging to $L^2$ and orthogonal to the space of harmonic forms. Thanks to the global maximal hypoellipticity of $\Delta_M$, proved in  Theorem \ref{teo.globalmaxhypocont} we are able to obtain an $L^2$-{Hodge decomposition} involving Sobolev spaces. The proofs of some statements are postponed and we adress the interested reader to Appendix A.


From now on, for sake of simplicity, we denote the Rumin differential  and  co-differential respectively by $\d_C$ and $\delta_C$, dropping the superscript $M$.


\begin{defi}\label{defi.harmRform}
A form $\alpha\in L^1(M,E_0^h)$ is called \textit{harmonic} if $\Delta_{M,h}\alpha=0$ in the sense of distribution. 
\end{defi}
We define  the space of all $h$-harmonic forms as $\mathcal{H}^h:=\ker\,\Delta_{M,h}$ in the sense of distribution.

\begin{oss}\label{oss.hypoLaprum}
We observe that $\alpha\in L^1(M,E_0^h)$ is harmonic  if and only if $\alpha\in C^\infty(M,E_0^h)$ and $\alpha$ is harmonic in the usual sense, hence  $\mathcal{H}^h\subset C^\infty(M, E_0^h)$.
\end{oss}

For every $\alpha\in L^p(M,E_0^h), \beta\in L^{p'}(M,E_0^h)$, where $\frac{1}{p}+\frac{1}{p'}=1$ and $1<p<\infty$, we denote the inner product on $L^2(M,E_0^h)$, as usual, by
$$(\alpha,\beta):=\int_M\langle\alpha,\beta\rangle\,\d V=\int_M\alpha\wedge\ast\beta.$$
We can now give a characterization of the harmonicity of a form.
\begin{prop}\label{prop.characharmRum}
A form $\alpha\in L^1(M,E_0^h)$ is harmonic if and only if $\d_C\alpha=\delta_C\alpha=0$. Moreover, 
if $\alpha$ is an $n$-form, then the harmonicity of $\alpha$ is equivalent to $\d_C\delta_C\alpha=\d_C\alpha=0$.
On the other hand, if $\alpha$ is an $(n+1)$-form, then the harmonicity of $\alpha$ is equivalent to $\delta_C\d_C\alpha=\delta_C\alpha=0$.
\end{prop}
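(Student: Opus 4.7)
The plan is to prove the two directions separately, using Remark \ref{oss.hypoLaprum} to reduce immediately to the smooth case so that integration by parts (without boundary terms) is legitimate via the formal $L^2$-adjointness of $\d_C$ and $\delta_C$ on the closed manifold $M$.

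For the \emph{only if} direction, assume $\Delta_{M,h}\alpha=0$. By Remark \ref{oss.hypoLaprum}, $\alpha\in C^\infty(M,E_0^h)$, so $(\d_C\beta,\gamma)=(\beta,\delta_C\gamma)$ is genuine $L^2$-adjointness for smooth sections. For $h\neq n,n+1$, pairing $\Delta_{M,h}\alpha$ with $\alpha$ and using \eqref{defi.RuminLapcontman intro} gives
\[
0=(\Delta_{M,h}\alpha,\alpha)=(\delta_C\d_C\alpha,\alpha)+(\d_C\delta_C\alpha,\alpha)=(\d_C\alpha,\d_C\alpha)+(\delta_C\alpha,\delta_C\alpha),
\]
so both $\d_C\alpha=0$ and $\delta_C\alpha=0$. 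For $h=n$, I would exploit the self-adjointness of $\d_C\delta_C$ (since $(\d_C\delta_C)^\ast=\delta_C^\ast\d_C^\ast=\d_C\delta_C$) to get
\[
0=(\Delta_{M,n}\alpha,\alpha)=((\d_C\delta_C)^2\alpha,\alpha)+(\delta_C\d_C\alpha,\alpha)=(\d_C\delta_C\alpha,\d_C\delta_C\alpha)+(\d_C\alpha,\d_C\alpha),
\]
yielding $\d_C\delta_C\alpha=0$ and $\d_C\alpha=0$, which is the ``moreover'' characterization. To recover $\delta_C\alpha=0$ (and hence the first characterization in this degree) I would integrate by parts once more: $(\delta_C\alpha,\delta_C\alpha)=(\d_C\delta_C\alpha,\alpha)=0$. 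The case $h=n+1$ is strictly dual, with $\delta_C\d_C$ self-adjoint playing the role of $\d_C\delta_C$; one first extracts $\delta_C\d_C\alpha=0$ and $\delta_C\alpha=0$, and then deduces $\d_C\alpha=0$ from $(\d_C\alpha,\d_C\alpha)=(\delta_C\d_C\alpha,\alpha)=0$.

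For the \emph{if} direction, each converse is immediate from \eqref{defi.RuminLapcontman intro}. If $\d_C\alpha=\delta_C\alpha=0$, every term of $\Delta_{M,h}\alpha$ vanishes by inspection, regardless of $h$. If $h=n$ and $\d_C\delta_C\alpha=\d_C\alpha=0$, then
\[
\Delta_{M,n}\alpha=\d_C\delta_C(\d_C\delta_C\alpha)+\delta_C(\d_C\alpha)=0,
\]
and symmetrically for $h=n+1$ under $\delta_C\d_C\alpha=\delta_C\alpha=0$.

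No genuine obstacle is anticipated; the only points requiring attention are (i) invoking smoothness via Remark \ref{oss.hypoLaprum} before doing any integration by parts, since the hypothesis $\alpha\in L^1$ alone would not allow pairing against $\Delta_{M,h}\alpha$, and (ii) performing the second integration by parts in the 4th-order degrees to upgrade ``$\d_C\delta_C\alpha=0$'' to ``$\delta_C\alpha=0$'' (and dually for $h=n+1$), which is precisely the step that uses compactness of $M$ and the absence of boundary.
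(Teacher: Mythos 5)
Your proposal is correct and follows essentially the same route as the paper: reduce to the smooth case via hypoellipticity, pair $\Delta_{M,h}\alpha$ with $\alpha$ and integrate by parts to obtain $\|\d_C\delta_C\alpha\|_{L^2}^2+\|\d_C\alpha\|_{L^2}^2=0$ in degree $n$ (and dually in degree $n+1$), then recover $\delta_C\alpha=0$ from $(\delta_C\alpha,\delta_C\alpha)=(\d_C\delta_C\alpha,\alpha)=0$. The only difference is that you spell out the $h\neq n,n+1$ case explicitly, which the paper delegates to the Riemannian argument in Jost.
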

\begin{proof} By the hypoellipticity of $\Delta_M$, we can assume that $\alpha\in C^\infty(M,E_0^h)$.
The case for $h\neq n, n+1$ is similar to the Riemannian case (see Lemma $3.3.5$ in \cite{jost}). Let us now prove the thesis for $h=n$.

If $\d_C\alpha=\delta_C\alpha=0$ or $\d_C\delta_C\alpha=\d_C\alpha=0$, then $\alpha$ is harmonic by  definition of $\Delta_M$. On the contrary, we have
\begin{align*}
(\Delta_M\alpha,\alpha)&=\Big(((\d_C\delta_C)^2+\delta_C\d_C)\alpha,\alpha\Big) =(\d_C\delta_C\d_C\delta_C\alpha,\alpha)+(\delta_C\d_C\alpha,\alpha) \\
&=\|\d_C\delta_C\alpha\|^2_{L^2(M,E_0^n)}+\|\d_C\alpha\|^2_{L^2(M,E_0^{n+1})}.
\end{align*}
Now, if $\alpha$ is harmonic, i.e.\,$\Delta_M\alpha=0$, hence $\|\d_C\delta_C\alpha\|^2_{L^2(M,E_0^n)}=\|\d_C\alpha\|^2_{L^2(M,E_0^{n+1})}=0$, and this implies that $\d_C\delta_C\alpha=\d_C\alpha=0$. Thus,
$$\|\delta_C\alpha\|^2_{L^2(M,E_0^{n-1})}=(\delta_C\alpha,\delta_C\alpha)=(\,\d_C\delta_C\alpha,\alpha)=0,$$
that implies also $\delta_C\alpha=0$. The case $h=n+1$ can be handled similarly.
\end{proof}

In the sequel, we shall
 write $C^\infty(M,E_0^{h+1})$ instead of $C_0^\infty(M,E_0^{h+1})$, since $M$ is compact and without boundary, and thus we do not need to restrict our considerations to compactly supported forms.
Occasionally, we shall also adopt the notation $W^{\ell,p}(M,E_0^h)$ without specifying each time that we are considering $\ell=1$ if $h\neq n,n+1$ and $\ell=2$ if $h=n$ or $n+1$. 

\medskip

 Since $M$ is compact, we will prove that $\mathcal{H}^h$ has finite dimension, for every $h$, see Proposition \ref{teo.findimkerlap} below. We begin with a remark.

\begin{prop}\label{prop.LpconvHharm}
For every $h$, $\mathcal{H}^h$ is closed with respect to the $L^p$ (weak) convergence.
\end{prop}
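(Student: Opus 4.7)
The plan is to exploit the distributional formulation of harmonicity together with the self-adjointness of $\Delta_{M,h}$ and the hypoellipticity noted in Remark \ref{oss.hypoLaprum}. Suppose $(\alpha_k)_k \subset \mathcal{H}^h$ and $\alpha_k \rightharpoonup \alpha$ weakly in $L^p(M,E_0^h)$. We want to show $\Delta_{M,h}\alpha = 0$ distributionally; once this is established, Remark \ref{oss.hypoLaprum} promotes $\alpha$ to a smooth harmonic form, so $\alpha \in \mathcal{H}^h$.

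To verify the distributional identity, I would fix an arbitrary test form $\psi \in C^\infty(M,E_0^h)$. Since $M$ is compact without boundary and $\Delta_{M,h}$ is self-adjoint with respect to the $L^2$ pairing (see the discussion before Theorem \ref{teo.globalmaxhypocont}), and since every $\alpha_k$ is smooth and harmonic, we have
\begin{equation*}
(\alpha_k,\Delta_{M,h}\psi) = (\Delta_{M,h}\alpha_k,\psi) = 0 \qquad \text{for every } k.
\end{equation*}
Because $M$ is compact, $\Delta_{M,h}\psi \in C^\infty(M,E_0^h) \subset L^{p'}(M,E_0^h)$ with $1/p + 1/p' = 1$, so $\Delta_{M,h}\psi$ is an admissible test element for weak $L^p$ convergence. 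Passing to the limit in the displayed equation yields
\begin{equation*}
(\alpha,\Delta_{M,h}\psi) = 0 \qquad \text{for every } \psi \in C^\infty(M,E_0^h),
\end{equation*}
which is exactly $\Delta_{M,h}\alpha = 0$ in the sense of distributions.

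Applying Remark \ref{oss.hypoLaprum}, we conclude that $\alpha \in C^\infty(M,E_0^h)$ and $\alpha \in \mathcal{H}^h$, proving the closedness. There is no genuine obstacle here: the only subtlety worth checking is that $\Delta_{M,h}\psi$ belongs to the predual of $L^p$, which is automatic from compactness of $M$; everything else is a direct consequence of the duality pairing, self-adjointness, and the hypoellipticity upgrade of distributional solutions.
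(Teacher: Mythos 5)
Your proof is correct and follows essentially the same strategy as the paper: pass to the limit in a distributional identity using the weak $L^p$ convergence against smooth test forms, then invoke hypoellipticity to upgrade $\alpha$ to a smooth harmonic form. The only cosmetic difference is that you test directly against $\Delta_{M,h}\psi$ via self-adjointness, whereas the paper uses the characterization $\d_C\alpha_k=\delta_C\alpha_k=0$ and tests against $\delta_C\varphi$ and $\d_C\psi$ separately; both yield $\Delta_{M,h}\alpha=0$ in the sense of distributions.
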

\begin{proof}
If $(\alpha_k)_{k\in\N}\subseteq\mathcal{H}^h$ is a sequence that converges (weakly) to $\alpha$ in $L^p(M,E_0^h)$, then, by Proposition \ref{prop.characharmRum}, we have
\begin{align*}
0=(\d_C\alpha_k,\varphi)=(\alpha_k,\delta_C\varphi)\to(\alpha,\delta_C\varphi),\quad\quad\forall\,\varphi\in C^\infty(M,E_0^{h+1}), \\
0=(\delta_C\alpha_k,\psi)=(\alpha_k,\d_C\psi)\to(\alpha,\d_C\psi),\quad\quad\forall\,\psi\in C^\infty(M,E_0^{h-1}).
\end{align*}
Hence $\Delta_{M,h}\alpha=0$ in distributional sense, that, by Remark \ref{oss.hypoLaprum}, implies $\alpha\in\mathcal{H}^h$.
\end{proof}

The following results extends to our setting a result stated in \cite{morrey} (see Theorem $7.3.1$ therein).
\begin{prop}\label{teo.findimkerlap}
$\mathcal{H}^\bullet$ has finite dimension.
\end{prop}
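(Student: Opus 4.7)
The plan is to adapt the classical Riesz-compactness argument: show that the closed unit ball of each $\mathcal{H}^h$ in $L^2(M,E_0^h)$ is compact, so that $\mathcal{H}^h$ is finite-dimensional, and then sum over the finitely many degrees $h=0,\dots,2n+1$ to conclude that $\mathcal{H}^\bullet$ is finite-dimensional.

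Let $(\alpha_k)_{k\in\mathbb{N}} \subseteq \mathcal{H}^h$ be a sequence with $\|\alpha_k\|_{L^2(M,E_0^h)}\le 1$. By Remark \ref{oss.hypoLaprum} each $\alpha_k$ is smooth, and by Proposition \ref{prop.characharmRum} we have
\begin{itemize}
\item $\d_C\alpha_k = \delta_C\alpha_k = 0$ when $h\neq n, n+1$;
\item $\d_C\alpha_k=0$ and $\d_C\delta_C\alpha_k=0$ when $h=n$;
\item $\delta_C\alpha_k=0$ and $\delta_C\d_C\alpha_k=0$ when $h=n+1$.
\end{itemize}
Applying the Gaffney-type inequality of Theorem \ref{teo.GaffIneqcontman} with $p=2$ (in the appropriate one of its three forms), all the differential terms on the right-hand side vanish and we obtain
$$\|\alpha_k\|_{W^{\ell,2}(M,E_0^h)} \le C\,\|\alpha_k\|_{L^2(M,E_0^h)} \le C,$$
where $\ell=1$ if $h\neq n,n+1$ and $\ell=2$ if $h=n$ or $h=n+1$. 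Thus $(\alpha_k)$ is bounded in $W^{\ell,2}(M,E_0^h)$.

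By the compact embedding $W^{\ell,2}(M,E_0^h) \hookrightarrow L^2(M,E_0^h)$ established for compact contact manifolds in Remark \ref{teo.compactembcontact}, we can extract a subsequence $(\alpha_{k_j})$ which converges in $L^2(M,E_0^h)$ to some form $\alpha$. By Proposition \ref{prop.LpconvHharm}, $\alpha \in \mathcal{H}^h$. This proves that the closed unit ball of $\mathcal{H}^h$ with respect to the $L^2$-norm is compact; by Riesz's lemma, $\dim \mathcal{H}^h < \infty$. Since $\mathcal{H}^\bullet = \bigoplus_{h=0}^{2n+1}\mathcal{H}^h$ is a finite direct sum of finite-dimensional spaces, the conclusion follows.

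The only nontrivial input is the Gaffney inequality, which we have imported from \cite{BTT}: once it is available, the argument is the standard Riesz-compactness scheme, with the subtlety (which explains the split cases) that in middle degrees $h=n,n+1$ the Rumin Laplacian has order $4$, so we only get $W^{2,2}$-control rather than $W^{1,2}$-control; fortunately Remark \ref{teo.compactembcontact} provides the required compact embedding in either case.
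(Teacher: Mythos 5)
Your argument is correct and rests on exactly the same two pillars as the paper's own proof (given in Appendix~A, Proposition~\ref{teo.findimkerlap-appendice}): the Gaffney inequality of Theorem~\ref{teo.GaffIneqcontman}, which bounds harmonic forms in $W^{\ell,2}$ by their $L^2$ norm once Proposition~\ref{prop.characharmRum} kills the differential terms, and the compact embedding of Remark~\ref{teo.compactembcontact}, which makes an infinite $L^2$-orthonormal family of harmonic forms impossible. The paper packages this as a Morrey-style construction of an orthonormal minimizing sequence for the norm $||\cdot||_G$ and derives the same contradiction, whereas you state it directly as compactness of the unit ball plus Riesz's lemma; the substance is identical.
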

The proof can be found in the Appendix A.

For any $h$, we  define the orthogonal complement of harmonic space as
$$\left(\mathcal{H}^h\right)^\bot:=\left\{\alpha\in L^1(M,E_0^h)\,\,|\,\,(\alpha,\beta)=0,\quad\text{for all $\beta\in\mathcal{H}^h$}\right\}.$$

Since $\mathcal{H}^h$ is finite-dimensional, the orthogonal $L^2$-decomposition follows:
$$L^2(M,E_0^h)=\mathcal{H}^h\oplus\left(\left(\mathcal{H}^h\right)^\bot\cap L^2(M,E_0^h)\right)\,.$$
 We call  the orthogonal projection of $L^2$ over $\mathcal{H}^h$, 
$$H:L^2(M,E_0^h)\longrightarrow \mathcal{H}^h,$$
the harmonic projection (which is unique). 

Given a  form in $\left(\mathcal{H}^\bullet\right)^\bot\cap L^2(M,E_0^\bullet)$ we prove the existence of a potential adopting the approach used by Morrey in \cite{morrey}. As a corollary,  we recover the Hodge decomposition for smooth form stated by Rumin in \cite{rumin_jdg}, Corollary p.290.
\begin{teo}\label{teo.733morreyexistence}
Let $\alpha\in L^2(M,E_0^h)$ such that $\alpha$ is orthogonal to $\mathcal{H}^h$. Thus,
\begin{enumerate}
\item if $h\neq n,n+1$, there exists a unique form $\gamma\in W^{1,2}(M,E_0^h)\cap\left(\mathcal{H}^h\right)^\bot$ such that
\begin{equation}\label{eq.soldeboleLaphodgegen}
(\d_C\gamma,\d_C\zeta)+(\delta_C\gamma,\delta_C\zeta)=(\alpha,\zeta),\quad\quad\forall\,\zeta\in W^{1,2}(M,E_0^h);
\end{equation}
\item if $h=n$, there exists a unique form $\gamma\in W^{2,2}(M,E_0^n)\cap\left(\mathcal{H}^n\right)^\bot$ such that
\begin{equation}\label{eq.soldeboleLaphodgen}
(\d_C\delta_C\gamma,\d_C\delta_C\zeta)+(\d_C\gamma,\d_C\zeta)=(\alpha,\zeta),\quad\quad\forall\,\zeta\in W^{2,2}(M,E_0^n);
\end{equation}
\item if $h=n+1$, there exists a unique form $\gamma\in W^{2,2}(M,E_0^{n+1})\cap\left(\mathcal{H}^{n+1}\right)^\bot$ such that
\begin{equation}\label{eq.soldeboleLaphodgenp1}
(\delta_C\d_C\gamma,\delta_C\d_C\zeta)+(\delta_C\gamma,\delta_C\zeta)=(\alpha,\zeta),\quad\quad\forall\,\zeta\in W^{2,2}(M,E_0^{n+1}).
\end{equation}
\end{enumerate}

\end{teo}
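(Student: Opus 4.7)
The plan is to apply the Lax--Milgram theorem on the closed subspace
\[
V := W^{\ell,2}(M, E_0^h) \cap \left(\mathcal{H}^h\right)^{\bot},
\]
of $W^{\ell,2}(M,E_0^h)$, where $\ell=1$ in case (1) and $\ell=2$ in cases (2)--(3). The space $V$ is a Hilbert space since $\mathcal{H}^h$ is finite-dimensional (Proposition \ref{teo.findimkerlap}) and $L^2$-closed (Proposition \ref{prop.LpconvHharm}). Define the symmetric bilinear form $B(\gamma,\zeta)$ as the left-hand side of the corresponding equation \eqref{eq.soldeboleLaphodgegen}, \eqref{eq.soldeboleLaphodgen}, or \eqref{eq.soldeboleLaphodgenp1}. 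Continuity of $B$ on $V\times V$ is immediate from Cauchy--Schwarz in $L^2$, together with the fact that each factor appearing in $B$ is controlled by the $W^{\ell,2}$-norm.

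The core step is coercivity: I need $c>0$ such that $B(\gamma,\gamma)\ge c\|\gamma\|_{W^{\ell,2}}^2$ for all $\gamma\in V$. The Gaffney inequality of Theorem \ref{teo.GaffIneqcontman}, applied with $p=2$, immediately gives
\[
\|\gamma\|_{W^{\ell,2}(M,E_0^h)}^2 \le C\bigl(B(\gamma,\gamma)+\|\gamma\|_{L^2(M,E_0^h)}^2\bigr),
\]
so it suffices to establish a Poincar\'e-type inequality
\[
\|\gamma\|_{L^2(M,E_0^h)}^2 \le C'\,B(\gamma,\gamma) \quad \text{for all } \gamma\in V.
\]
I would argue by contradiction: if such a $C'$ did not exist, there would be a sequence $(\gamma_k)\subset V$ with $\|\gamma_k\|_{L^2}=1$ and $B(\gamma_k,\gamma_k)\to 0$. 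By Gaffney, $(\gamma_k)$ is bounded in $W^{\ell,2}$, so by the compact embedding $W^{\ell,2}(M,E_0^h)\hookrightarrow L^2(M,E_0^h)$ (Remark \ref{teo.compactembcontact}), a subsequence converges in $L^2$ to some $\gamma$ with $\|\gamma\|_{L^2}=1$. The limit $\gamma$ lies in $\left(\mathcal{H}^h\right)^\bot$ since this space is $L^2$-closed. Testing $\d_C\gamma_k,\delta_C\gamma_k$ (and $\d_C\delta_C\gamma_k$ or $\delta_C\d_C\gamma_k$ in cases (2)--(3)) against smooth forms and passing to the limit shows that the vanishing quantities defining $B(\gamma,\gamma)$ are also zero for $\gamma$; Proposition \ref{prop.characharmRum} then forces $\gamma\in\mathcal{H}^h$, contradicting $\gamma\in\left(\mathcal{H}^h\right)^\bot\setminus\{0\}$.

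With coercivity in hand, Lax--Milgram produces a unique $\gamma\in V$ satisfying $B(\gamma,\zeta)=(\alpha,\zeta)$ for every $\zeta\in V$. To extend the identity to arbitrary $\zeta\in W^{\ell,2}(M,E_0^h)$, I would use the orthogonal decomposition $\zeta=\zeta_0+h$ with $\zeta_0\in V$ and $h\in\mathcal{H}^h$: by Proposition \ref{prop.characharmRum} harmonic forms are annihilated by $\d_C$ and $\delta_C$, so $B(\gamma,h)=0$, while $(\alpha,h)=0$ by hypothesis on $\alpha$. Uniqueness in $V$ is immediate: if $B(\gamma,\zeta)=0$ for all $\zeta\in V$, then choosing $\zeta=\gamma$ and using coercivity gives $\gamma=0$.

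The main obstacle is the Poincar\'e-type inequality on $\left(\mathcal{H}^h\right)^\bot\cap W^{\ell,2}$; everything else is a routine Hilbert-space variational argument. This step is where the compactness of $M$ is decisively used (via the compact Sobolev embedding) together with the full strength of the Gaffney inequality of Theorem \ref{teo.GaffIneqcontman} --- in particular, the bespoke $W^{2,2}$-versions \eqref{eq.GaffIneqn} and \eqref{eq.GaffIneqnp1} are exactly what is needed to treat the critical degrees $h=n$ and $h=n+1$, where the Laplacian has order $4$.
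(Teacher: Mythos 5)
Your proposal is correct and is essentially the same argument as the paper's: replacing the direct minimization of the Dirichlet functional $I(\beta)=||D\beta||^2-2(\beta,\alpha)$ (Theorem \ref{teo.733morreyexistence-app} in Appendix A) by Lax--Milgram for the associated symmetric coercive bilinear form is only a repackaging, since both hinge on exactly the same two ingredients --- the Gaffney inequality of Theorem \ref{teo.GaffIneqcontman} and the Poincar\'e-type inequality on $W^{\ell,2}\cap\left(\mathcal{H}^h\right)^\bot$, which you establish by the identical compactness-and-contradiction argument used in Theorem \ref{teo.scottstimaperpoinc}. Your handling of the extension from test forms in $V$ to all of $W^{\ell,2}(M,E_0^h)$ via the harmonic projection, and of uniqueness via coercivity, is also sound.
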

The proof of this result is contained in the Appendix A.
The form $\gamma$ in the previous theorem, which is uniquely determined by $\alpha$, is called the \textit{potential of $\alpha$}.
By the hypoellipticity of $\Delta_M$, we have:
\begin{coro}\label{coro.esistsolCinf}
Let $\alpha\in C^\infty(M,E_0^h)$ such that $\alpha$ is orthogonal to $\mathcal{H}^h$. Then, there exists a unique form $\gamma\in C^\infty(M,E_0^h)\cap\left(\mathcal{H}^h\right)^\bot$ such that $\Delta_{M, h}\gamma=\alpha$.
\end{coro}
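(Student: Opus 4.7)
The plan is to produce $\gamma$ by invoking Theorem \ref{teo.733morreyexistence} and then upgrade its regularity via the hypoellipticity of $\Delta_M$ recorded in Remark \ref{oss.hypoLaprum}. First I would apply Theorem \ref{teo.733morreyexistence} to the datum $\alpha$ to obtain a unique potential $\gamma\in(\mathcal{H}^h)^\bot$ belonging either to $W^{1,2}(M,E_0^h)$ (when $h\neq n,n+1$) or to $W^{2,2}(M,E_0^h)$ (when $h\in\{n,n+1\}$), satisfying the corresponding variational identity.

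The next step is to reinterpret this variational identity as $\Delta_{M,h}\gamma=\alpha$ in the sense of distributions. In the case $h\neq n,n+1$, choosing $\zeta\in C^\infty(M,E_0^h)\subset W^{1,2}(M,E_0^h)$ in \eqref{eq.soldeboleLaphodgegen} and transferring $\d_C$ and $\delta_C$ onto the smooth test form $\zeta$ (by the $L^2$-adjointness $(\d_C)^*=\delta_C$, which is legitimate here since $\zeta$ is smooth and $M$ has no boundary) yields
\begin{equation*}
(\gamma,\Delta_{M,h}\zeta)=(\alpha,\zeta),\qquad\forall\,\zeta\in C^\infty(M,E_0^h),
\end{equation*}
so that $\Delta_{M,h}\gamma=\alpha$ distributionally. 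For $h=n$ I would test \eqref{eq.soldeboleLaphodgen} against $\zeta\in C^\infty(M,E_0^n)$ and move the operators $(\d_C\delta_C)^2$ and $\delta_C\d_C$ onto $\zeta$ to obtain $(\gamma,\Delta_{M,n}\zeta)=(\alpha,\zeta)$; the case $h=n+1$ is analogous, starting from \eqref{eq.soldeboleLaphodgenp1}.

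Once this distributional identity is established, since $\alpha\in C^\infty(M,E_0^h)$ and $\Delta_{M,h}$ is hypoelliptic (Remark \ref{oss.hypoLaprum}), we conclude that $\gamma\in C^\infty(M,E_0^h)$, giving the desired smooth solution of $\Delta_{M,h}\gamma=\alpha$. Uniqueness is then immediate: if $\gamma_1,\gamma_2$ are two such solutions in $(\mathcal{H}^h)^\bot$, then $\gamma_1-\gamma_2\in\ker\Delta_{M,h}=\mathcal{H}^h$, so $\gamma_1-\gamma_2\in\mathcal{H}^h\cap(\mathcal{H}^h)^\bot=\{0\}$.

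I do not expect a genuine obstacle here; the one point requiring mild care is the integration by parts used to move $\d_C$ and $\delta_C$ (or their compositions, for $h=n,n+1$) from $\gamma$ onto $\zeta$ in the weak formulation, but since the test form $\zeta$ is smooth on the compact boundaryless manifold $M$, this is a routine application of the definition of $\delta_C$ as the formal $L^2$-adjoint of $\d_C$.
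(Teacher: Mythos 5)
Your argument is correct and is essentially the paper's own proof: the paper derives this corollary from Theorem \ref{teo.733morreyexistence} precisely "by the hypoellipticity of $\Delta_M$", i.e.\ by reading the variational identity as the distributional equation $\Delta_{M,h}\gamma=\alpha$ and then upgrading the weak solution to a smooth one, with uniqueness following from $\mathcal{H}^h\cap(\mathcal{H}^h)^\bot=\{0\}$. Your explicit verification of the integration by parts onto smooth test forms fills in exactly the routine step the paper leaves implicit.
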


As shown in Appendix A (see Remark \ref{oss.contweaksolL2-app}), we can observe:
\begin{oss}\label{oss.contweaksolL2}  
the operator, that for every $\alpha\in L^2(M,E_0^h)\cap\left(\mathcal{H}^h\right)^\bot$ associates $\gamma\in W^{\ell,2}(M,E_0^h)\cap\left(\mathcal{H}^h\right)^\bot$, is a continuous operator and   we have
\begin{equation}\label{eq.contL2weksoll2}
\|\gamma\|_{W^{\ell,2}(M,E_0^h)}\leq\frac{2}{C}\|\alpha\|_{L^2(M,E_0^h)},\quad\quad\forall\,\alpha\in L^2(M,E_0^h)\cap\left(\mathcal{H}^h\right)^\bot,
\end{equation}
where  $\ell=1$ if $h\neq n,n+1$ and $\ell=2$ for $h= n,n+1$
\end{oss}
Now we prove the following estimates for smooth forms.
\begin{oss}\label{passo preliminare}
If $\gamma\in C^\infty(M,E_0^h)\cap\left(\mathcal{H}^h\right)^\bot$ is the solution of $\Delta_{M,h}\gamma=\alpha\in C^\infty(M,E_0^h)\cap\left(\mathcal{H}^h\right)^\bot$, then 
\begin{equation}\label{eq.stimaCinfweak}
\|\gamma\|_{W^{2\ell,2}(M,E_0^h)}\leq C\|\alpha\|_{L^2(M,E_0^h)},
\end{equation}
where $\ell=2$ if $h=n,n+1$, and $\ell=1$ otherwise.
\end{oss}
\begin{proof}
By Theorem \ref{teo.globalmaxhypocont}, we have
\begin{align*}
\|\gamma\|_{W^{2\ell,2}}\leq C\Big(\|\Delta_{M,h}\gamma\|_{L^2}+\|\gamma\|_{L^2}\Big)=C\Big(\|\alpha\|_{L^2}+\|\gamma\|_{L^2}\Big)\leq C\Big(\|\alpha\|_{L^2}+\|\gamma\|_{W^{\ell,2}}\Big).
\end{align*}
Using the inequality \eqref{eq.contL2weksoll2}, we obtain
$
\|\gamma\|_{W^{2\ell,2}}\leq C\|\alpha\|_{L^2},
$
and we have done.
\end{proof}

\subsection{Hodge Theorems for $C^\infty$-Rumin forms}\label{sec:finalcap}
We define the following operator.
\begin{defi}\label{defi.GreenopCinf}
We define the operator
$$G:C^\infty(M,E_0^h)\longrightarrow C^\infty(M,E_0^h)\cap\left(\mathcal{H}^h\right)^\bot,$$
such that $G(\alpha)$ is the unique solution of the $\Delta_{M,h}G(\alpha)=\alpha-H(\alpha)$ as in Corollary \ref{coro.esistsolCinf}. 
\end{defi}
We can refer to $G$ as the Green's operator associated to $\Delta_M$ (see e.g. \cite{warner}).

As noticed by Rumin (see Corollary p. 290 in \cite{rumin_jdg}), the Laplacian on $M$ induces an orthogonal decomposition on smooth compactly supported forms, and the cohomology of $M$ is represented by harmonic forms. Here, we prove a slightly general decomposition.
\begin{coro}[see \cite{rumin_jdg}]\label{teo.HodgedecthCinf}
We have the decomposition 
\begin{equation}\label{eq.decompsmoothform}
C^\infty(M,E_0^h)=\mathcal{H}^h\oplus\d_C\Big(C^\infty(M,E_0^{h-1})\Big)\oplus\delta_C\Big(C^\infty(M,E_0^{h+1})\Big).
\end{equation}
Moreover, we can write
\begin{equation}\label{eq.decompCinfGhodge}
C^\infty(M,E_0^h)=\mathcal{H}^h\oplus\left\{
\begin{array}{ll}
\d_C\delta_C G\left(C^\infty(M,E_0^h)\right)\oplus\delta_C\d_C G\left(C^\infty(M,E_0^h)\right),&\text{if $h\neq n,n+1$},\\
\d_C\delta_C\d_C\delta_C G\left(C^\infty(M,E_0^h)\right)\oplus\delta_C\d_C G\left(C^\infty(M,E_0^h)\right),&\text{if $h=n$},\\
\d_C\delta_C G\left(C^\infty(M,E_0^h)\right)\oplus\delta_C\d_C\delta_C\d_C G\left(C^\infty(M,E_0^h)\right),&\text{if $h=n+1$}.
\end{array}
\right. 
\end{equation}
\end{coro}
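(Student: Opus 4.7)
The plan is to derive both decompositions simultaneously by applying the Green operator $G$ introduced in Definition \ref{defi.GreenopCinf} and then unpacking the definition of $\Delta_{M,h}$ in the three cases $h\neq n,n+1$, $h=n$, $h=n+1$. Concretely, for any $\alpha\in C^\infty(M,E_0^h)$ I would first write the pointwise identity
\begin{equation*}
\alpha=H(\alpha)+\Delta_{M,h}G(\alpha),
\end{equation*}
which is the defining property of $G$. Substituting the expression \eqref{defi.RuminLapcontman intro} for $\Delta_{M,h}$ gives directly the three variants of \eqref{eq.decompCinfGhodge}: for instance when $h=n$ one has
$\Delta_{M,h}G(\alpha)=\d_C\delta_C\d_C\delta_C G(\alpha)+\delta_C\d_C G(\alpha)$,
where the first summand lies in $\d_C(C^\infty(M,E_0^{h-1}))$ and the second in $\delta_C(C^\infty(M,E_0^{h+1}))$, since by Corollary \ref{coro.esistsolCinf} we know $G(\alpha)\in C^\infty$. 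The analogous unpacking in the other two cases is immediate, and yields \eqref{eq.decompsmoothform} after forgetting the explicit form of the potentials.

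The only substantial point is to check that the three summands in \eqref{eq.decompsmoothform} form a \emph{direct} sum, i.e.\ that the decomposition is $L^2$-orthogonal. This is a standard integration-by-parts argument using Proposition \ref{prop.characharmRum}: if $\eta\in\mathcal H^h$ then $\d_C\eta=\delta_C\eta=0$, so for any $\beta\in C^\infty(M,E_0^{h-1})$ and $\gamma\in C^\infty(M,E_0^{h+1})$ one gets
\begin{equation*}
(\eta,\d_C\beta)=(\delta_C\eta,\beta)=0,\qquad (\eta,\delta_C\gamma)=(\d_C\eta,\gamma)=0,
\end{equation*}
while $(\d_C\beta,\delta_C\gamma)=(\d_C^2\beta,\gamma)=0$ by $\d_C^2=0$. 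Thus the three spaces are mutually orthogonal in $L^2$, and in particular a smooth form cannot be written in two ways as a sum of elements from them.

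The main (only) obstacle is really a bookkeeping one: one has to be careful when $h=n$ or $h=n+1$ that the potential appearing in the refined decomposition \eqref{eq.decompCinfGhodge} is honestly a smooth Rumin form of the appropriate degree, so that the summands belong respectively to $\d_C(C^\infty(M,E_0^{h-1}))$ and $\delta_C(C^\infty(M,E_0^{h+1}))$. This is guaranteed by Corollary \ref{coro.esistsolCinf}, which places $G(\alpha)$ in $C^\infty(M,E_0^h)$, combined with the fact that $\d_C,\delta_C$ preserve smoothness. Finally, the orthogonality of $H(\alpha)$ and $\Delta_{M,h}G(\alpha)$ in \eqref{eq.decompCinfGhodge} follows from $G(\alpha)\in(\mathcal H^h)^\bot$ and the self-adjointness of $\Delta_{M,h}$ together with $\mathcal H^h=\ker\Delta_{M,h}$, which also ensures that the refined decomposition specializes the coarse one \eqref{eq.decompsmoothform}.
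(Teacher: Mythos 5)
Your proposal is correct and follows essentially the same route as the paper: both write $\alpha=H(\alpha)+\Delta_{M,h}G(\alpha)$, unpack $\Delta_{M,h}$ case by case to exhibit the exact and coexact potentials (which are smooth by Corollary \ref{coro.esistsolCinf}), and verify $L^2$-orthogonality via Proposition \ref{prop.characharmRum} and $\d_C^2=0$. No gaps.
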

%
%
\begin{proof}
Let $\alpha\in C^\infty(M,E_0^h)$, let $H(\alpha)$ be the harmonic projection of $\alpha$ over $\mathcal{H}^h$ and let $G(\alpha)\in C^\infty(M,E_0^h)\cap\left(\mathcal{H}^h\right)^\bot$ be as in Definition \ref{defi.GreenopCinf}.
Using the explicit expression of $\Delta_M$ given in \eqref{defi.RuminLapcontman intro}, we easily infer that $\alpha=H(\alpha)+\Delta_{M,h}G(\alpha)$ can be written as
$$
\alpha=H(\alpha)+\delta_C\zeta+\d_C\omega,
$$
where  $\zeta\in C^\infty(M,E_0^{h+1})$ and $\omega\in C^\infty(M,E_0^{h-1})$ are defined as follow: 
$$\zeta=\d_CG(\alpha)\quad \text{for $h\neq n+1$, whereas} \quad \zeta=\d_C\delta_C\d_CG(\alpha)\ \text{for $h=n+1$;} $$
and similarly,  
 $$\omega=\delta_CG(\alpha)\quad \text{for  $h\neq n,$ whereas}\quad \ \omega=\delta_C\d_C\delta_CG(\alpha)\ \text{for $h=n$}.$$
 Thus, we have the decomposition  \eqref{eq.decompsmoothform} which can be summarized in the following expression
\begin{equation}\label{eq.splitformecinfty}
\alpha=H(\alpha)+
\left\{
\begin{array}{ll}
\delta_C{\left(\d_CG(\alpha)\right)}+\d_C{\left(\delta_CG(\alpha)\right)},&\text{if $h\neq n,n+1$},\\
\delta_C{\left(\d_CG(\alpha)\right)}+\d_C{\left(\delta_C\d_C\delta_CG(\alpha)\right)},&\text{if $h=n$},\\
\delta_C{\left(\d_C\delta_C\d_CG(\alpha)\right)}+\d_C{\left(\delta_CG(\alpha)\right)},&\text{if $h=n+1$},
\end{array}
\right. 
\end{equation}
We want to prove the orthogonality in \eqref{eq.decompsmoothform}. By Proposition \ref{prop.characharmRum}, keeping in mind that $\d_CH(\alpha)=\delta_C H(\alpha)=0$, we have
\begin{align*}
(H(\alpha)&,\delta_C\zeta)=({\d_CH(\alpha)},\zeta)=0,\quad (H(\alpha),\d_C\omega)=({\delta_CH(\alpha)},\omega)=0, \\
&\text{and}\quad\quad\quad(\delta_C\zeta,\d_C\omega)=(\zeta,\d_C^2\omega)=0.
\end{align*}
Hence, $\d_C\Big(C^\infty(M,E_0^{h-1})\Big), \delta_C\Big(C^\infty(M,E_0^{h+1})\Big)$ and $\mathcal{H}^h$ are orthogonal to each other, completing the proof.
\end{proof}

\medskip

\subsection{Green operator and Hodge theorems for $L^2$-Rumin forms involving Sobolev spaces}\label{sec:finalcapL2}
Now we generalize the decompositions of smooth forms stated in Corollary \ref{teo.HodgedecthCinf} to the $L^2$-forms, in the same spirit of  the decompostion given in formula (1.2) by \cite{ISS}. 

The next statement contains a regularity result. Indeed, thanks to Theorem \ref{teo.globalmaxhypocont}, we show that the weak solutions obtained in Theorem \ref{teo.733morreyexistence} belong to $W^{2\ell,2}(M,E_0^h)$ (again, $\ell=2$ if $h=n,n+1$, and $\ell=1$ otherwise).  

\begin{teo}\label{teo.greenopregsol}Let $\ell=2$ if $h=n,n+1$, and $\ell=1$ otherwise.
If $\alpha\in L^2(M,E_0^h)\cap\left(\mathcal{H}^h\right)^\bot$ and $\gamma\in W^{\ell,2}(M,E_0^h)\cap\left(\mathcal{H}^h\right)^\bot$ is the weak solution  as in Theorem \ref{teo.733morreyexistence}, then $\gamma\in W^{2\ell,2}(M,E_0^h)\cap\left(\mathcal{H}^h\right)^\bot$ and 
\begin{equation}\label{32 bis}
\|\gamma\|_{W^{2\ell,2}}\leq C\|\alpha\|_{L^2}\,.
\end{equation}
\end{teo}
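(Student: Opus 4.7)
The strategy is a standard regularity bootstrap: use the a priori estimate \eqref{eq.stimaCinfweak} from Remark \ref{passo preliminare} for smooth data, then pass to the limit in $L^2$. The maximal hypoellipticity of Theorem \ref{teo.globalmaxhypocont} already delivers the $W^{2\ell,2}$ bound for smooth solutions, so the only real work is to approximate carefully and identify the limit with the weak solution $\gamma$ coming from Theorem \ref{teo.733morreyexistence}.

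First, I would build a smooth approximating sequence. Since $M$ is compact without boundary, $C^\infty(M,E_0^h)$ is dense in $L^2(M,E_0^h)$. Pick $\beta_k\in C^\infty(M,E_0^h)$ with $\beta_k\to\alpha$ in $L^2$ and set $\alpha_k:=\beta_k-H(\beta_k)$. By Remark \ref{oss.hypoLaprum} the harmonic projection lands in $C^\infty(M,E_0^h)$, so $\alpha_k\in C^\infty(M,E_0^h)\cap(\mathcal H^h)^\bot$; moreover $H$ is a bounded orthogonal projection, hence $\alpha_k\to\alpha-H(\alpha)=\alpha$ in $L^2$. By Corollary \ref{coro.esistsolCinf}, for each $k$ there is a unique $\gamma_k\in C^\infty(M,E_0^h)\cap(\mathcal H^h)^\bot$ solving $\Delta_{M,h}\gamma_k=\alpha_k$.

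Next, I would use the smooth estimate in Remark \ref{passo preliminare}, applied to $\gamma_k$ and to the differences $\gamma_k-\gamma_j$ (which also lie in $C^\infty\cap(\mathcal H^h)^\bot$ and solve $\Delta_{M,h}(\gamma_k-\gamma_j)=\alpha_k-\alpha_j$):
\begin{equation*}
\|\gamma_k-\gamma_j\|_{W^{2\ell,2}(M,E_0^h)}\le C\,\|\alpha_k-\alpha_j\|_{L^2(M,E_0^h)}.
\end{equation*}
Thus $(\gamma_k)$ is Cauchy in $W^{2\ell,2}(M,E_0^h)$, and converges to some $\tilde\gamma\in W^{2\ell,2}(M,E_0^h)$. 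Since each $\gamma_k\in(\mathcal H^h)^\bot$ and $L^2$-convergence preserves orthogonality to a finite-dimensional space, $\tilde\gamma\in(\mathcal H^h)^\bot$.

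Finally, I would identify $\tilde\gamma$ with the weak solution $\gamma$. For smooth $\gamma_k$ and any $\zeta$ in the appropriate $W^{\ell,2}$-space, integration by parts with $\Delta_{M,h}\gamma_k=\alpha_k$ yields exactly one of \eqref{eq.soldeboleLaphodgegen}--\eqref{eq.soldeboleLaphodgenp1} according to whether $h\ne n,n+1$, $h=n$ or $h=n+1$; e.g. for $h\ne n,n+1$,
\begin{equation*}
(\d_C\gamma_k,\d_C\zeta)+(\delta_C\gamma_k,\delta_C\zeta)=(\Delta_{M,h}\gamma_k,\zeta)=(\alpha_k,\zeta).
\end{equation*}
Since $\gamma_k\to\tilde\gamma$ in $W^{\ell,2}$ and $\alpha_k\to\alpha$ in $L^2$, passing to the limit shows that $\tilde\gamma$ satisfies the same weak formulation as $\gamma$. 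The uniqueness part of Theorem \ref{teo.733morreyexistence} forces $\tilde\gamma=\gamma$, so $\gamma\in W^{2\ell,2}(M,E_0^h)$; passing to the limit in $\|\gamma_k\|_{W^{2\ell,2}}\le C\|\alpha_k\|_{L^2}$ gives \eqref{32 bis}. The only delicate point is checking that the smooth approximants may indeed be chosen in $(\mathcal H^h)^\bot$ while staying smooth, which is why Remark \ref{oss.hypoLaprum} (smoothness of harmonic forms) plays an essential role; everything else is a direct combination of Remark \ref{passo preliminare}, Corollary \ref{coro.esistsolCinf} and the uniqueness clause of Theorem \ref{teo.733morreyexistence}.
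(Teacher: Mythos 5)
Your proof is correct and follows the same overall strategy as the paper: approximate $\alpha$ by smooth data in $(\mathcal{H}^h)^\bot$, solve $\Delta_{M,h}\gamma_k=\alpha_k$ smoothly via Corollary \ref{coro.esistsolCinf}, invoke the a priori bound of Remark \ref{passo preliminare}, pass to the limit, and identify the limit with $\gamma$ through the uniqueness clause of Theorem \ref{teo.733morreyexistence}. The one place where you genuinely diverge is the convergence step: by applying the linear estimate \eqref{eq.stimaCinfweak} to the differences $\gamma_k-\gamma_j$ you obtain a Cauchy sequence, hence \emph{strong} convergence in $W^{2\ell,2}$, whereas the paper only extracts a uniformly bounded sequence and then relies on the compact embedding $W^{2\ell,2}\hookrightarrow W^{\ell,2}$ of Remark \ref{teo.compactembcontact} together with weak compactness, getting strong convergence only in $W^{\ell,2}$ and weak convergence in $W^{2\ell,2}$. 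Your version is slightly cleaner (no compactness needed, and the final estimate \eqref{32 bis} passes to the limit directly rather than via lower semicontinuity or a density extension of the maximal hypoellipticity estimate); you also make explicit the construction $\alpha_k=\beta_k-H(\beta_k)$ of the smooth approximants in $(\mathcal{H}^h)^\bot$, a detail the paper leaves implicit and which indeed requires the smoothness of harmonic forms from Remark \ref{oss.hypoLaprum}.
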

\begin{proof}

 Let $(\alpha_k)_{k\in\N}\subseteq C^\infty(M,E_0^h)\cap\left(\mathcal{H}^h\right)^\bot$ such that $\alpha_k\to \alpha$ in $L^2$ and let $\gamma_k\in C^\infty(M,E_0^h)\cap\left(\mathcal{H}^h\right)^\bot$ be the unique solution of $\Delta_{M,h}\gamma_k=\alpha_k$ as in Corollary \ref{coro.esistsolCinf}, for every $k$. Then, by \eqref{eq.stimaCinfweak} applied to $\gamma_k$ and using the fact that $\alpha_k$ converges to $\alpha$ in $L^2$, we have
$$\|\gamma_k\|_{W^{2\ell,2}}\leq C\|\alpha_k\|_{L^2}\leq \|\alpha\|_{L^2}+1,\quad\quad\forall\,k\gg 1,$$
showing that $(\gamma_k)_{k\in\N}$ is a bounded sequence in $W^{2\ell,2}(M,E_0^h)\cap\left(\mathcal{H}^h\right)^\bot$. Thus, by Remark \ref{teo.compactembcontact}, up to pass to a subsequence and recalling that $\mathcal{H}^h$ is closed under $L^2$ convergence (see Proposition \ref{prop.LpconvHharm}), there exists $\gamma\in W^{2\ell,2}(M,E_0^h)\cap\left(\mathcal{H}^h\right)^\bot$ such that $\gamma_k\to \gamma$ in $W^{\ell,2}$ and $\gamma_k\rightharpoonup\gamma$ in $W^{2\ell,2}$.
We prove now that $\gamma$ is the weak solution of $\Delta_{M,h}\gamma=\alpha$, where this equality is meant  in the sense of Theorem \ref{teo.733morreyexistence}. 

Indeed, for example if $h\neq n,n+1$ (hence $\ell=1$), the differential operators $\d_C$ and $\delta_C$ have order $1$ that, by the strong convergence $\gamma_k\to\gamma$ in $W^{1,2}$, we have
$$\d_C\gamma_k\to\d_C\gamma\quad\text{and}\quad\delta_C\gamma_k\to\delta_C\gamma\quad\text{in $L^2$}$$
and
\begin{align*}
(\d_C\gamma,\d_C\zeta)+(\delta_C\gamma,\delta_C\zeta)&=\lim_{k\to+\infty}\,\Big((\d_C\gamma_k,\d_C\zeta)+(\delta_C\gamma_k,\delta_C\zeta)\Big)=\lim_{k\to+\infty}\,(\Delta_{M,h}\gamma_k,\zeta) \\
&=\lim_{k\to+\infty}\,(\alpha_k,\zeta)=(\alpha,\zeta),\quad\quad\forall\,\zeta\in W^{1,2}(M,E_0^h).
\end{align*}
Moreover, by construction, $\gamma\in W^{2,2}(M,E_0^h)$, showing the regularity of the weak solutions of Poisson's equation. Arguing analogously, we obtain the regularity for the cases $n, n+1$.

To prove that \eqref{32 bis} holds, we can argue as in the proof of Remark \ref{passo preliminare} since, 
 by a density argument, the estimate of maximal hypoellipticity in Theorem \ref{teo.globalmaxhypocont} is true also for forms in $W^{2\ell,2}(M,E_0^h)$. 
\end{proof}
Analogously to the case of smooth forms, we can define the  operator on $L^2$-forms as follow (see \cite{scott}).
\begin{defi}\label{defi.GreenopL2case}
We define the \textit{Green's operator}
$$G:L^2(M,E_0^h)\longrightarrow W^{2\ell,2}(M,E_0^h)\cap\left(\mathcal{H}^h\right)^\bot$$
such that $G(\alpha)$ is the unique solution of the Poisson's equation 
$$\Delta_{M,h}G(\alpha)=\alpha-H(\alpha)$$ where, again,  the last equality is meant 
as in Theorem \ref{teo.733morreyexistence}. We observe that $G$ is a linear continuous operator due to the estimate \eqref{eq.stimaCinfweak} in Theorem \ref{teo.greenopregsol}.
\end{defi}
The following lemma allows to obtain the uniqueness of the decomposition of $L^2$ forms in Theorem \ref{teo.HodgedecthL2Cinf} (see Lemma 7.4.1 in \cite{morrey} and also Lemma 6.3 in \cite{scott}).
\begin{lemma}\label{lemma.orthLpforms}
 We consider $s=2$ if $h=n+1$, and $s=1$ otherwise. In addition let $r=2$ if $h=n$ and $r=1$ otherwise. If $\alpha\in W^{s,2}(M,E_0^{h-1})$, $\beta\in W^{r,2}(M,E_0^{h+1})$ and $\xi\in\mathcal{H}^h$ such that 
\begin{equation}\label{uf}\d_C\alpha+\delta_C\beta+\xi=0\,,\end{equation}  then $\d_C\alpha=\delta_C\beta=\xi=0$.
\end{lemma}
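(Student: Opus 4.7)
The plan is to test the relation \eqref{uf} against each of the three summands, using smoothness of harmonic forms, the fact that $\d_C$ and $\delta_C$ are formal $L^2$-adjoints, and the relation $\d_C^2=0$ (equivalently $\delta_C^2=0$). Throughout, the Sobolev exponents $s$ and $r$ are chosen precisely so that $\d_C\alpha\in L^2(M,E_0^h)$ and $\delta_C\beta\in L^2(M,E_0^h)$, which is what makes the $L^2$ pairings below meaningful.

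First I take the $L^2$ inner product of \eqref{uf} with $\xi$. Since $\xi\in\mathcal{H}^h\subset C^\infty(M,E_0^h)$ (Remark \ref{oss.hypoLaprum}), and in all three cases $h\ne n,n+1$, $h=n$, $h=n+1$ the characterization in Proposition \ref{prop.characharmRum} yields $\d_C\xi=\delta_C\xi=0$. Hence, using the formal adjointness
\[
(\d_C\alpha,\xi)=(\alpha,\delta_C\xi)=0,\qquad (\delta_C\beta,\xi)=(\beta,\d_C\xi)=0,
\]
which extends from the smooth case to $\alpha\in W^{s,2}$, $\beta\in W^{r,2}$ by density (smooth forms are dense in $W^{s,2}(M,E_0^{h-1})$ and $W^{r,2}(M,E_0^{h+1})$ by the chart definition of Sobolev spaces together with Theorem \ref{teo.denseSobspHn}, since $M$ is compact). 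Taking the inner product of \eqref{uf} with $\xi$ therefore gives $\|\xi\|_{L^2}^2=0$, so $\xi=0$.

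Next, with $\xi=0$, relation \eqref{uf} becomes $\d_C\alpha=-\delta_C\beta$ in $L^2(M,E_0^h)$. I take the inner product of this with $\d_C\alpha\in L^2$:
\[
\|\d_C\alpha\|_{L^2}^2=-(\delta_C\beta,\d_C\alpha).
\]
To see that the right-hand side vanishes, I approximate $\alpha$ by a sequence $\alpha_k\in C^\infty(M,E_0^{h-1})$ with $\alpha_k\to\alpha$ in $W^{s,2}$ (so that $\d_C\alpha_k\to\d_C\alpha$ in $L^2$, using that $\d_C$ has order $s$ in degree $h-1$). For the smooth approximants, adjointness and $\d_C^2=0$ give
\[
(\delta_C\beta,\d_C\alpha_k)=(\beta,\d_C^2\alpha_k)=0,
\]
and passing to the limit yields $(\delta_C\beta,\d_C\alpha)=0$. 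Hence $\d_C\alpha=0$, and then $\delta_C\beta=-\d_C\alpha=0$ as well.

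The only genuinely delicate point is the adjointness/$\d_C^2=0$ identities when one of the factors has just Sobolev regularity; everything else is formal. This is handled by the density of smooth forms in the relevant Sobolev spaces on compact $M$, the tuning of $s$ and $r$ to the order of $\d_C,\delta_C$ in each degree (second order in degrees $n,n+1$, first order elsewhere), and the smoothness of harmonic forms.
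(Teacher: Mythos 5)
Your proof is correct, but it takes a different route from the paper's. The paper fixes an arbitrary test form $\varphi\in C^\infty(M,E_0^h)$, decomposes it via the smooth Hodge decomposition \eqref{eq.decompsmoothform} as $\varphi=H(\varphi)+\d_C\varphi_1+\delta_C\varphi_2$, and shows that $(\d_C\alpha,\varphi)=0$ term by term (and likewise for $\delta_C\beta$), so the whole argument runs by duality against smooth forms. You instead pair \eqref{uf} first with $\xi$ and then with $\d_C\alpha$ itself, extracting $\|\xi\|_{L^2}^2=0$ and $\|\d_C\alpha\|_{L^2}^2=0$ directly; this avoids invoking Corollary \ref{teo.HodgedecthCinf} altogether and is more self-contained at this point of the development. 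The trade-off is that your self-pairing step $\|\d_C\alpha\|_{L^2}^2=-(\delta_C\beta,\d_C\alpha)$ is genuinely an $L^2$ device: it does not carry over to the $L^p$ analogue (Lemma \ref{lemma.orthLpforms bis}), where $\d_C\alpha$ only lies in $L^p$ and cannot be paired with itself, whereas the paper's duality-against-smooth-test-forms argument transfers verbatim — which is presumably why the authors chose it. Your handling of the regularity issues (the tuning of $s,r$ to the order of $\d_C,\delta_C$ in the relevant degrees, smoothness of $\xi$ via hypoellipticity, and density of smooth forms) is sound.
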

\begin{proof}
Let us show that $(\d_C\alpha, \varphi)=0 $ for every $\varphi\in C^\infty(M,E_0^h)$. Thanks to \eqref{eq.decompsmoothform}, we can write $\varphi=H(\varphi)+\d_C\varphi_1+\delta_C\varphi_2$, with $\varphi_1\in C^\infty(M,E_0^{h-1})$ and $\varphi_2\in C^\infty(M,E_0^{h+1})$. We know that $(\delta_C\beta, \d_C\varphi_1)=(\beta, \d_C^2\varphi_1)=0$ and $(\xi, \d_C\varphi_1)= (\delta_C\xi,\varphi_1)=0$. Therefore  $(\d_C\alpha,\d_C\varphi_1)=(\d_C\alpha+\delta_C\beta+\xi,\d_C\varphi_1)=0$, by \eqref{uf}. In addition, 
$(\d_C\alpha,\delta_C\varphi_2)=(\alpha,\delta_C^2\varphi_2)=0$. 
Moreover, 
$$(\d_C\alpha,\delta_C\varphi_2)=(\alpha,\delta_C^2\varphi_2)=0\quad\text{and}\quad (\d_C\alpha,H(\varphi))=0.$$
Thus, we have proved that $(\d_C\alpha,\varphi)=0$, for every $\varphi\in C^\infty(M,E_0^h)$.
Analogously, we can prove that $(\delta_C\beta,\varphi)=0$. Finally, again by \eqref{uf}, $(\xi, \varphi)=(\d_C\alpha+\delta_C\beta+\xi, \varphi)=0$, and  the proof is done.
\end{proof}

We are finally in position to obtain our regular-Hodge decomposition in $L^2$ (we stress that the direct sum is with respect to the orthogonality in the Hilbert space $L^2(M,E_0^h)$).
\begin{teo}\label{teo.HodgedecthL2Cinf}
Let $(M,H,g^M)$ be a sub-Riemannian compact contact manifold, without boundary. Then, 
\begin{equation}\label{eq.decompL2hodge}
L^2(M,E_0^h)=\mathcal{H}^h\oplus \d_C\Big(W^{s,2}(M,E_0^{h-1})\Big)\oplus \delta_C\Big(W^{r,2}(M,E_0^{h+1})\Big),
\end{equation}
where $s=2$ if $h=n+1$ and $s=1$ otherwise, and $r=2$ if $h=n$ and $r=1$ otherwise.
\end{teo}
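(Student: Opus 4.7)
The plan is to build the decomposition explicitly through the Green operator $G$ constructed in Definition \ref{defi.GreenopL2case} and then invoke Lemma \ref{lemma.orthLpforms} for uniqueness and $L^2$-orthogonality. Given $\alpha\in L^2(M,E_0^h)$, the identity $\alpha=H(\alpha)+\Delta_{M,h}G(\alpha)$ already produces a splitting into a harmonic part and a $\Delta_{M,h}$-image; the point is to rewrite $\Delta_{M,h}G(\alpha)$ as $\d_C(\text{something})+\delta_C(\text{something})$ and check that the ``somethings'' land in the claimed Sobolev spaces, and then to verify orthogonality.

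First I would split into the three cases of \eqref{defi.RuminLapcontman intro}, using that $G(\alpha)\in W^{2\ell,2}(M,E_0^h)\cap(\mathcal{H}^h)^\bot$ with $\ell=2$ when $h=n,n+1$ and $\ell=1$ otherwise.
\begin{itemize}
\item If $h\neq n,n+1$: $\Delta_{M,h}G(\alpha)=\d_C\bigl(\delta_CG(\alpha)\bigr)+\delta_C\bigl(\d_CG(\alpha)\bigr)$. Here $\delta_C$ on $E_0^h$ and $\d_C$ on $E_0^h$ both have order $1$, so $\delta_CG(\alpha)\in W^{1,2}(M,E_0^{h-1})$ and $\d_CG(\alpha)\in W^{1,2}(M,E_0^{h+1})$, matching $s=r=1$.
\item If $h=n$: $\Delta_{M,n}G(\alpha)=\d_C\bigl(\delta_C\d_C\delta_CG(\alpha)\bigr)+\delta_C\bigl(\d_CG(\alpha)\bigr)$. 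Here $G(\alpha)\in W^{4,2}$, applying $\delta_C,\d_C,\delta_C$ (each of order $1$ in the relevant degrees) yields $\delta_C\d_C\delta_CG(\alpha)\in W^{1,2}(M,E_0^{n-1})$, while $\d_C$ on $E_0^n$ has order $2$ and gives $\d_CG(\alpha)\in W^{2,2}(M,E_0^{n+1})$, matching $s=1$, $r=2$.
\item If $h=n+1$: symmetrically, $\Delta_{M,n+1}G(\alpha)=\d_C\bigl(\delta_CG(\alpha)\bigr)+\delta_C\bigl(\d_C\delta_C\d_CG(\alpha)\bigr)$, with $\delta_CG(\alpha)\in W^{2,2}(M,E_0^n)$ (since $\delta_C$ on $E_0^{n+1}$ has order $2$) and $\d_C\delta_C\d_CG(\alpha)\in W^{1,2}(M,E_0^{n+2})$, matching $s=2$, $r=1$.
\end{itemize}
This produces a representation of $\alpha$ in each of the three claimed subspaces and is the core of the existence part.

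Next I would establish orthogonality in $L^2$ between the three summands. Harmonicity gives $\d_CH(\alpha)=\delta_CH(\alpha)=0$ by Proposition \ref{prop.characharmRum}, so for any $\beta\in W^{s,2}(M,E_0^{h-1})$ and $\gamma\in W^{r,2}(M,E_0^{h+1})$ one has $(H(\alpha),\d_C\beta)=(\delta_CH(\alpha),\beta)=0$ and $(H(\alpha),\delta_C\gamma)=(\d_CH(\alpha),\gamma)=0$. For the cross term $(\d_C\beta,\delta_C\gamma)$ I would approximate $\beta,\gamma$ by smooth forms (using density of $C^\infty$ in the corresponding Sobolev spaces), apply $(\d_C\beta_k,\delta_C\gamma_k)=(\d_C^2\beta_k,\gamma_k)=0$ on smooth approximants, and pass to the limit, using that $\d_C$ and $\delta_C$ are bounded from the respective $W^{s,2}$, $W^{r,2}$ into $L^2$ in each of the three cases.

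Finally, uniqueness follows immediately from Lemma \ref{lemma.orthLpforms}: any other decomposition of $\alpha$ would produce an identity $\xi+\d_C\beta+\delta_C\gamma=0$ with $\xi\in\mathcal{H}^h$, $\beta\in W^{s,2}$, $\gamma\in W^{r,2}$, forcing each summand to vanish. The main bookkeeping difficulty—and the only substantive obstacle—is the case-by-case tracking of the order of $\d_C$ and $\delta_C$ on forms of degree $n$ and $n+1$: it is precisely because $\Delta_{M,h}$ has order $4$ there (and is fed a $W^{4,2}$ primitive by Theorem \ref{teo.greenopregsol}) that the mixed exponents $s$ and $r$ appear, and one must verify that the two contributions of $\Delta_{M,h}G(\alpha)$ actually distribute into the ``correct'' $\d_C(W^{s,2})$ and $\delta_C(W^{r,2})$ slots rather than getting absorbed into each other.
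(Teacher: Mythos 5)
Your proposal is correct and follows essentially the same route as the paper: both construct the splitting from $\alpha=H(\alpha)+\Delta_{M,h}G(\alpha)$ using the $W^{2\ell,2}$-regular Green operator of Definition \ref{defi.GreenopL2case}, distribute $\Delta_{M,h}G(\alpha)$ into the $\d_C$- and $\delta_C$-slots exactly as in \eqref{eq.splitformecinfty}, and obtain uniqueness from Lemma \ref{lemma.orthLpforms}. Your explicit case-by-case bookkeeping of the orders of $\d_C$ and $\delta_C$ in degrees $n$, $n+1$ is precisely what the paper compresses into the phrase ``with the same argument we used for proving \eqref{eq.decompsmoothform}''.
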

\begin{proof}
Let $\alpha\in L^2(M,E_0^h)$ and $G(\alpha)\in W^{2\ell,2}(M,E_0^h)\cap\Big(\mathcal{H}^h\Big)^\bot$ as in Definition \ref{defi.GreenopL2case}.
With the same argument we used for proving \eqref{eq.decompsmoothform}, we have that
\begin{equation}\label{eq.L2splitform}\alpha=\delta_C\zeta+\d_C\omega+H(\alpha),
\end{equation}
where $\zeta\in W^{2,2}$ if $h= n$ and $\zeta\in W^{1,2}$ if $h\neq n$; similarly $\omega\in W^{2,2}$ if $h=n+1$ and $\omega\in W^{1,2}$ if $h\neq n+1$. 

We prove now the $L^2$-orthogonality of the decomposition. We know that the harmonic projection is unique. If we write $\alpha$ with two different decompositions, i.e.
$$\alpha=\delta_C\zeta+\d_C\omega+H(\alpha)=\delta_C\widetilde\zeta+\d_C\widetilde\omega+H(\alpha),$$
then, $\delta_C(\zeta-\widetilde\zeta)+\d_C(\omega-\widetilde\omega)=0$, implying by Lemma \ref{lemma.orthLpforms} that $\zeta=\widetilde \zeta$ and $\omega=\widetilde \omega$, completing the proof.
\end{proof}\bigskip

\section{Main results on $L^p$-Hodge decomposition}\label{sec:HodgedecLPformss}

Now we generalize the Hodge decomposition for $L^p$-forms, when $1<p<\infty$. 
Our main achievement is contained in the following theorem.
\begin{teo}\label{teo.HodgedecthLp}
Let $(M,H,g^M)$ be a sub-Riemannian compact contact manifold without boundary, $1\le h\le 2n$  and $1<p<\infty$. Then
\begin{align}
L^p(M,E_0^h)=&\mathcal{H}^h\oplus \d_C\Big(W^{1,p}(M,E_0^{h-1})\Big)\oplus \delta_C\Big(W^{1,p}(M,E_0^{h+1})\Big), \; \text{if } h\neq n, n+1\,, \label{eq.decompLphodge}\\
L^p(M,E_0^n)=&\mathcal{H}^n\oplus \d_C\Big(W^{1,p}(M,E_0^{n-1})\Big)\oplus \delta_C\Big(W^{2,p}(M,E_0^{n+1})\Big), \label{eq.decompLphodgedue}\\
L^p(M,E_0^{n+1})=&\mathcal{H}^{n+1}\oplus \d_C\Big(W^{2,p}(M,E_0^{n})\Big)\oplus \delta_C\Big(W^{1,p}(M,E_0^{n+2})\Big). \label{eq.decompLphodgetre}
\end{align}


\end{teo}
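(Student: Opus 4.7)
The plan is to reduce the decompositions \eqref{eq.decompLphodge}--\eqref{eq.decompLphodgetre} to the $L^p$-continuity of the Green operator, namely to the claim that
$G\colon L^p(M,E_0^h)\to W^{2\ell,p}(M,E_0^h)\cap(\mathcal{H}^h)^\perp$ is bounded for every $1<p<\infty$, where $\ell=2$ if $h\in\{n,n+1\}$ and $\ell=1$ otherwise. Once this is in hand, for $\alpha\in L^p$ I would read off the primitives $\omega$ and $\zeta$ from $G(\alpha)$ exactly as in \eqref{eq.splitformecinfty}, e.g.\ $\omega=\delta_CG(\alpha)$ and $\zeta=\d_CG(\alpha)$ when $h\neq n,n+1$, with the analogous formulae in the critical degrees; a degree-by-degree book-keeping of the orders of $\d_C$ and $\delta_C$ on $E_0^h$ against $G(\alpha)\in W^{2\ell,p}$ produces precisely the Sobolev regularities $W^{s,p}$ and $W^{r,p}$ stated in the theorem. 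The fact that the three summands are in direct sum inside $L^p$ is the $L^p$-analogue of Lemma~\ref{lemma.orthLpforms}, which I would prove by pairing with smooth test forms and using the smooth Hodge decomposition of Corollary~\ref{teo.HodgedecthCinf} to generate enough test forms, the smoothness of harmonic forms (Remark~\ref{oss.hypoLaprum}) making all the pairings meaningful.

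For the range $2\le p<\infty$, I would bootstrap upward from the $L^2$-theory. The base step $G\colon L^2\to W^{2\ell,2}$ is Theorem~\ref{teo.greenopregsol}. The inductive step couples the Sobolev embedding of Remark~\ref{oss.SobembstimaCQ}, which promotes $W^{\ell,q}$ into $L^{q(1+1/C_Q)}$, with the global $L^q$-maximal hypoellipticity of Theorem~\ref{teo.globalmaxhypocont}. Concretely, assuming that $G$ is bounded $L^q\to W^{2\ell,q}$ for some $q\in[2,p)$, set $q_+:=\min(p,q(1+1/C_Q))$; for smooth $\alpha\in L^{q_+}$ one has $\alpha\in L^q$ by compactness of $M$, hence $G(\alpha)\in W^{2\ell,q}\hookrightarrow L^{q_+}$, and feeding this into the maximal hypoellipticity estimate applied to the smooth potential produced by Corollary~\ref{coro.esistsolCinf} yields
$$
\|G(\alpha)\|_{W^{2\ell,q_+}}\le C\bigl(\|\alpha-H(\alpha)\|_{L^{q_+}}+\|G(\alpha)\|_{L^{q_+}}\bigr)\le C\|\alpha\|_{L^{q_+}},
$$
which by density of smooth forms extends to $G\colon L^{q_+}\to W^{2\ell,q_+}$. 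Since $1+1/C_Q>1$, the exponent reaches $p$ in finitely many iterations.

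For $1<p<2$ I would invoke a duality argument based on the conjugate case $p'>2$ just established. On smooth forms orthogonal to $\mathcal{H}^h$ the Green operator is formally self-adjoint,
$$
(G(\alpha),\beta)=(\Delta_M G(\alpha),G(\beta))=(\alpha,G(\beta)),
$$
so using the $L^{p'}$-bound on $G(\beta)$ from the previous step and H\"older,
$$
|(G(\alpha),\beta)|\le\|\alpha\|_{L^p}\|G(\beta)\|_{L^{p'}}\le C\|\alpha\|_{L^p}\|\beta\|_{L^{p'}}.
$$
Taking the supremum over $\beta\in C^\infty$ with $\|\beta\|_{L^{p'}}=1$ (and handling the harmonic component separately, which is trivial since $\mathcal{H}^h\subset C^\infty$ is finite-dimensional and hence contained in every $L^q$ with equivalent norms) gives $\|G(\alpha)\|_{L^p}\le C\|\alpha\|_{L^p}$ first on $C^\infty$ and then on $L^p$ by density. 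A final application of maximal hypoellipticity, again via approximation by smooth $\alpha_k$ and the smooth potentials $G(\alpha_k)$, promotes this to $\|G(\alpha)\|_{W^{2\ell,p}}\le C\|\alpha\|_{L^p}$.

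The main technical hurdle I foresee is that Theorem~\ref{teo.globalmaxhypocont} is phrased as an \emph{a priori} estimate on $W^{2\ell,p}$, so each use of it as a regularity statement for $L^p$-forms must be justified by the standard device of approximating $\alpha$ by smooth $\alpha_k$, forming the smooth potentials $\gamma_k:=G(\alpha_k)$ from Corollary~\ref{coro.esistsolCinf}, applying the inequality to the $\gamma_k$, and passing to a weak $W^{2\ell,p}$-limit that is identified with $G(\alpha)$ by the $L^2$-uniqueness of the potential and the continuity of $G$ on $L^2$. The bootstrap forces this argument to be run at each intermediate exponent, which is routine on the compact manifold $M$ but needs to be spelled out; the $L^p$-uniqueness of the decomposition (extending Lemma~\ref{lemma.orthLpforms}) is a parallel but less delicate computation.
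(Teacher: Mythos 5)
Your proposal is correct and follows essentially the same route as the paper: bootstrap the $L^p$-boundedness of the Green operator from $L^2$ via the Sobolev embedding of Remark \ref{oss.SobembstimaCQ} and the global maximal hypoellipticity of Theorem \ref{teo.globalmaxhypocont} for $2<p<\infty$, then dualize for $1<p<2$ using the symmetry of $G$ on smooth forms, and conclude with the $L^p$-uniqueness lemma. The only cosmetic difference is in the duality step, where the paper realizes the $L^p$-norm with the explicit extremizing sequence $\beta_k=G(\alpha)\left(|G(\alpha)|^2+\tfrac{1}{k}\right)^{\frac{p-2}{2}}$ rather than taking an abstract supremum over smooth $\beta$ with $\|\beta\|_{L^{p'}}=1$; both are the same argument.
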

The direct sums are in the sense of the uniqueness of Lemma \ref{lemma.orthLpforms bis}.

The proof of this result follows an argument similar to the one used to obtain Theorem \ref{teo.HodgedecthL2Cinf} in the previous section, once we have proved  the $L^p$-regularity
of solutions of Poisson's equation when the datum $\alpha$ is an $L^p$-form. In Section 3,  we have shown that if  $\alpha 
\in L^2(M,E_0^h)\cap\left(\mathcal{H}^h\right)^\bot$ then its potential is in $ W^{2\ell,2}(M,E_0^h)\cap\left(\mathcal{H}^h\right)^\bot$. Now, we are going to show that, starting from $\alpha  
\in L^p(M,E_0^h)\cap\left(\mathcal{H}^h\right)^\bot$, its potential is in $W^{2\ell,p}(M,E_0^h)\cap\left(\mathcal{H}^h\right)^\bot$.



We begin with a result related to a projection on  $\mathcal{H}^\bullet$ for $L^p$-forms. The proof is similar to the one given in \cite{scott} (see Lemma 5.6 and Proposition 5.9 therein) since it basically relies on the  fact that $ \mathcal{H}^h$ has finite dimension for any $h$.

\begin{prop}\label{prop.harmoprojj}
Let $1<p<\infty$. For any $\alpha\in L^p(M, E_0^h)$ there exists a unique $h$-form in  $\mathcal{H}^h$, that we denote by $H(\alpha)$,  such that $(\alpha-H(\alpha), \beta)=0$ for any $\beta\in \mathcal{H}^h$.
Moreover the map $$H:L^p(M,E_0^h)\longrightarrow \mathcal{H}^h$$
 is a linear bounded operator.
\end{prop}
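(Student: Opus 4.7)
The plan is to exploit the finite-dimensionality of $\mathcal{H}^h$ (Proposition \ref{teo.findimkerlap}) together with the smoothness of harmonic forms on the compact manifold $M$ (Remark \ref{oss.hypoLaprum}), which ensures $\mathcal{H}^h \subset C^\infty(M,E_0^h) \subset L^q(M,E_0^h)$ for every $q \in [1,\infty]$. So all the pairings we need make sense regardless of $p$.

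First, I would fix an $L^2$-orthonormal basis $\{e_1,\dots,e_N\}$ of $\mathcal{H}^h$, where $N = \dim \mathcal{H}^h < \infty$. Since each $e_i$ is smooth on the compact manifold $M$, in particular $e_i \in L^{p'}(M,E_0^h)$ for $p' = p/(p-1)$, so by Hölder's inequality the pairing $(\alpha, e_i) := \int_M \alpha \wedge \ast e_i$ is well defined for every $\alpha \in L^p(M,E_0^h)$ and satisfies $|(\alpha,e_i)| \le \|\alpha\|_{L^p}\|e_i\|_{L^{p'}}$. I then define
\[
H(\alpha) := \sum_{i=1}^{N} (\alpha, e_i)\, e_i \in \mathcal{H}^h.
\]

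Next, I would verify the orthogonality: for any $\beta = \sum_{j} c_j e_j \in \mathcal{H}^h$, linearity and $L^2$-orthonormality of the basis give
\[
(\alpha - H(\alpha), \beta) = \sum_j c_j (\alpha, e_j) - \sum_{i,j} (\alpha, e_i) c_j (e_i, e_j) = \sum_j c_j (\alpha, e_j) - \sum_j c_j (\alpha, e_j) = 0.
\]
For uniqueness, if $\tilde{H} \in \mathcal{H}^h$ also satisfies $(\alpha - \tilde H, \beta) = 0$ for every $\beta \in \mathcal{H}^h$, then $H(\alpha) - \tilde H \in \mathcal{H}^h$ and $(H(\alpha) - \tilde H, \beta) = 0$ for every $\beta \in \mathcal{H}^h$; choosing $\beta = H(\alpha) - \tilde H$ forces $\|H(\alpha)-\tilde H\|_{L^2} = 0$, hence $\tilde H = H(\alpha)$ since $\mathcal{H}^h$ consists of smooth forms.

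Finally, linearity of $H$ is immediate from the linearity of the pairing in its first argument. For boundedness, I estimate
\[
\|H(\alpha)\|_{L^p} \le \sum_{i=1}^{N} |(\alpha,e_i)|\, \|e_i\|_{L^p} \le \Bigl(\sum_{i=1}^{N} \|e_i\|_{L^{p'}} \|e_i\|_{L^p}\Bigr) \|\alpha\|_{L^p} =: C_p \|\alpha\|_{L^p},
\]
where $C_p < \infty$ because the finitely many $e_i$ are smooth on the compact manifold $M$. No step here is a serious obstacle: the whole argument rests on the finite-dimensionality of $\mathcal{H}^h$ and its inclusion in $C^\infty(M,E_0^h)$, which have already been established.
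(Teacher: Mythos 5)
Your proposal is correct and follows essentially the same route as the paper: both define $H(\alpha)=\sum_i(\alpha,e_i)e_i$ with respect to a finite $L^2$-orthonormal basis of harmonic forms, verify orthogonality and uniqueness by the same computations, and derive boundedness from the finite-dimensionality of $\mathcal{H}^h$ together with the smoothness (hence $L^{p'}$-integrability) of harmonic forms on the compact manifold. Your version merely makes the H\"older estimate $|(\alpha,e_i)|\le\|\alpha\|_{L^p}\|e_i\|_{L^{p'}}$ explicit where the paper appeals to equivalence of norms on a finite-dimensional space.
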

\begin{proof}
Since $\mathcal{H}^h$ has finite dimension, we can consider $\{\omega_1,\ldots,\omega_r\}$ as an orthonormal basis of $\mathcal{H}^h$. For every $\alpha\in L^p(M,E_0^h)$, we set
$$H(\alpha):=\sum_{i=1}^r\,(\alpha,\omega_i)\,\omega_i.$$
For every $\beta=\sum_i\beta_i\omega_i\in\mathcal{H}^h$, we have that
$$(\alpha-H(\alpha),\beta)=(\alpha,\beta)-(H(\alpha),\beta)=\sum_{i=i}^r\,(\alpha,\omega_i)\beta_i-\sum_{i=i}^r\,(\alpha,\omega_i){(\omega_i,\omega_i)}\beta_i=0,$$
since ${(\omega_i,\omega_i)}=1$.
The element $H(\alpha)$ is unique since, if $\widetilde\alpha=\sum_i\,\widetilde\alpha_i\omega_i\in\mathcal{H}^h$ is such that
$$(\alpha-\widetilde\alpha,\beta)=0,\quad\quad\forall\,\beta\in\mathcal{H}^h,$$
then, keeping also in mind that $\alpha-H(\alpha)\in \left(\mathcal{H}^h\right)^\bot$ and ${\widetilde\alpha-H(\alpha)}\in\mathcal{H}^h$, we have
\begin{align*}
0&=({\alpha-H(\alpha)},{\widetilde\alpha-H(\alpha)})-({\alpha-\widetilde\alpha},\widetilde\alpha-H(\alpha))\\
&=(\widetilde\alpha-H(\alpha),\widetilde\alpha-H(\alpha))=\|\widetilde\alpha-H(\alpha)\|^2.
\end{align*}
Hence, $\widetilde\alpha=H(\alpha)$.

The fact that $H:L^p(M,E_0^h)\to\mathcal{H}^h$ is a bounded operator descends from the finite dimension of $\mathcal{H}^h$, where every norm is equivalent. 
Indeed, if $\|\cdot\|$ is a norm on $\mathcal{H}^h$, there exists  constants $c$ and $C$ so that $\|H(\alpha)\|\leq c \sum_{i=1}^r|(\alpha,\omega_i)|{\|\omega_i\|_{L^2}}\le c \sum_{i=1}^r|(\alpha,\omega_i)|\le C\|\alpha\|_{L^p}$. 
\end{proof}

Hence, at first we infer that
$$L^p(M,E_0^h)=\mathcal{H}^h\oplus\left(\left(\mathcal{H}^h\right)^\bot\cap L^p(M,E_0^h)\right).$$

Throughout this section we  omit to specify that again $M$ is assumed to be compact and $1\le h\le 2n$.
Imitating the line of the proof presented in Appendix A, we start by obtaining an equivalent norm in the spaces $W^{\ell,p}(M,E_0^h)\cap\left(\mathcal{H}^h\right)^\bot$.

\begin{teo}\label{teo.scottstimaperpoinc}
 Let $1<p<\infty$. Let $\ell=2$ if $h=n,n+1$ and $\ell=1$ for other index $h$. Then, there exists $C_{M}>0$ such that, for every $\alpha\in W^{\ell,p}(M,E_0^h)\cap\left(\mathcal{H}^h\right)^\bot$, we have
\begin{gather}\label{eq.poincdebole}
\begin{split}
\|\alpha\|_{W^{1,p}(M,E_0^h)}&\leq C_{M}\Big(\|\d_C\alpha\|_{L^{p}(M,E_0^{h+1})}+\|\delta_C\alpha\|_{L^{p}(M,E_0^{h-1})}\Big),\quad\text{if $h\neq n,n+1$}, \\
\|\alpha\|_{W^{2,p}(M,E_0^n)}&\leq C_{M}\Big(\|\d_C\alpha\|_{L^{p}(M,E_0^{n+1})}+\|\d_C\delta_C\alpha\|_{L^{p}(M,E_0^{n})}\Big),\quad\text{if $h=n$}, \\
\|\alpha\|_{W^{2,p}(M,E_0^{n+1})}&\leq C_{M}\Big(\|\delta_C\alpha\|_{L^{p}(M,E_0^n)}+\|\delta_C\d_C\alpha\|_{L^{p}(M,E_0^{n+1})}\Big),\quad\text{if $h=n+1$}.
\end{split}
\end{gather}
\end{teo}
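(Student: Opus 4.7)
I would argue by contradiction, relying on the Gaffney inequalities of Theorem \ref{teo.GaffIneqcontman}, the compact embedding $W^{\ell,p}(M,E_0^h)\hookrightarrow L^p(M,E_0^h)$ of Remark \ref{teo.compactembcontact}, and the characterization of harmonicity in Proposition \ref{prop.characharmRum}. Assume the claim fails; then for each $k\in\N$ there is $\alpha_k\in W^{\ell,p}(M,E_0^h)\cap(\mathcal{H}^h)^\bot$ with $\|\alpha_k\|_{W^{\ell,p}(M,E_0^h)}=1$ such that the right-hand side of the relevant inequality in \eqref{eq.poincdebole}, evaluated at $\alpha_k$, tends to $0$ as $k\to\infty$.

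By the compact embedding, along a (non-relabelled) subsequence, $\alpha_k\to\alpha$ in $L^p(M,E_0^h)$. Applied to $\alpha_k-\alpha_j$, Theorem \ref{teo.GaffIneqcontman} bounds $\|\alpha_k-\alpha_j\|_{W^{\ell,p}}$ by the two ``differential'' terms at $\alpha_k$ and $\alpha_j$ (each vanishing in the limit by the choice of the sequence) plus $\|\alpha_k-\alpha_j\|_{L^p}$; since all three pieces tend to zero, $(\alpha_k)$ is Cauchy in $W^{\ell,p}$ and converges in norm to $\alpha$, which therefore satisfies $\|\alpha\|_{W^{\ell,p}}=1$. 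Passing to the $L^p$-limit in the continuous first- or second-order operators appearing on the right-hand side of \eqref{eq.poincdebole}, one obtains $\d_C\alpha=\delta_C\alpha=0$ when $h\neq n,n+1$, $\d_C\alpha=\d_C\delta_C\alpha=0$ when $h=n$, and $\delta_C\alpha=\delta_C\d_C\alpha=0$ when $h=n+1$. By Proposition \ref{prop.characharmRum} this means $\alpha\in\mathcal{H}^h$.

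On the other hand, $(\alpha_k,\beta)=0$ for every $\beta\in\mathcal{H}^h\subset C^\infty(M,E_0^h)$ passes to the limit (the pairing is $L^p$-continuous on the compact manifold $M$ since $\beta$ lies in every $L^{p'}$), so that $\alpha\in(\mathcal{H}^h)^\bot$ and hence $\alpha=0$, contradicting $\|\alpha\|_{W^{\ell,p}}=1$. The only real nuisance is that the Gaffney estimate and the characterization of harmonicity are phrased differently in the three regimes $h\neq n,n+1$, $h=n$, $h=n+1$, so the compactness-plus-closedness scheme has to be run three times with the appropriate replacement of first-order by second-order operators; the argument is otherwise formally identical in all cases.
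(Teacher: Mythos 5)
Your proposal is correct and follows essentially the same route as the paper: a contradiction argument combining the Gaffney inequalities of Theorem \ref{teo.GaffIneqcontman}, compactness, the stability of $(\mathcal{H}^h)^\bot$ under the limit, and Proposition \ref{prop.characharmRum} to conclude the limit is harmonic and hence zero. If anything, your normalization in $W^{\ell,p}$ together with the Cauchy argument makes the final contradiction (against $\|\alpha\|_{W^{\ell,p}}=1$) slightly more explicit than the paper's version, which normalizes in $L^p$ and leaves the use of the compact embedding implicit.
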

\begin{proof}
Without loss of generality we can assume $\alpha\neq 0$ and
we argue by contradiction.

\underline{If $h\neq n,n+1$}, we suppose that \eqref{eq.poincdebole} is not true. Then, for every $k\in\N$, there exists $\alpha_k\in W^{1,p}(M,E_0^h)\cap\left(\mathcal{H}^h\right)^\bot$ such that 
\begin{equation}\label{eq.randomstimak}
\|\alpha_k\|_{W^{1,p}}> k\Big(\|\d_C\alpha_k\|_{L^{p}}+\|\delta_C\alpha_k\|_{L^{p}}\Big).
\end{equation}
We can suppose that $\|\alpha_k\|_{L^p}=1$, for every $k$. 
By Theorem \ref{teo.GaffIneqcontman}, we have that
\begin{align*}
\|\alpha_k\|_{W^{1,p}}\leq C\Big(\|\alpha_k\|_{L^p}+\|\d_C\alpha_k\|_{L^{p}}+\|\delta_C\alpha_k\|_{L^{p}}\Big)\stackrel{\eqref{eq.randomstimak}}{\leq} C\|\alpha_k\|_{L^p}+\frac{C}{k}\|\alpha_k\|_{W^{1,p}}.
\end{align*}
Thus,
$$\|\alpha_k\|_{W^{1,p}}\leq\frac{kC}{k-C}\|\alpha_k\|_{L^p}=\frac{kC}{k-C}.$$
If $k\gg 1$, since $\frac{kC}{k-C}\xrightarrow{k\to +\infty} C$, then $\|\alpha_k\|_{W^{1,p}}\leq C+1$. Using \eqref{eq.randomstimak}, we obtain
\begin{equation}\label{eq.stimadeltadug0}
\|\d_C\alpha_k\|_{L^{p}}+\|\delta_C\alpha_k\|_{L^{p}}<\frac{C+1}{k},\quad\quad\text{for $k\gg 1$}.
\end{equation}
This implies that $(\delta_C\alpha_k)_{k\in\N}$ and $(\d_C\alpha_k)_{k\in\N}$ are bounded sequences in $L^{p}$, and thus (by Theorem \ref{teo.GaffIneqcontman} and $\|\alpha_k\|_{L^p}=1$) $(\alpha_k)_{k\in\N}$ is bounded in $W^{1,p}$. Up to pass to a subsequence, $\alpha_k\rightharpoonup \alpha\in W^{1,p}$, and, since $\alpha_k\in\left(\mathcal{H}^h\right)^\bot$, $\alpha$ is orthogonal to harmonic forms. But, by \eqref{eq.stimadeltadug0}, we have that
$$\delta_C\alpha_k\to 0\quad\text{and}\quad\d_C\alpha_k\to 0\quad\quad\text{in $L^{p}$},$$
proving that $\delta_C\alpha=\d_C\alpha=0$ weakly. Thus $\alpha$ is harmonic. Hence $\alpha=0$, but this is an absurd since we have supposed that $\alpha\neq 0$.\medskip

\underline{If $h=n$ or $h=n+1$}, we can argue analogously using the estimates given in Theorem \ref{teo.GaffIneqcontman} for $W^{2,p}$.
\end{proof}


\begin{coro}\label{prop.dWclosedsett}
Let $1<p<\infty$. Let $s=2$ if $h=n+1$ and $s=1$ otherwise, and $r=2$ if $h=n$ and $r=1$ otherwise. The spaces $\d_C\Big(W^{s,p}(M,E_0^{h-1})\Big), \delta_C\Big(W^{r,p}(M,E_0^{h+1})\Big)$ are closed under the $L^p$-convergence.
\end{coro}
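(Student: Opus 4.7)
The plan is a variation of the Hodge--Morrey scheme. Given a convergent sequence of images of $\d_C$, I will project each pre-image onto the co-closed, harmonic-orthogonal subspace via the smooth Hodge decomposition, use the Poincar\'e inequality of Theorem \ref{teo.scottstimaperpoinc} to secure a uniform Sobolev bound on the modified sequence, and then extract a weakly convergent subsequence whose image under $\d_C$ converges to the desired limit. I describe the argument only for $\d_C\bigl(W^{s,p}(M, E_0^{h-1})\bigr)$, as the case $\delta_C\bigl(W^{r,p}(M, E_0^{h+1})\bigr)$ is entirely symmetric.

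Let $\alpha_k \in W^{s,p}(M, E_0^{h-1})$ with $\beta_k := \d_C\alpha_k \to \beta$ in $L^p(M, E_0^h)$. By density of $C^\infty$ in the Folland--Stein Sobolev spaces, I may assume each $\alpha_k$ is smooth (otherwise replace it by a smooth $\alpha_k'$ with $\|\alpha_k' - \alpha_k\|_{W^{s,p}} < 1/k$, so that $\d_C\alpha_k' \to \beta$ in $L^p$). Applying the smooth Hodge decomposition (Corollary \ref{teo.HodgedecthCinf}) at degree $h-1$, write $\alpha_k = H(\alpha_k) + \d_C\phi_k + \delta_C\psi_k$ with $\phi_k,\psi_k \in C^\infty$, and set $\tilde\alpha_k := \delta_C\psi_k$. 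Then $\tilde\alpha_k \in C^\infty(M, E_0^{h-1})$ satisfies $\d_C\tilde\alpha_k = \beta_k$, $\delta_C\tilde\alpha_k = 0$ (by $\delta_C^2 = 0$), and $\tilde\alpha_k \perp \mathcal{H}^{h-1}$.

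The key step is a uniform Sobolev bound from Theorem \ref{teo.scottstimaperpoinc}. For $h - 1 \neq n, n+1$ the Poincar\'e inequality yields $\|\tilde\alpha_k\|_{W^{1,p}} \leq C\bigl(\|\d_C\tilde\alpha_k\|_{L^p} + \|\delta_C\tilde\alpha_k\|_{L^p}\bigr) = C\|\beta_k\|_{L^p}$, while for $h - 1 = n$ (so $h = n+1$ and $s = 2$) the analogous bound $\|\tilde\alpha_k\|_{W^{2,p}} \leq C\|\beta_k\|_{L^p}$ holds. In both cases the right-hand side is bounded (since $\beta_k \to \beta$ in $L^p$), so $\tilde\alpha_k$ is uniformly bounded in $W^{s,p}$. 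By reflexivity of $W^{s,p}$ and the compact embedding $W^{s,p} \hookrightarrow L^p$ of Remark \ref{teo.compactembcontact}, a subsequence satisfies $\tilde\alpha_k \rightharpoonup \alpha$ weakly in $W^{s,p}$ and strongly in $L^p$ for some $\alpha \in W^{s,p}$. Continuity of $\d_C : W^{s,p} \to L^p$ then gives $\d_C\tilde\alpha_k \rightharpoonup \d_C\alpha$ weakly in $L^p$; combined with the strong convergence $\d_C\tilde\alpha_k = \beta_k \to \beta$, this forces $\d_C\alpha = \beta$, placing $\beta$ in $\d_C\bigl(W^{s,p}(M, E_0^{h-1})\bigr)$.

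The main obstacle is the critical case $h - 1 = n+1$ (that is, $h = n+2$ with $s=1$), where Theorem \ref{teo.scottstimaperpoinc} at degree $n+1$ controls $\|\tilde\alpha_k\|_{W^{2,p}}$ only through $\|\delta_C\d_C\tilde\alpha_k\|_{L^p} = \|\delta_C\beta_k\|_{L^p}$, and mere $L^p$-convergence of $\beta_k$ does not entail any bound on $\delta_C\beta_k$. A symmetric difficulty arises at $h + 1 = n$ (with $r = 1$) for $\delta_C$-closedness. I would treat these two edge cases jointly via the Hodge $\ast$-isomorphism, which interchanges $\d_C$ and $\delta_C$ up to sign and thereby couples them, and apply a refined version of the argument using the $W^{2,p}$ Poincar\'e at the adjacent degrees $n$ and $n+1$ together with the compact embedding $W^{2,p} \hookrightarrow W^{1,p}$ on the compact manifold $M$.
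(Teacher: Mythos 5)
Your argument coincides with the paper's own proof of Corollary \ref{prop.dWclosedsett} in all but the two degrees you yourself flag: pass to smooth pre-images by density, replace them by the co-exact representative $\tilde\alpha_k=\delta_C\psi_k$ coming from the smooth Hodge decomposition, bound it uniformly in $W^{s,p}$ via Theorem \ref{teo.scottstimaperpoinc}, and extract a weak limit (the paper phrases the last step with Banach--Alaoglu and lower semicontinuity of the $L^p$-norm rather than with the compact embedding, which is immaterial). For $h-1\neq n,n+1$ and for $h-1=n$ your estimates are correct and the argument is complete.

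The gap is exactly where you locate it, and your proposed repair does not close it. For $h=n+2$ the form $\tilde\alpha_k$ has degree $n+1$, where the only inequality available is the $W^{2,p}$ bound of Theorem \ref{teo.scottstimaperpoinc}, whose right-hand side contains $\|\delta_C\d_C\tilde\alpha_k\|_{L^p}=\|\delta_C\beta_k\|_{L^p}$, a quantity not controlled by mere $L^p$-convergence of $\beta_k$; symmetrically for $h=n-1$ on the $\delta_C$ side. The Hodge $\ast$-isomorphism cannot resolve this: since $\ast$ interchanges $\d_C$ and $\delta_C$ and sends degree $n+1$ to degree $n$, it maps the unresolved $\d_C$-case in degree $h=n+2$ precisely onto the unresolved $\delta_C$-case in degree $h=n-1$ and vice versa, so the two edge cases are swapped rather than reduced to solved ones. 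A genuine argument is still required there --- for instance a $W^{1,p}$ Gaffney-type estimate for $\delta_C$-closed forms of degree $n+1$ (and $\d_C$-closed forms of degree $n$), which the paper does not provide, or an a posteriori derivation of closedness from the boundedness on $L^p$ of the projections $\d_C\delta_C G$ and $\delta_C\d_C G$ appearing in \eqref{eq.decompLppWGhodge}. For what it is worth, the paper's own proof runs the case distinction on the degree $h$ of the ambient space and applies the first estimate of Theorem \ref{teo.scottstimaperpoinc} to the degree-$(h-1)$ form $\delta_C\eta''_k$ whenever $h\neq n,n+1$; at $h=n+2$ that form has degree $n+1$, where that estimate is not the one stated, so your diagnosis isolates a point that the paper's write-up glosses over --- but a diagnosis is not a proof, and the edge cases remain open in your proposal.
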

\begin{proof}
Let $\alpha\in \overline{\d_C\left(W^{s,p}(M,E_0^{h-1})\right)}$. Thus, there exists a sequence $(\d_C\omega_k)_{k\in\N}\subseteq\d_C\Big(W^{s,p}(M,E_0^{h-1})\Big)$ such that $\d_C \omega_k\to \alpha$ in $L^p(M,E_0^h)$. Since $C^\infty(M,E_0^{h-1})$ is dense in $W^{s,p}(M,E_0^{h-1})$, for every $k\in\N$, there exists a sequence $(\eta_j^{(k)})_{j\in\N}\subseteq C^\infty(M,E_0^{h-1})$ such that $\eta_j^{(k)}\xrightarrow{j\to+\infty}\omega_k$ in $W^{s,p}(M,E_0^{h-1})$. Then, we define $\eta_k:=\eta_{j_k}^{(k)}$ such that
$\|\eta_{j_k}^{(k)}-\omega_k\|_{W^{s,p}}\leq \frac{1}{k}$, for every $k$, implying that
$$\|\d_C\eta_k-\alpha\|_{L^p}\leq\|\d_C(\eta_k-\omega_k)\|_{L^p}+\|\d_C\omega_k-\alpha\|_{L^p}\leq \frac{1}{k}+\|\d_C\omega_k-\alpha\|_{L^p}\xrightarrow{k\to+\infty} 0.$$
Invoking the decomposition for smooth forms, we can write $\eta_k=H(\eta_k)+\d_C\eta'_k+\delta_C\eta''_k$ and we can suppose $H(\eta_k)=0$ since $\d_C\eta_k=\d_C(\eta_k-H(\eta_k))=\d_C\delta_C\eta''_k$.

If $h\neq n,n+1$ (the cases $h=n$ or $h=n+1$ are analogous), by Theorem \ref{teo.scottstimaperpoinc}, we have
\begin{align*}
\|\delta_C\eta''_k\|_{W^{1,p}}&\leq C_M \left(\|\d_C\delta_C\eta''_k\|_{L^p}+\|\delta_C^2\eta''_k\|_{L^p}\right)\\
&=C_M\|\d_C\eta_k\|_{L^p}\leq C\left(\|\alpha\|_{L^p}+1\right),\quad\text{for $k\gg 1$}.
\end{align*}
By Banach-Alaoglu theorem, there exists $\gamma\in W^{1,p}(M,E_0^{h-1})$ such that, up to pass to a subsequence, $\delta_C\eta''_k\rightharpoonup \gamma$ in $W^{1,p}(M,E_0^{h-1})$. Thus, $\d_C\eta_k=\d_C\delta_C\eta''_k\rightharpoonup \d_C\gamma$ in $L^p(M,E_0^h)$. By the lower semicontinuity of $\|\cdot\|_{L^p}$,
$$\|\d_C\gamma-\alpha\|_{L^p}\leq \liminf_{k\to+\infty}\|\d_C\eta_k-\alpha\|_{L^p}=0.$$
We have proved that $\alpha=\d_C\gamma$, with $\gamma\in W^{1,p}(M,E_0^{h-1})$, i.e.\,$\d_C\Big(W^{s,p}(M,E_0^{h-1})\Big)$ is closed under the $L^p$-convergence.
We can argue analogously for $\delta_C\Big(W^{r,p}(M,E_0^{h+1})\Big)$, completing the proof.
\end{proof}

\medskip

Our approach now divides  in two parts. First we examinate the case $2<p<\infty$, where we can use the information of previous section: if $\alpha\in L^p(M,E_0^h)\cap\left(\mathcal{H}^h\right)^\bot$, since $p>2$ and $M$ is compact, $\alpha\in L^2(M,E_0^h)\cap\left(\mathcal{H}^h\right)^\bot$.

On the other hand, the case $1<p<2$ will follow after a duality argument on smooth functions in Proposition \ref{prop.regLp1mpm2} (see Proposition 5.17 in \cite{scott}).

\bigskip

{\bf Case $2<p<\infty$.}

\medskip

Using Sobolev embeddings in Proposition \ref{teo.Sobembthcontman}, and Remark \ref{oss.SobembstimaCQ}, we have the following result on the $W^{2\ell,p}$-regularity of the potential, for $2<p<\infty$.

\begin{teo}\label{teo.Lpreg2mpminfty}
If $2<p<\infty$ then for every $\alpha\in L^p(M,E_0^h)\cap\left(\mathcal{H}^h\right)^\bot$ there exists a unique $\gamma\in W^{2\ell,p}(M,E_0^h)\cap\left(\mathcal{H}^h\right)^\bot$ such that $\Delta_M\gamma=\alpha$.
Moreover, we have that 
$$\|\gamma\|_{W^{2\ell,p}}\leq c \|\alpha\|_{L^p},$$
for some constant $c>0$.
\end{teo}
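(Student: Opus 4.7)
Since $2<p<\infty$ and $M$ is compact, the plan is to start from the $L^2$-theory of Theorem \ref{teo.greenopregsol} and bootstrap up to $L^p$ by alternating the Sobolev embedding of Remark \ref{oss.SobembstimaCQ} with the global maximal hypoellipticity of Theorem \ref{teo.globalmaxhypocont}. Uniqueness is immediate: two solutions in $W^{2\ell,p}(M,E_0^h)\cap(\mathcal{H}^h)^\perp$ differ by an element of $\mathcal{H}^h\cap(\mathcal{H}^h)^\perp=\{0\}$.

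For existence, I would approximate $\alpha$ by a sequence of smooth forms orthogonal to the harmonic space: pick $\beta_j\in C^\infty(M,E_0^h)$ with $\beta_j\to\alpha$ in $L^p$ and set $\alpha_j:=\beta_j-H(\beta_j)$. Then $\alpha_j\in C^\infty(M,E_0^h)\cap(\mathcal{H}^h)^\perp$ (recall $\mathcal{H}^h\subset C^\infty$ by Remark \ref{oss.hypoLaprum}) and $\alpha_j\to\alpha$ in $L^p$, using continuity of $H$ on $L^p$ from Proposition \ref{prop.harmoprojj}. Corollary \ref{coro.esistsolCinf} yields smooth potentials $\gamma_j\in C^\infty(M,E_0^h)\cap(\mathcal{H}^h)^\perp$ with $\Delta_M\gamma_j=\alpha_j$, and since $p>2$ on the compact manifold $M$, Theorem \ref{teo.greenopregsol} gives $\|\gamma_j\|_{W^{2\ell,2}}\le C\|\alpha_j\|_{L^2}\le C\|\alpha_j\|_{L^p}$.

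I now iterate the integrability. Define $q_0=2$ and $q_{k+1}=\min\{p,\,q_k(1+1/C_Q)\}$, and prove by induction that $\|\gamma_j\|_{W^{2\ell,q_k}}\le C_k\|\alpha_j\|_{L^p}$. For the inductive step, since $\gamma_j\in W^{2\ell,q_k}\subset W^{\ell,q_k}$ (compactness of $M$), Remark \ref{oss.SobembstimaCQ} gives the embedding into $L^{q_{k+1}}$, while compactness of $M$ combined with $q_{k+1}\le p$ yields $\|\alpha_j\|_{L^{q_{k+1}}}\le c\|\alpha_j\|_{L^p}$. Crucially, $\gamma_j$ is smooth, so Theorem \ref{teo.globalmaxhypocont} applies at the exponent $q_{k+1}$ and delivers
$$\|\gamma_j\|_{W^{2\ell,q_{k+1}}}\le C\bigl(\|\alpha_j\|_{L^{q_{k+1}}}+\|\gamma_j\|_{L^{q_{k+1}}}\bigr)\le C_{k+1}\|\alpha_j\|_{L^p}.$$
Because the ratio $1+1/C_Q$ is a constant strictly larger than $1$, the $q_k$ reach $p$ after a number of steps $N=N(p,C_Q)$ depending only on $p$ and the dimension, so the constants $C_k$ stay bounded.

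With the uniform bound $\|\gamma_j\|_{W^{2\ell,p}}\le C\|\alpha_j\|_{L^p}$ in hand, reflexivity of $W^{2\ell,p}$ furnishes a weakly convergent subsequence $\gamma_{j_k}\rightharpoonup\gamma$. Orthogonality to $\mathcal{H}^h$ passes to the weak $L^2$-limit, the equation $\Delta_M\gamma=\alpha$ passes to the distributional limit when tested against smooth forms (each $\alpha_j\to\alpha$ in $L^p$), and lower semicontinuity of the $W^{2\ell,p}$-norm delivers $\|\gamma\|_{W^{2\ell,p}}\le c\|\alpha\|_{L^p}$. The main obstacle is the bookkeeping in the bootstrap: one must keep working with the smooth approximations $\gamma_j$ so that Theorem \ref{teo.globalmaxhypocont} can be invoked at each intermediate $q_k$ without circularity, and one must verify that the geometric growth of the exponents actually reaches $p$ in finitely many steps with constants that remain under control.
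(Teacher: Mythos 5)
Your proof is correct and follows the same bootstrap skeleton as the paper: start from the $L^2$ potential of Theorem \ref{teo.greenopregsol}, then alternate the Sobolev embedding of Remark \ref{oss.SobembstimaCQ} with the global maximal hypoellipticity of Theorem \ref{teo.globalmaxhypocont}, raising the exponent by the fixed factor $1+1/C_Q$ until $p$ is reached in finitely many steps. The one genuine difference is your decision to run the iteration on smooth approximations $\gamma_j$ and recover $\gamma$ as a weak limit. The paper instead applies the estimate of Theorem \ref{teo.globalmaxhypocont} directly to $\gamma$ at each new exponent $q_{k+1}$ and reads off membership $\gamma\in W^{2\ell,q_{k+1}}$ from the finiteness of the right-hand side; strictly speaking that theorem is stated as an a priori inequality for forms already in $W^{2\ell,q_{k+1}}$, so the paper is implicitly invoking a regularity upgrade at each step. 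Your version sidesteps this by keeping everything smooth until the very end, at the modest cost of the limit bookkeeping (weak convergence, stability of $(\mathcal H^h)^\perp$, distributional identification of $\Delta_M\gamma=\alpha$, uniqueness to match the limit with the $L^2$ solution), all of which you handle correctly. So the two arguments buy the same theorem with the same ingredients; yours is marginally more scrupulous about the distinction between an a priori estimate and a regularity statement.
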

\begin{proof}
 Since $p>2$ and $M$ is compact, $\alpha\in L^2(M,E_0^h)\cap\left(\mathcal{H}^h\right)^\bot$, that implies, by Theorem \ref{teo.greenopregsol}, the existence of a unique $\gamma\in W^{2\ell,2}(M,E_0^h)\cap\left(\mathcal{H}^h\right)^\bot$ such that $\Delta_M\gamma=\alpha$.

We prove now the   regularity of $\gamma$ in $W^{2\ell,p}$. Using Remark \ref{oss.SobembstimaCQ} with $p_1:=2$ and $q_1:=\min\left(p_1+\frac{p_1}{C_Q}, p\right)$, and by the continuity of Green's operator for $L^2$-forms, we obtain that 
$$\|\gamma\|_{L^{q_1}}\leq c\|\gamma\|_{W^{\ell,p_1}}\leq c\|\alpha\|_{L^{p_1}}\leq c\|\alpha\|_{L^{q_1}}\leq c\|\alpha\|_{L^p}<\infty.$$
Thus, $\gamma\in L^{q_1}$ and $\|\gamma\|_{L^{q_1}}\leq c\|\alpha\|_{L^{q_1}}$. By  Theorem \ref{teo.globalmaxhypocont}), we have
$$\|\gamma\|_{W^{2\ell,q_1}}\leq c\left(\|\Delta_M\gamma\|_{L^{q_1}}+\|\gamma\|_{L^{q_1}}\right)=c\left(\|\alpha\|_{L^{q_1}}+\|\gamma\|_{L^{q_1}}\right)\leq c\|\alpha\|_{L^p},$$
showing that $\gamma\in W^{2\ell,q_1}$.

If $q_1=p$, then the theorem is proved. Otherwise, we can argue analogously with $p_2:=q_1$ and $q_2:=\min\left(p_2+\frac{p_2}{C_Q},p\right)$, and, after a finite number of steps $k$ we have $q_k>2+\frac{2k}{C_Q}$, hence there exists $k$ so that $q_k>p$ and we obtain that $\gamma\in W^{2\ell,p}$ and $\|\gamma\|_{W^{2\ell,p}}\leq c\|\alpha\|_{L^p}$, completing the proof.
\end{proof}

If $2<p<\infty$, the operator
$$G:L^p(M,E_0^\bullet)\longrightarrow W^{2\ell,p}(M,E_0^\bullet)\cap\left(\mathcal{H}^h\right)^\bot,$$
such that $G(\alpha)$ is the unique solution of $\Delta_{M}G(\alpha)=\alpha-H(\alpha)$, is well defined and it is a linear continuous operator (thanks  to Theorem \ref{teo.Lpreg2mpminfty}). We call $G$ the Green operator associated to $\Delta_{M}$.
\begin{oss}\label{oss.GopsimmCinf}
The  $G$ is a symmetric operator on $C^\infty$. Indeed, if $\alpha,\beta\in C^\infty(M,E_0^h)$, recalling the Hodge decomposition obtained for smooth forms and that $G(\alpha),G(\beta) \in\left(\mathcal{H}^h\right)^\bot$, then
\begin{align*}
(G(\alpha),\beta)&=(G(\alpha),H(\beta)+\Delta_MG(\beta))=(G(\alpha),\Delta_MG(\beta))\\
&=(\Delta_MG(\alpha),G(\beta))=(H(\alpha)+\Delta_MG(\alpha),G(\beta))=(\alpha,G(\beta)).
\end{align*}
\end{oss}

\bigskip

{\bf Case $1<p<2$.}

\medskip

We now study the $L^p$-regularity of the potential $\gamma$ when $1<p<2$.
Keeping in mind Definition \ref{defi.GreenopCinf},
the operator $G:C^\infty(M,E_0^h)\longrightarrow C^\infty(M,E_0^h)\cap\left(\mathcal{H}^h\right)^\bot\ .$
We prove the following estimate (see also Proposition 5.17 in \cite{scott}).
\begin{prop}\label{prop.regLp1mpm2}
Let $1<p<2$ and  $\ell=1 $ if $h\neq n, n+1$ and $\ell=2$ for $h=n,n+1$. There exists a constant $c>0$, such that, for every $\alpha\in C^\infty(M,E_0^h)$, we have
$$\|G(\alpha)\|_{W^{2\ell,p}(M,E_0^h)}\leq c\|\alpha\|_{L^p(M,E_0^h)}.$$
\end{prop}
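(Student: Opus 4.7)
The plan is to use a duality argument that leverages the $L^{p'}$-regularity already established (via Theorem \ref{teo.Lpreg2mpminfty}) in the range $p'>2$, combined with the symmetry of $G$ on smooth forms (Remark \ref{oss.GopsimmCinf}) and the $L^p$-maximal hypoellipticity of $\Delta_M$ from Theorem \ref{teo.globalmaxhypocont}. Since $\alpha\in C^\infty(M,E_0^h)$, the form $G(\alpha)$ is automatically smooth (Corollary \ref{coro.esistsolCinf}), so all pairings below are classically well-defined and the task is really to establish an a priori estimate.

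The first step is to obtain the $L^p$-bound $\|G(\alpha)\|_{L^p}\leq c\|\alpha\|_{L^p}$ by duality. Since $M$ is compact, $C^\infty(M,E_0^h)$ is dense in $L^{p'}(M,E_0^h)$, and therefore
\[
\|G(\alpha)\|_{L^p}=\sup\bigl\{\,|(G(\alpha),\beta)|\,:\,\beta\in C^\infty(M,E_0^h),\ \|\beta\|_{L^{p'}}\le 1\bigr\}.
\]
For any such smooth $\beta$, the symmetry of $G$ on smooth forms yields $(G(\alpha),\beta)=(\alpha,G(\beta))$, and H\"older's inequality gives $|(\alpha,G(\beta))|\le\|\alpha\|_{L^p}\|G(\beta)\|_{L^{p'}}$. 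Because $p'>2$, Theorem \ref{teo.Lpreg2mpminfty} applied to $\beta-H(\beta)\in L^{p'}\cap(\mathcal{H}^h)^\bot$ (together with $L^{p'}$-continuity of $H$ from Proposition \ref{prop.harmoprojj}) gives $\|G(\beta)\|_{W^{2\ell,p'}}\le c\|\beta\|_{L^{p'}}$, hence $\|G(\beta)\|_{L^{p'}}\le c\|\beta\|_{L^{p'}}$. Taking the supremum produces the desired $L^p$-bound.

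The second step upgrades this to a $W^{2\ell,p}$-bound through maximal hypoellipticity. Since $G(\alpha)\in C^\infty\subset W^{2\ell,p}$, Theorem \ref{teo.globalmaxhypocont} applies and gives
\[
\|G(\alpha)\|_{W^{2\ell,p}}\le c\bigl(\|\Delta_M G(\alpha)\|_{L^p}+\|G(\alpha)\|_{L^p}\bigr)=c\bigl(\|\alpha-H(\alpha)\|_{L^p}+\|G(\alpha)\|_{L^p}\bigr).
\]
Bounding $\|H(\alpha)\|_{L^p}\le c\|\alpha\|_{L^p}$ via Proposition \ref{prop.harmoprojj} and substituting the $L^p$-estimate from the first step concludes the proof.

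The main (and really only) obstacle is the $L^p$-control of $G(\alpha)$ itself: once this is in hand, maximal hypoellipticity essentially writes the Sobolev estimate for free. The delicate point in the duality step is that symmetry of $G$ has been proved only on smooth forms, which is why one must restrict the supremum in the dual characterization of the $L^p$-norm to the dense subset $C^\infty(M,E_0^h)\subset L^{p'}(M,E_0^h)$, and rely on the compactness of $M$ to ensure this density.
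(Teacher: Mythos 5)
Your proof is correct and follows essentially the same route as the paper: duality via the symmetry of $G$ on smooth forms combined with the $L^{p'}$-continuity of $G$ for $p'>2$ (Theorem \ref{teo.Lpreg2mpminfty}), followed by the maximal hypoellipticity estimate of Theorem \ref{teo.globalmaxhypocont}. The only cosmetic difference is that you invoke the abstract dual characterization of the $L^p$-norm over the unit ball of smooth $L^{p'}$-forms, whereas the paper tests against the explicit near-extremal sequence $\beta_k=G(\alpha)\bigl(|G(\alpha)|^2+\tfrac{1}{k}\bigr)^{(p-2)/2}$; both are valid realizations of the same idea.
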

\begin{proof}
Let $\alpha\in C^\infty(M,E_0^h)$ and let $G(\alpha)\in C^\infty(M,E_0^h)\cap\left(\mathcal{H}^h\right)^\bot$. Since $M$ is compact, $\alpha\in L^p(M,E_0^h)$ and $G(\alpha)\in W^{2\ell,p}(M,E_0^h)\cap\left(\mathcal{H}^h\right)^\bot$.
Let 
$$\beta_k:=G(\alpha)\left(|G(\alpha)|^2+\frac{1}{k}\right)^{\frac{p-2}{2}}\in C^\infty(M,E_0^h),\quad\forall\,k\in\N.$$
Then, if $\frac{1}{p}+\frac{1}{p'}=1$, 
$$\|\beta_k\|_{L^{p'}}^{p'}=\int_M\,|\beta_k|^{p'}dV=\int_M\,|G(\alpha)|^{p'}\,\left(|G(\alpha)|^2+\frac{1}{k}\right)^{\frac{p-2}{2}p'}\,\ dV.$$
Recalling that $1<p<2$, we have that $\frac{p-2}{2}p'<0$, so that $|\beta_k|^{p'}\leq |G(\alpha)|^p\in L^1(M,E_0^h)$ for every $k\in\N$, and $|\beta_k|^{p'}\to|G(\alpha)|^p$ as $k\to+\infty$. By the dominated convergece theorem,
$$\|\beta_k\|_{L^{p'}}^{p'}\xrightarrow{k\to+\infty}\|G(\alpha)\|_{L^p}^p,$$
and that
$$|(G(\alpha),\beta_k)|=\left|\int_M\,<G(\alpha),G(\alpha)>\,\left(|G(\alpha)|^2+\frac{1}{k}\right)^{\frac{p-2}{2}}\,\ dV\right| \xrightarrow{k\to+\infty} \|G(\alpha)\|_{L^p}^p.$$
Thus, for every $\e>0$, there exists $k_\e\in\N$ such that $\|G(\alpha)\|_{L^p}^p\leq |(G(\alpha),\beta_k)|+\e$, for every $k\geq k_\e$.
Using the continuity of  operator $G$ for $L^{p'}$-forms (since $p'>2$), we have
\begin{align*}
\|G(\alpha)\|_{L^p}^p&\leq |(G(\alpha),\beta_k)|+\e \stackrel{\text{Remark \ref{oss.GopsimmCinf}}}{=} |(\alpha,G(\beta_k))|+\e \leq c\|\alpha\|_{L^p}\|G(\beta_k)\|_{L^{p'}}+\e\\
&\leq c \|\alpha\|_{L^p}\|\beta_k\|_{L^{p'}}+\e\xrightarrow{k\to+\infty} c \|\alpha\|_{L^p}\|G(\alpha)\|_{L^{p}}^{p/p'}+\e\xrightarrow{\e\to 0} c \|\alpha\|_{L^p}\|G(\alpha)\|_{L^{p}}^{p/p'}.
\end{align*}
Thus, $\|G(\alpha)\|_{L^p}\leq c \|\alpha\|_{L^p}$. By Theorem \ref{teo.globalmaxhypocont} and by the continuity of harmonic projection in $L^p$, we have
\begin{align*}
\|G(\alpha)\|_{W^{2\ell,p}}&\leq c\left(\|\Delta_M G(\alpha)\|_{L^p}+\|G(\alpha)\|_{L^p}\right)= c\left(\|\alpha-H(\alpha)\|_{L^p}+\|G(\alpha)\|_{L^p}\right)\\
&\leq c\left(\|\alpha\|_{L^p}+\|H(\alpha)\|_{L^p}\right)\leq c\|\alpha\|_{L^p},
\end{align*}
completing the proof.
\end{proof}
Since $C^\infty$ is dense in $L^p$, we may extend
$$G:L^p(M,E_0^h)\longrightarrow W^{2\ell,p}(M,E_0^h)\cap\left(\mathcal{H}^h\right)^\bot$$
as the unique continuous extension (by Proposition \ref{prop.regLp1mpm2}) of the operator  $G:C^\infty(M,E_0^h)\longrightarrow W^{2\ell,p}(M,E_0^h)\cap\left(\mathcal{H}^h\right)^\bot.$
\begin{oss}\label{venerdi}
If $1<p<2$ and $\alpha\in L^p(M,E_0^h)$, then $\Delta_M G(\alpha)=\alpha-H(\alpha)$, where $G$ is the operator defined above.
\end{oss}
\begin{proof}
 Indeed, let $(\alpha_k)_k\subseteq C^\infty(M,E_0^h)$ such that $\alpha_k\to\alpha$ in $L^p$. Then, $\Delta_M G(\alpha_k)=\alpha_k-H(\alpha_k)$ since $\alpha_k$ is a smooth form, for every $k\in\N$, and, for every $\varphi\in C^\infty(M,E_0^h)$,
\begin{align*}
0&=(\Delta_{M}G(\alpha_k)-(\alpha_k-H(\alpha_k)), \varphi)\\
&=(G(\alpha_k),\Delta_M\varphi)-(\alpha_k-H(\alpha_k),\varphi)\xrightarrow{k\to+\infty}(G(\alpha),\Delta_M\varphi)-(\alpha-H(\alpha),\varphi).
\end{align*}
Thus, $\Delta_M G(\alpha)=\alpha-H(\alpha)$.
\end{proof}

\medskip

Putting together every remarks on $L^p$-forms for the case $1<p<2$ and the case $2<p<\infty$, we have obtained a regularity result for the potential of a form  in $L^p(M,E_0^h)\cap\left(\mathcal{H}^h\right)^\bot$ as indicated in the next statement.
\begin{teo}\label{teo.Lpreggeneral}
Let  $1<p<\infty$. For every $\alpha\in L^p(M,E_0^h)\cap\left(\mathcal{H}^h\right)^\bot$ there exists a unique $\gamma\in W^{2\ell,p}(M,E_0^h)\cap\left(\mathcal{H}^h\right)^\bot$ such that $\Delta_M\gamma=\alpha$.
Moreover, 
$$\|\gamma\|_{W^{2\ell,p}}\leq c \|\alpha\|_{L^p}.$$
Equivalently, the operator$G$ is well defined on $L^p$-forms as
$$G:L^p(M,E_0^h)\longrightarrow W^{2\ell,p}(M,E_0^h)\cap\left(\mathcal{H}^h\right)^\bot$$
such that $\Delta_M G(\alpha)=\alpha-H(\alpha)$ and $G$ is a bounded linear operator.
\end{teo}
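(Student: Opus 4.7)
The plan is to patch together the three regimes $p=2$, $2<p<\infty$, and $1<p<2$, which have already been handled individually earlier in the paper, and then to check uniqueness and the identity $\Delta_M G(\alpha)=\alpha-H(\alpha)$ uniformly in $p$.

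For \emph{existence and the $W^{2\ell,p}$-estimate}, I would treat the three cases separately. If $p=2$, the existence of $\gamma\in W^{2\ell,2}(M,E_0^h)\cap(\mathcal{H}^h)^\bot$ with $\Delta_M\gamma=\alpha$ and the bound $\|\gamma\|_{W^{2\ell,2}}\le C\|\alpha\|_{L^2}$ is exactly the content of Theorem \ref{teo.greenopregsol}. If $2<p<\infty$, since $M$ is compact we have $L^p\hookrightarrow L^2$ and the conclusion (together with the $W^{2\ell,p}$-estimate) is Theorem \ref{teo.Lpreg2mpminfty}. The remaining case $1<p<2$ is handled by duality: Proposition \ref{prop.regLp1mpm2} provides the a priori estimate $\|G(\alpha)\|_{W^{2\ell,p}}\le c\|\alpha\|_{L^p}$ for every smooth $\alpha$, so since $C^\infty(M,E_0^h)$ is dense in $L^p(M,E_0^h)$, the operator $G\colon C^\infty\to W^{2\ell,p}\cap(\mathcal{H}^h)^\bot$ admits a unique continuous linear extension to all of $L^p(M,E_0^h)$, still satisfying the same bound. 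Setting $\gamma:=G(\alpha)$ gives a candidate with the right norm estimate.

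For the \emph{equation} $\Delta_M\gamma=\alpha-H(\alpha)$ in the range $1<p<2$, I would proceed as in Remark \ref{venerdi}: approximating $\alpha$ in $L^p$ by smooth $\alpha_k$, each smooth identity $\Delta_M G(\alpha_k)=\alpha_k-H(\alpha_k)$ passes to the distributional limit in the duality $(\cdot,\varphi)$ against any test form $\varphi\in C^\infty(M,E_0^h)$, using the continuity of $G$ in $L^p$, of $H$ in $L^p$ (Proposition \ref{prop.harmoprojj}), and the self-adjointness of $\Delta_M$. In particular, taking $\alpha\in L^p\cap(\mathcal{H}^h)^\bot$ one gets $H(\alpha)=0$ and hence $\Delta_M\gamma=\alpha$ in the sense of distributions; the maximal hypoellipticity estimate of Theorem \ref{teo.globalmaxhypocont} (in its $W^{2\ell,p}$-form, valid by the density argument already used in Theorem \ref{teo.greenopregsol}) guarantees that this distributional identity is actually the correct one in $W^{2\ell,p}$.

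\emph{Uniqueness} is the soft part: if $\gamma_1,\gamma_2\in W^{2\ell,p}\cap(\mathcal{H}^h)^\bot$ both solve $\Delta_M\gamma_i=\alpha$, then $\eta:=\gamma_1-\gamma_2$ satisfies $\Delta_M\eta=0$ in the distributional sense, so by Remark \ref{oss.hypoLaprum} we have $\eta\in\mathcal{H}^h\subset C^\infty(M,E_0^h)$; but $\eta\in(\mathcal{H}^h)^\bot$ as well, which forces $\eta=0$. Finally, $G$ is linear as a consequence of the linearity already established on $C^\infty$ combined with the density/continuity passage. I do not expect any serious obstacle here, since the real work has been done in Theorems \ref{teo.greenopregsol}, \ref{teo.globalmaxhypocont}, \ref{teo.Lpreg2mpminfty} and Proposition \ref{prop.regLp1mpm2}; the mildly delicate point is ensuring, in the regime $1<p<2$, that the continuous extension of $G$ indeed lands in the orthogonal complement $(\mathcal{H}^h)^\bot$, which follows because this is a closed subspace of $L^p$ (Proposition \ref{prop.LpconvHharm}) and already contains each $G(\alpha_k)$ for $\alpha_k\in C^\infty$.
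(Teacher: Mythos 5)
Your proposal is correct and follows essentially the same route as the paper, which likewise obtains Theorem \ref{teo.Lpreggeneral} by simply assembling Theorem \ref{teo.greenopregsol} ($p=2$), Theorem \ref{teo.Lpreg2mpminfty} ($2<p<\infty$), and Proposition \ref{prop.regLp1mpm2} together with the density extension and Remark \ref{venerdi} ($1<p<2$). The extra details you supply on uniqueness and on the extension landing in $\left(\mathcal{H}^h\right)^\bot$ are accurate and consistent with the paper's (largely implicit) argument.
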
\bigskip
By an argument analogous to the one used in proof of Lemma \ref{lemma.orthLpforms} we can obtain also the orthogonality on $L^p$-forms.
\begin{lemma}\label{lemma.orthLpforms bis}
Let $1<p<\infty$. We consider $s=2$ if $h=n+1$, and $s=1$ otherwise. In addition let $r=2$ if $h=n$ and $r=1$ otherwise. If $\alpha\in W^{s,p}(M,E_0^{h-1})$, $\beta\in W^{r,p}(M,E_0^{h+1})$ and $\xi\in\mathcal{H}^h$ such that $0=\d_C\alpha+\delta_C\beta+\xi$, then $\d_C\alpha=\delta_C\beta=\xi=0$.
\end{lemma}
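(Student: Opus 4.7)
The plan is to imitate the proof of Lemma \ref{lemma.orthLpforms} verbatim, replacing the $L^2$ inner product by the $L^p$–$L^{p'}$ duality pairing. The key observation is that $\d_C\alpha$ and $\delta_C\beta$ both lie in $L^p(M,E_0^h)$ by the choice of the exponents $s,r$ (which match the order of the corresponding differential operator), and $\xi\in\mathcal{H}^h\subset C^\infty(M,E_0^h)\subset L^p(M,E_0^h)$ by Remark \ref{oss.hypoLaprum}. So every term in the identity $\d_C\alpha+\delta_C\beta+\xi=0$ makes sense as an element of $L^p$, and it suffices to test against arbitrary smooth forms.

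First I would show that $(\d_C\alpha,\varphi)=0$ for every $\varphi\in C^\infty(M,E_0^h)$. Using the smooth Hodge decomposition \eqref{eq.decompsmoothform}, write $\varphi=H(\varphi)+\d_C\varphi_1+\delta_C\varphi_2$ with $\varphi_1\in C^\infty(M,E_0^{h-1})$, $\varphi_2\in C^\infty(M,E_0^{h+1})$. Since each of $H(\varphi),\varphi_1,\varphi_2$ is smooth, the formal adjoint identities $(\d_C u,v)=(u,\delta_C v)$ and $(\delta_C u,v)=(u,\d_C v)$ hold whenever $u\in L^p$ has its relevant Rumin derivative in $L^p$ and $v$ is smooth (this follows from Theorem \ref{teo.denseSobspHn} by a density argument: approximate $u$ by smooth forms in the appropriate $W^{m,p}$ norm, apply the adjoint identity for smooth forms and pass to the limit). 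Consequently:
\begin{itemize}
\item $(\d_C\alpha,H(\varphi))=(\alpha,\delta_C H(\varphi))=0$, since $\delta_C H(\varphi)=0$ by Proposition \ref{prop.characharmRum};
\item $(\d_C\alpha,\delta_C\varphi_2)=(\alpha,\delta_C^2\varphi_2)=0$, since $\delta_C^2=0$ (being the formal adjoint of $\d_C^2=0$);
\item $(\d_C\alpha,\d_C\varphi_1)=\bigl(\d_C\alpha+\delta_C\beta+\xi,\d_C\varphi_1\bigr)-(\delta_C\beta,\d_C\varphi_1)-(\xi,\d_C\varphi_1)=0$, using the hypothesis $\d_C\alpha+\delta_C\beta+\xi=0$, the identity $(\delta_C\beta,\d_C\varphi_1)=(\beta,\d_C^2\varphi_1)=0$, and $(\xi,\d_C\varphi_1)=(\delta_C\xi,\varphi_1)=0$ (harmonicity of $\xi$).
\end{itemize}
Since $\d_C\alpha\in L^p(M,E_0^h)$ and $C^\infty(M,E_0^h)$ is dense in $L^{p'}(M,E_0^h)$, we conclude $\d_C\alpha=0$ in $L^p$.

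Next I would show by the symmetric argument that $(\delta_C\beta,\varphi)=0$ for every $\varphi\in C^\infty(M,E_0^h)$. Decomposing $\varphi$ the same way, the pairings $(\delta_C\beta,H(\varphi))$ and $(\delta_C\beta,\d_C\varphi_1)$ vanish by analogous integration by parts using $\d_C H(\varphi)=0$ and $\d_C^2=0$, while $(\delta_C\beta,\delta_C\varphi_2)=(\d_C\alpha+\delta_C\beta+\xi,\delta_C\varphi_2)-(\d_C\alpha,\delta_C\varphi_2)-(\xi,\delta_C\varphi_2)=0$ using the previous step, $\d_C\xi=0$, and the hypothesis. Thus $\delta_C\beta=0$ in $L^p$. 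Finally, plugging $\d_C\alpha=\delta_C\beta=0$ back into $\d_C\alpha+\delta_C\beta+\xi=0$ yields $\xi=0$.

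The argument is essentially routine once one notes that all the integration by parts steps remain valid in the $L^p$–$L^{p'}$ pairing, and the only ingredient that had to be upgraded from the $L^2$ case is the justification of formal adjointness for $L^p$ forms against smooth test forms — which is immediate from the density of $C^\infty$ in $W^{m,p}$ on the compact manifold $M$. There is no real obstacle; the heaviest lifting (the smooth Hodge decomposition and the characterization of harmonic forms) is already supplied by \eqref{eq.decompsmoothform} and Proposition \ref{prop.characharmRum}.
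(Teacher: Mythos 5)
Your proposal is correct and follows exactly the route the paper intends: the paper gives no separate proof of Lemma \ref{lemma.orthLpforms bis}, stating only that it follows ``by an argument analogous to the one used in the proof of Lemma \ref{lemma.orthLpforms}'', and your write-up is precisely that $L^2$ argument transported to the $L^p$--$L^{p'}$ pairing, with the integration-by-parts steps justified by density of smooth forms in $W^{s,p}$, $W^{r,p}$ and in $L^{p'}$ on the compact manifold. No gaps.
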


\medskip

\begin{proof}[Proof of Theorem \ref{teo.HodgedecthLp}]
Let $\alpha\in L^p(M,E_0^h)$. By Theorem \ref{teo.Lpreggeneral},  $G(\alpha)\in W^{2\ell,p}(M,E_0^h)\cap\left(\mathcal{H}^h\right)^\bot$. An argument analogous to the one used for the smooth forms gives the  the decompositions \eqref{eq.decompLphodge} (again, $\ell=1$ in degree $h\neq n, n+1$ and $\ell=2$ in middle degrees).
We prove the uniqueness. We already know that $H(\alpha)$ is unique, thus if $\alpha$ can be decomposed as
$$\alpha=\d_C\omega+\delta_C\zeta+H(\alpha)=\d_C\widetilde\omega+\delta_C\widetilde\zeta+H(\alpha),$$
then $\d_C(\omega-\widetilde\omega)+\delta_C(\zeta-\widetilde\zeta)=0$. By Lemma \ref{lemma.orthLpforms bis} get the uniqueness.
\end{proof}
As for smooth forms, the previous decomposition can be written in the following way:
\begin{equation}\label{eq.decompLppWGhodge}
L^p(M,E_0^h)=\mathcal{H}^h\oplus\left\{
\begin{array}{ll}
\d_C\delta_C G\left(L^p(M,E_0^h)\right)\oplus\delta_C\d_C G\left(L^p(M,E_0^h)\right),&\text{if $h\neq n,n+1$},\\
\d_C\delta_C\d_C\delta_C G\left(L^p(M,E_0^h)\right)\oplus\delta_C\d_C G\left(L^p(M,E_0^h)\right),&\text{if $h=n$},\\
\d_C\delta_C G\left(L^p(M,E_0^h)\right)\oplus\delta_C\d_C\delta_C\d_C G\left(L^p(M,E_0^h)\right),&\text{if $h=n+1$}.
\end{array}
\right. 
\end{equation}
%
%
%

\subsection{Poincaré-type inequalities for $L^p$-Rumin forms}\label{sec:poincineq}

As a corollary of our main results, we can establish a more refined version of the inequalities that we proved in Theorem \ref{teo.scottstimaperpoinc}.
\begin{teo}\label{teo.poincineqcontman}
Let $(M,H,g^M)$ be a sub-Riemannian compact contact manifold without boundary,  and let $1<p<\infty$. Then, there exists $C_{p,M}>0$ such that
\begin{itemize}
	\item[i)]
for every $\alpha\in W^{1,p}(M,E_0^h)$,
$$
\|\alpha-H(\alpha)\|_{W^{1,p}(M,E_0^h)}\leq C_{p,M}\Big(\|\d_C\alpha\|_{L^p(M,E_0^{h+1})}+\|\delta_C\alpha\|_{L^p(M,E_0^{h-1})}\Big),\quad\text{if $h\neq n,n+1$}; 
$$
\item[ii)] for every $\alpha\in W^{2,p}(M,E_0^n)$,
$$
\|\alpha-H(\alpha)\|_{W^{2,p}(M,E_0^n)}\leq C_{p,M}\Big(\|\d_C\alpha\|_{L^p(M,E_0^{n+1})}+\|\d_C\delta_C\alpha\|_{L^p(M,E_0^n)}\Big); $$
\item[iii)] for every $\alpha\in W^{2,p}(M,E_0^{n+1})$,
$$
\|\alpha-H(\alpha)\|_{W^{2,p}(M,E_0^{n+1})}\leq C_{p,M}\Big(\|\delta_C\alpha\|_{L^p(M,E_0^n)}+\|\delta_C\d_C\alpha\|_{L^p(M,E_0^{n+1})}\Big).
$$
\end{itemize}
\end{teo}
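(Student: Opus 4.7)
The plan is to reduce Theorem \ref{teo.poincineqcontman} to Theorem \ref{teo.scottstimaperpoinc} by the obvious splitting $\alpha = (\alpha - H(\alpha)) + H(\alpha)$. By Proposition \ref{prop.harmoprojj}, the harmonic projection $H(\alpha)$ is well-defined for any $\alpha \in L^p(M,E_0^h)$ and $\alpha - H(\alpha) \in \left(\mathcal{H}^h\right)^\bot$. Moreover, by Remark \ref{oss.hypoLaprum}, we have $\mathcal{H}^h \subset C^\infty(M,E_0^h)$, so that $H(\alpha)$ is smooth; in particular, if $\alpha \in W^{\ell,p}(M,E_0^h)$ (with $\ell$ equal to $1$ or $2$ depending on the degree), then $\alpha - H(\alpha) \in W^{\ell,p}(M,E_0^h) \cap \left(\mathcal{H}^h\right)^\bot$.

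Next I would invoke Proposition \ref{prop.characharmRum}, which ensures that any harmonic form $\xi$ satisfies $\d_C \xi = \delta_C \xi = 0$. In particular, $\d_C H(\alpha) = 0$ and $\delta_C H(\alpha) = 0$, and consequently also $\d_C\delta_C H(\alpha) = 0$ and $\delta_C \d_C H(\alpha) = 0$. Therefore
\[
\d_C(\alpha - H(\alpha)) = \d_C \alpha, \qquad \delta_C(\alpha - H(\alpha)) = \delta_C \alpha,
\]
and, in the middle degrees, $\d_C\delta_C(\alpha - H(\alpha)) = \d_C\delta_C \alpha$ and $\delta_C\d_C(\alpha - H(\alpha)) = \delta_C\d_C \alpha$.

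At this point, applying the appropriate inequality from Theorem \ref{teo.scottstimaperpoinc} to the form $\alpha - H(\alpha) \in W^{\ell,p}(M,E_0^h) \cap \left(\mathcal{H}^h\right)^\bot$ immediately yields the three claimed estimates. For instance, in the case $h \neq n, n+1$ we obtain
\[
\|\alpha - H(\alpha)\|_{W^{1,p}} \leq C_{p,M}\bigl(\|\d_C(\alpha - H(\alpha))\|_{L^p} + \|\delta_C(\alpha - H(\alpha))\|_{L^p}\bigr) = C_{p,M}\bigl(\|\d_C\alpha\|_{L^p} + \|\delta_C\alpha\|_{L^p}\bigr),
\]
and the cases $h = n$ and $h = n+1$ follow in the same way from parts (ii) and (iii) of Theorem \ref{teo.scottstimaperpoinc}.

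There is no real obstacle here: the genuine analytic content (the contradiction argument based on the Gaffney inequality of Theorem \ref{teo.GaffIneqcontman} together with the compact embedding of Remark \ref{teo.compactembcontact}) is already encoded in Theorem \ref{teo.scottstimaperpoinc}. The only point to verify carefully is the regularity of $H(\alpha)$, which guarantees $\alpha - H(\alpha)$ lies in the same Sobolev class as $\alpha$, and the algebraic fact that subtracting a harmonic form does not alter $\d_C\alpha$, $\delta_C\alpha$ (nor the second-order combinations in middle degree). Both are straightforward consequences of results already established in the paper.
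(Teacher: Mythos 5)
Your proof is correct, and it is in fact more direct than the one in the paper. The paper's own argument first invokes the full $L^p$-Hodge decomposition of Theorem \ref{teo.HodgedecthLp} to write $\alpha=H(\alpha)+\d_C\omega+\delta_C\zeta$, then must verify that the two pieces $\d_C\omega$ and $\delta_C\zeta$ individually belong to $W^{\ell,p}(M,E_0^h)\cap\left(\mathcal{H}^h\right)^\bot$ --- which requires the identification $\mathcal{L}^{1,p}=W^{1,p}$ (and its middle-degree analogues) from Theorem \ref{spazi uguali}, proved via the Gaffney machinery of Appendix B --- and only then applies Theorem \ref{teo.scottstimaperpoinc} to each piece separately, recombining $\delta_C\d_C\omega=\delta_C\alpha$ and $\d_C\delta_C\zeta=\d_C\alpha$ at the end. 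You instead apply Theorem \ref{teo.scottstimaperpoinc} once, directly to $\alpha-H(\alpha)$, which is legitimate: $H(\alpha)$ is well defined on $L^p$ and $\alpha-H(\alpha)\in\left(\mathcal{H}^h\right)^\bot$ by Proposition \ref{prop.harmoprojj}, $H(\alpha)$ is smooth (Remark \ref{oss.hypoLaprum}) hence lies in every $W^{m,p}$ on the compact manifold $M$, and it is closed and coclosed by Proposition \ref{prop.characharmRum}, so subtracting it alters neither $\d_C\alpha$ and $\delta_C\alpha$ nor the second-order combinations $\d_C\delta_C\alpha$, $\delta_C\d_C\alpha$ in middle degree. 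This bypasses Theorem \ref{teo.HodgedecthLp} and Theorem \ref{spazi uguali} entirely; what the paper's longer route buys as a by-product is the structural information that the exact and coexact components of a $W^{\ell,p}$ form each gain a full degree of Sobolev regularity, but that is not needed for the stated inequalities.
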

\begin{proof}
Let $\alpha\in W^{\ell,p}(M,E_0^h)$, with $\ell=1$ $h\neq n,n+1$ and $2$ otherwise.
By Theorem \ref{teo.HodgedecthLp}, there exist $\omega\in W^{s,p}(M,E_0^{h-1})$ and $\zeta\in W^{r,p}(M,E_0^{h+1})$ such that
 \begin{equation}\label{prova}
	 \alpha=H(\alpha)+\d_C\omega+\delta_C\zeta,
 \end{equation}
with $s,r$ equals $1$ or $2$ as specified below, depending on the degree $h$.
\noindent{Case $h\neq n,n+1$}. We already know that $\omega , \zeta\in W^{1,p}\cap\left(\mathcal{H}\right)^\bot$. Since $\alpha\in W^{1,p}(M,E_0^h)$, we have that $\d_C(\delta_C\zeta)=\d_C\alpha\in L^p(M,E_0^{h+1})$ and $\delta_C(\d_C\omega)=\delta_C\alpha\in L^p(M,E_0^{h-1})$. Moreover, $\d_C(\d_C\omega)=\delta_C(\delta_C\zeta)=0$, implying that $\d_C\omega, \delta_C\zeta\in \mathcal{L}^{1,p}(M,E_0^h)$. By Theorem \ref{spazi uguali}, we have $\d_C\omega, \delta_C\zeta\in W^{1,p}\cap\left(\mathcal{H}\right)^\bot$.

Hence, by Theorem \ref{teo.scottstimaperpoinc} to $\d_C\omega$ and $\delta_C\zeta$:
\begin{align*}
\|\alpha-H(\alpha)\|_{W^{1,p}}&\leq\|\d_C\omega\|_{W^{1,p}}+\|\delta_C\zeta\|_{W^{1,p}} \\
&{\leq} C_{p,M}\Big(\|\d^2_C\omega\|_{L^p}+\|\delta_C\d_C\omega\|_{L^p}+\|\d_C\delta_C\zeta\|_{L^p}+\|\delta^2_C\d_C\zeta\|_{L^p_M}\Big) \\
&=C_{p,M}\Big(\|\delta_C\d_C\omega\|_{L^p_M}+\|\d_C\delta_C\zeta\|_{L^p_M}\Big) \\
&= C_{p,M}\Big(\|\delta_C\alpha\|_{L^p_M}+\|\d_C\alpha\|_{L^p_M}\Big).
\end{align*}

\noindent If $h=n$ (the case $h=n+1$ is analogous), arguing as in previous case, we have that $\d_C\omega,\delta_C\gamma\in W^{2,p}(M,E_0^n)$. Thus,
\begin{align*}
\|\alpha-H(\alpha)\|_{W^{2,p}}&\leq\|\d_C\omega\|_{W^{2,p}}+\|\delta_C\zeta\|_{W^{2,p}} \\
&{\leq} C_{p,M}\Big(\|\d^2_C\omega\|_{L^p}+\|\d_C\delta_C\d_C\omega\|_{L^p}+\|\d_C\delta_C\zeta\|_{L^p}+\|\d_C\delta^2_C\zeta\|_{L^p}\Big) \\
&=C_{p,M}\Big(\|\d_C\delta_C\d_C\omega\|_{L^p}+\|\d_C\delta_C\zeta\|_{L^p}\Big) \\
&= C_{p,M}\Big(\|\d_C\delta_C\alpha\|_{L^p}+\|\d_C\alpha\|_{L^p}\Big),
\end{align*}
completing the proof.
\end{proof}

\section{Appendix A: Morrey-type variational approach for $L^2$-decomposition}\label{appendix}
With the notation introduced in Section \ref{HodgeL2}, we add here the proofs of Proposition \ref{teo.findimkerlap} and Theorem \ref{teo.733morreyexistence}. Both proofs are very related to the Gaffney inequality given in Theorem \ref{teo.GaffIneqcontman}. We still assume that  $(M,H,g^M)$ is a sub-Riemannian compact contact manifold without boundary.

 First, thanks to Theorem \ref{teo.GaffIneqcontman}, we can define another norm on $W^{\ell,2}(M,E_0^\bullet)$, equivalent to the one defined previously. More precisely, we consider the equivalent norms $||\cdot||_G$ on $W^{\ell,2}(M,E_0^h)$ given by
\begin{equation*}
||\alpha||_G:=\left\{
\begin{array}{ll}
\|\d_C\alpha\|_{L^2}+\|\delta_C\alpha\|_{L^2}+\|\alpha\|_{L^2},\quad&\text{if $h\neq n,n+1$}, \\
\|\d_C\alpha\|_{L^2}+\|\d_C\delta_C\alpha\|_{L^2}+\|\alpha\|_{L^2},\quad&\text{if $h=n$}, \\
\|\delta_C\d_C\alpha\|_{L^2}+\|\delta_C\alpha\|_{L^2}+\|\alpha\|_{L^2},\quad&\text{if $h=n+1$}.
\end{array}\right.
\end{equation*}
Due to the equivalence of norms, we remark that $||\cdot||_G$ is lower semicontinuous with respect to the weak convergence in $W^{\ell,2}(M,E_0^h)$.

We need the following minimizing lemma for the functional $||\cdot||_G$ (see Lemma $7.3.1$ in \cite{morrey}).
\begin{lemma}\label{lemma.minpropWl2}
Let $V\subseteq L^2(M,E_0^h)$ be a closed vector subspace. Then, either $V\cap W^{\ell,2}(M,E_0^h)=\emptyset$ or there exists $\alpha\in V\cap W^{\ell,2}(M,E_0^h)$ with $\|\alpha\|_{L^2}=1$ such that $\alpha$ minimizes $||\cdot||_G$ among $V\cap W^{\ell,2}(M,E_0^h)$.
\end{lemma}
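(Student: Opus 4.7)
The plan is to apply the direct method of the calculus of variations. Assume $V \cap W^{\ell,2}(M, E_0^h)$ contains some form of nonzero $L^2$-norm (otherwise $V \cap W^{\ell,2}(M,E_0^h) = \{0\}$ and there is nothing to prove). Set
\[
m := \inf\{\|\alpha\|_G : \alpha \in V \cap W^{\ell,2}(M, E_0^h),\ \|\alpha\|_{L^2(M,E_0^h)} = 1\},
\]
which is finite (by normalizing any nonzero element of $V \cap W^{\ell,2}$) and $\geq 1$, and choose a minimizing sequence $(\alpha_k)_{k\in\N}\subseteq V\cap W^{\ell,2}(M,E_0^h)$ with $\|\alpha_k\|_{L^2}=1$ and $\|\alpha_k\|_G\to m$.

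First I would show that $(\alpha_k)$ is bounded in $W^{\ell,2}(M,E_0^h)$. This is immediate from the Gaffney inequality of Theorem \ref{teo.GaffIneqcontman} with $p=2$, which precisely asserts the equivalence between $\|\cdot\|_G$ and the $W^{\ell,2}$-norm; hence $\sup_k\|\alpha_k\|_{W^{\ell,2}}<+\infty$. By reflexivity of $W^{\ell,2}(M,E_0^h)$, I pass to a (not relabeled) subsequence weakly converging to some $\alpha \in W^{\ell,2}(M,E_0^h)$, that is $\alpha_k\rightharpoonup \alpha$.

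Next I would upgrade this to strong $L^2$-convergence via the compact embedding $W^{\ell,2}(M,E_0^h)\hookrightarrow L^2(M,E_0^h)$ recalled in Remark \ref{teo.compactembcontact}. Therefore $\alpha_k\to\alpha$ in $L^2$, giving $\|\alpha\|_{L^2}=1$ (so in particular $\alpha\neq 0$ lies in $V\cap W^{\ell,2}$, since $V$ is $L^2$-closed). Finally, the lower semicontinuity of $\|\cdot\|_G$ with respect to the weak convergence in $W^{\ell,2}$, noted immediately before the statement, yields
\[
\|\alpha\|_G \leq \liminf_{k\to +\infty}\|\alpha_k\|_G = m,
\]
so $\alpha$ realizes the infimum and is the desired minimizer.

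The argument is a textbook direct method; its content is bundled into the preparatory tools already at hand. The two essential ingredients are (i) the Gaffney inequality in Theorem \ref{teo.GaffIneqcontman}, which converts control of $\|\cdot\|_G$ into the $W^{\ell,2}$-boundedness needed to extract a weak limit, and (ii) the compact embedding $W^{\ell,2}\hookrightarrow L^2$ on a compact contact manifold, which guarantees that the weak limit keeps unit $L^2$-norm and remains inside the $L^2$-closed space $V$. I do not foresee any genuine obstacle; the only point requiring mild care is treating the cases $h\neq n,n+1$ (with $\ell=1$) and $h=n,n+1$ (with $\ell=2$) in parallel, applying the corresponding version of the Gaffney inequality.
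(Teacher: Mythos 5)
Your proof is correct and follows essentially the same route as the paper's: a minimizing sequence, $W^{\ell,2}$-boundedness via the Gaffney inequality of Theorem \ref{teo.GaffIneqcontman}, weak compactness, and weak lower semicontinuity of $\|\cdot\|_G$. You are in fact slightly more complete than the paper, since you invoke the compact embedding of Remark \ref{teo.compactembcontact} to check that the weak limit retains unit $L^2$-norm and remains in the $L^2$-closed subspace $V$ — a step the paper's proof leaves implicit.
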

\begin{proof}
Suppose $W:=V\cap W^{\ell,2}(M,E_0^h)\neq\emptyset$.
Let $(\alpha_k)_{k\in\N}\subseteq W$ be a minimizing sequence, i.e.\,$\dsy ||\alpha_k||_G\to\inf_W ||\cdot||_G$, such that $\|\alpha_k\|_{L^2}=1$. Then, by  Theorem \ref{teo.GaffIneqcontman}, 
$$\|\alpha_k\|_{W^{\ell,2}(M,E_0^h)}\leq C ||\alpha_k||_G\leq \widetilde C.$$
Thus, $(\alpha_k)_{k\in\N}$ is a bounded sequence in $W^{\ell,2}(M,E_0^h)$, that implies, up to pass to a subsequence, that $\alpha_k\rightharpoonup\alpha$ in $W^{\ell,2}(M,E_0^h)$. Since $||\cdot||_G$ is lower semicontinuous with respect to the weak convergence in $W^{\ell,2}(M,E_0^h)$, we have
$$\inf_W\,||\cdot||_G\leq ||\alpha||_G\leq \liminf_{k\to+\infty}\,||\alpha_k||_G=\inf_W\,||\cdot||_G,$$
completing the proof.
\end{proof}

The following results extends to our setting a result stated in \cite{morrey} (see Theorem $7.3.1$ therein).
\begin{prop}\label{teo.findimkerlap-appendice}
 Then, for every $h$, $\mathcal{H}^h$ has finite dimension.
\end{prop}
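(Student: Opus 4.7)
The plan is to realize $\mathcal{H}^h$ as a closed subspace of $L^2(M,E_0^h)$ on which the Gaffney inequality forces the $L^2$-norm to dominate a $W^{\ell,2}$-norm, so that the identity on $\mathcal{H}^h$ factors through a compact embedding and F.~Riesz's theorem applies.

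More precisely, I would proceed as follows. First, Proposition \ref{prop.LpconvHharm} ensures that $\mathcal{H}^h$ is closed in $L^2(M,E_0^h)$, hence itself a Banach space under the induced norm. Next, for any $\alpha\in\mathcal{H}^h$ one has $\alpha\in C^\infty(M,E_0^h)$ by hypoellipticity (Remark \ref{oss.hypoLaprum}), and by Proposition \ref{prop.characharmRum} the quantities that appear on the right-hand side of the Gaffney inequalities \eqref{eq.GaffIneqh}, \eqref{eq.GaffIneqn}, \eqref{eq.GaffIneqnp1} collapse to $\|\alpha\|_{L^2}$: indeed $\d_C\alpha=\delta_C\alpha=0$ when $h\neq n,n+1$, while $\d_C\alpha=\d_C\delta_C\alpha=0$ if $h=n$ and $\delta_C\alpha=\delta_C\d_C\alpha=0$ if $h=n+1$. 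Consequently, Theorem \ref{teo.GaffIneqcontman} applied with $p=2$ yields a constant $C>0$, depending only on $M$, such that
\begin{equation*}
\|\alpha\|_{W^{\ell,2}(M,E_0^h)}\leq C\|\alpha\|_{L^2(M,E_0^h)}\qquad\text{for every }\alpha\in\mathcal{H}^h,
\end{equation*}
with $\ell=2$ if $h=n,n+1$ and $\ell=1$ otherwise.

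From this estimate, the inclusion $\iota\colon\mathcal{H}^h\hookrightarrow L^2(M,E_0^h)$ (with $\mathcal{H}^h$ endowed with its $L^2$-topology) factors as
\begin{equation*}
\mathcal{H}^h\xrightarrow{\;\text{Id}\;}W^{\ell,2}(M,E_0^h)\xrightarrow{\;j\;}L^2(M,E_0^h),
\end{equation*}
where the first map is bounded by the displayed inequality and the second is compact by Remark \ref{teo.compactembcontact}. Therefore $\iota$ is a compact operator; but $\iota$ is just the identity on $\mathcal{H}^h$, so the closed unit ball of $\mathcal{H}^h$ is relatively compact in $L^2$. By F.~Riesz's theorem, $\mathcal{H}^h$ is finite dimensional.

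The only step that requires any care is to make sure the Gaffney inequality really is applicable to smooth harmonic forms in each of the three degree regimes; once this is observed the argument is mechanical, and no use of the minimization Lemma \ref{lemma.minpropWl2} is strictly needed for finite dimensionality (that lemma will rather enter in the proof of Theorem \ref{teo.733morreyexistence}).
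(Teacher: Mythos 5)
Your argument is correct, and it isolates exactly the analytic content that makes the paper's proof work: for a harmonic form the right-hand sides of the Gaffney inequalities \eqref{eq.GaffIneqh}--\eqref{eq.GaffIneqnp1} reduce to $\|\alpha\|_{L^2}$ by Proposition \ref{prop.characharmRum}, so the $L^2$-unit ball of $\mathcal{H}^h$ is bounded in $W^{\ell,2}$, hence relatively compact in $L^2$ by Remark \ref{teo.compactembcontact}, and closedness of $\mathcal{H}^h$ (Proposition \ref{prop.LpconvHharm}) plus F.~Riesz's theorem finishes the job. The paper reaches the same conclusion by a more roundabout route: it uses the minimization Lemma \ref{lemma.minpropWl2} to produce, step by step, an $L^2$-orthonormal sequence of minimizers of $\|\cdot\|_G$ on successive orthogonal complements, and then shows that if these were all harmonic they would be bounded in $W^{\ell,2}$ (again by Gaffney), so a subsequence would converge strongly in $L^2$ --- contradicting $\|\alpha_k-\alpha_m\|_{L^2}^2=2$. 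That contradiction is precisely the standard proof of Riesz's theorem unwound by hand, so the two arguments are mathematically equivalent; yours is shorter and, as you note, decouples finite-dimensionality from the variational machinery, which the paper only genuinely needs later for Theorem \ref{teo.733morreyexistence}. The one point worth keeping explicit (which you do address) is that a harmonic form lies in $W^{\ell,2}(M,E_0^h)$ in the first place --- this follows from smoothness via hypoellipticity together with compactness of $M$ --- since otherwise Theorem \ref{teo.GaffIneqcontman} could not be invoked.
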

\begin{proof}
Let $\alpha_0\in W^{\ell,2}(M,E_0^h)$ be a minimizing element of $||\cdot||_G$ among $W_0:=W^{\ell,2}(M,E_0^h)$, with $\|\alpha_0\|_{L^2}=1$ (that exists by Lemma \ref{lemma.minpropWl2}, where $V:=W^{\ell,2}(M,E_0^h)$). Iterating this process, we can define $V_k:=\left(\Span\{\alpha_0,\ldots,\alpha_{k-1}\}\right)^\bot$ (where the orthogonality is in $L^2$), and there exists $\alpha_k\in W_k:=V_k\cap W^{\ell,2}(M,E_0^h)$ such that minimizes $||\cdot||_G$ among $W_k$, with $\|\alpha_k\|_{L^2}=1$. In this way, we can construct a sequence of forms in $W^{\ell,2}(M,E_0^h)$ such that they are orthogonal to each other with unit norm in $L^2$.

We observe that $0\leq||\alpha_0||_G\leq||\alpha_1||_G\leq\cdots$. Now, if $||\alpha_0||_G>\|\alpha_0\|_{L^2}$, then, by the definition of $||\cdot||_G$ and by Proposition \ref{prop.characharmRum}, we have that at least one between (for example if $h\neq n, n+1$) $\|\delta_C\alpha_0\|_{L^2}, \|\d_C\alpha_0\|_{L^2}$ is different from $0$, i.e.\,there are no harmonic forms in $W^{\ell,2}(M,E_0^h)$ that are not identically zero.

On the other hand, if we suppose that $\alpha_k\in\mathcal{H}^h$, for every $k\in\N$, then $||\alpha_k||_G=\|\alpha_k\|_{L^2}=1$, for every $k$. By Theorem \ref{teo.GaffIneqcontman}, $(\alpha_k)_{k\in\N}$ is a bounded sequence in $W^{\ell,2}(M,E_0^h)$, thus there exists $\alpha\in W^{\ell,2}(M,E_0^h)$ such that $\alpha_k\rightharpoonup\alpha$ in $W^{\ell,2}(M,E_0^h)$ and $\alpha_k\to\alpha$ in $L^2(M,E_0^h)$.
But, by construction of $V_k$, we have that $\alpha_k\bot\alpha_m$ in $L^2$, for every $k,m$. Then
$$\|\alpha_k-\alpha_m\|^2_{L^2}=\|\alpha_k\|^2_{L^2}+\|\alpha_m\|^2_{L^2}=2,$$
proving that $(\alpha_k)_{k\in\N}$ cannot converge to $\alpha$, since it is not a Cauchy sequence. This absurd says us that $\mathcal{H}^h$ has finite dimension.
\end{proof}

\begin{prop}\label{teo.scottstimaperpoinc uno}
  There exists $C_{M}>0$ such that, for every $\alpha\in W^{\ell,2}(M,E_0^h)\cap\left(\mathcal{H}^h\right)^\bot$, we have
\begin{gather}\label{eq.poincdebole uno}
\begin{split}
\|\alpha\|_{W^{1,2}(M,E_0^h)}&\leq C_{M}\Big(\|\d_C\alpha\|_{L^2(M,E_0^{h+1})}+\|\delta_C\alpha\|_{L^2(M,E_0^{h-1})}\Big),\quad\text{if $h\neq n,n+1$}, \\
\|\alpha\|_{W^{2,2}(M,E_0^n)}&\leq C_{M}\Big(\|\d_C\alpha\|_{L^2(M,E_0^{n+1})}+\|\d_C\delta_C\alpha\|_{L^2(M,E_0^{n})}\Big),\quad\text{if $h=n$}, \\
\|\alpha\|_{W^{2,2}(M,E_0^{n+1})}&\leq C_{M}\Big(\|\delta_C\alpha\|_{L^2(M,E_0^n)}+\|\delta_C\d_C\alpha\|_{L^2(M,E_0^{n+1})}\Big),\quad\text{if $h=n+1$}.
\end{split}
\end{gather}
\end{prop}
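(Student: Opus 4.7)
\medskip

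\textbf{Proof proposal.} The plan is to argue by contradiction, mimicking the $L^p$-scheme of Theorem \ref{teo.scottstimaperpoinc} but in the Hilbert setting, so that weak $W^{\ell,2}$-compactness replaces Banach--Alaoglu and the compact embedding of Remark \ref{teo.compactembcontact} supplies strong $L^2$-convergence. The two ingredients that drive the argument are the Gaffney inequality of Theorem \ref{teo.GaffIneqcontman} and the characterization of harmonicity in Proposition \ref{prop.characharmRum}, combined with the $L^2$-closedness of $\mathcal{H}^h$ (Proposition \ref{prop.LpconvHharm}).

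I will treat the generic case $h\neq n, n+1$ first. Suppose the inequality fails; then for every $k\in\N$ there exists $\alpha_k\in W^{1,2}(M,E_0^h)\cap (\mathcal{H}^h)^\bot$ with $\|\alpha_k\|_{W^{1,2}}> k(\|\d_C\alpha_k\|_{L^2}+\|\delta_C\alpha_k\|_{L^2})$, and after rescaling we may assume $\|\alpha_k\|_{L^2}=1$. Applying item (i) of Theorem \ref{teo.GaffIneqcontman} and reabsorbing the $\frac{C}{k}\|\alpha_k\|_{W^{1,2}}$ contribution on the right-hand side shows that $(\alpha_k)$ is bounded in $W^{1,2}$ for $k$ large, while the defining inequality then forces $\|\d_C\alpha_k\|_{L^2}+\|\delta_C\alpha_k\|_{L^2}\to 0$.

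Extracting a subsequence, by the weak compactness of $W^{1,2}$ and the compact embedding $W^{1,2}(M,E_0^h)\hookrightarrow L^2(M,E_0^h)$ we obtain $\alpha_k\rightharpoonup \alpha$ in $W^{1,2}$ and $\alpha_k\to\alpha$ strongly in $L^2$, so $\|\alpha\|_{L^2}=1$. Passing to the limit in $(\alpha_k,\delta_C\varphi)=(\d_C\alpha_k,\varphi)$ and $(\alpha_k,\d_C\psi)=(\delta_C\alpha_k,\psi)$ for arbitrary smooth test forms $\varphi,\psi$ yields $\d_C\alpha=\delta_C\alpha=0$, so $\alpha\in\mathcal{H}^h$ by Proposition \ref{prop.characharmRum}. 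On the other hand the $L^2$-closedness of $(\mathcal{H}^h)^\bot$ inherited from Proposition \ref{prop.LpconvHharm} gives $\alpha\in(\mathcal{H}^h)^\bot$, hence $\alpha=0$, contradicting $\|\alpha\|_{L^2}=1$.

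For the middle degrees $h=n$ and $h=n+1$ I would run the same three steps based respectively on items (ii) and (iii) of Theorem \ref{teo.GaffIneqcontman}. The only technical point, and the main thing to watch, is that the normalization in these degrees controls $\|\d_C\alpha_k\|_{L^2}+\|\d_C\delta_C\alpha_k\|_{L^2}$ (resp. $\|\delta_C\alpha_k\|_{L^2}+\|\delta_C\d_C\alpha_k\|_{L^2}$), so in the limit one obtains $\d_C\alpha=\d_C\delta_C\alpha=0$ (resp. $\delta_C\alpha=\delta_C\d_C\alpha=0$). These are precisely the conditions that Proposition \ref{prop.characharmRum} identifies with harmonicity in middle degree, so the weak limit $\alpha$ still lies in $\mathcal{H}^h$, and the same orthogonality-versus-harmonicity contradiction closes the argument.
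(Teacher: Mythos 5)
Your proof is correct and takes essentially the same route as the paper: the paper simply notes that this proposition is the $p=2$ instance of Theorem \ref{teo.scottstimaperpoinc}, whose proof is exactly your contradiction argument via the Gaffney inequality, weak compactness, and Proposition \ref{prop.characharmRum}. If anything, your version is slightly more careful, since you explicitly invoke the compact embedding of Remark \ref{teo.compactembcontact} to get $\|\alpha\|_{L^2}=1$ for the weak limit, a step the paper's proof of the general theorem leaves implicit when deriving the final contradiction.
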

This proposition is a particular instance of Proposition \ref{teo.scottstimaperpoinc}, proved in Section 4.
Having in mind Proposition \ref{teo.scottstimaperpoinc uno} 
%
we define now the following Dirichlet-type integrals:
\begin{equation*}
||D\alpha||:=\left\{
\begin{array}{ll}
\left(\|\d_C\alpha\|^2_{L^2}+\|\delta_C\alpha\|^2_{L^2}\right)^{\frac{1}{2}},\quad&\text{if $h\neq n,n+1$}, \\
\left(\|\d_C\alpha\|^2_{L^2}+\|\d_C\delta_C\alpha\|^2_{L^2}\right)^{\frac{1}{2}},\quad&\text{if $h=n$}, \\
\left(\|\delta_C\d_C\alpha\|^2_{L^2}+\|\delta_C\alpha\|^2_{L^2}\right)^{\frac{1}{2}},\quad&\text{if $h=n+1$}.
\end{array}\right.
\end{equation*}
We observe that the Dirichlet integrals are norms in $W^{\ell,2}\cap\left(\mathcal{H}^h\right)^\bot$ due to Proposition \ref{teo.scottstimaperpoinc}. Indeed, the only thing that we have to verify is the positiveness. If $\alpha\in W^{\ell,2}\cap\left(\mathcal{H}^h\right)^\bot$ is such that $||D\alpha||=0$, then $$\|\d_C\alpha\|_{L^2}=\|\delta_C\alpha\|_{L^2}=0,$$ thus $\d_C\alpha=\delta_C\alpha=0$. By characterization of harmonic forms, we have that $\alpha\in\mathcal{H}^h$. On the other hand, we have that $\alpha\in\left(\mathcal{H}^h\right)^\bot$ by construction. Thus, $\alpha=0$.


Now, by a  Morrey-type variational approach in $L^2$ (see Theorem $7.3.3$ in \cite{morrey}) we are able to prove the existence result related to $\Delta_M$ stated in Theorem \ref{teo.733morreyexistence}.



\begin{teo}\label{teo.733morreyexistence-app}
Let $\alpha\in L^2(M,E_0^h)$ such that $\alpha$ is orthogonal to $\mathcal{H}^h$. Thus,
\begin{enumerate}
\item if $h\neq n,n+1$, there exists a unique form $\gamma\in W^{1,2}(M,E_0^h)\cap\left(\mathcal{H}^h\right)^\bot$ such that
\begin{equation}\label{eq.soldeboleLaphodgegen uno}
(\d_C\gamma,\d_C\zeta)+(\delta_C\gamma,\delta_C\zeta)=(\alpha,\zeta),\quad\quad\forall\,\zeta\in W^{1,2}(M,E_0^h);
\end{equation}
\item if $h=n$, there exists a unique form $\gamma\in W^{2,2}(M,E_0^n)\cap\left(\mathcal{H}^n\right)^\bot$ such that
\begin{equation}\label{eq.soldeboleLaphodgen due}
(\d_C\delta_C\gamma,\d_C\delta_C\zeta)+(\d_C\gamma,\d_C\zeta)=(\alpha,\zeta),\quad\quad\forall\,\zeta\in W^{2,2}(M,E_0^n);
\end{equation}
\item if $h=n+1$, there exists a unique form $\gamma\in W^{2,2}(M,E_0^{n+1})\cap\left(\mathcal{H}^{n+1}\right)^\bot$ such that
\begin{equation}\label{eq.soldeboleLaphodgenp1 tre}
(\delta_C\d_C\gamma,\delta_C\d_C\zeta)+(\delta_C\gamma,\delta_C\zeta)=(\alpha,\zeta),\quad\quad\forall\,\zeta\in W^{2,2}(M,E_0^{n+1}).
\end{equation}
\end{enumerate}
\end{teo}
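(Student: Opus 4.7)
The plan is to realize $\gamma$ as the unique solution of a Lax--Milgram problem on the Hilbert space
$$
\mathcal{X}:=W^{\ell,2}(M,E_0^h)\cap(\mathcal H^h)^\perp,
$$
where $\ell=1$ for $h\neq n,n+1$ and $\ell=2$ for $h\in\{n,n+1\}$, and $(\mathcal H^h)^\perp$ is taken in $L^2$. Note that $\mathcal X$ is a closed subspace of $W^{\ell,2}(M,E_0^h)$, hence itself a Hilbert space, because $\mathcal H^h$ is finite dimensional by Proposition \ref{teo.findimkerlap-appendice}. On $\mathcal X$ I would consider the symmetric bilinear form
$$
B(\gamma,\zeta):=\begin{cases}(\d_C\gamma,\d_C\zeta)+(\delta_C\gamma,\delta_C\zeta), & h\neq n,n+1,\\ (\d_C\delta_C\gamma,\d_C\delta_C\zeta)+(\d_C\gamma,\d_C\zeta), & h=n,\\ (\delta_C\d_C\gamma,\delta_C\d_C\zeta)+(\delta_C\gamma,\delta_C\zeta), & h=n+1,\end{cases}
$$
together with the continuous linear functional $L(\zeta):=(\alpha,\zeta)$, so that the three identities \eqref{eq.soldeboleLaphodgegen uno}--\eqref{eq.soldeboleLaphodgenp1 tre} all take the uniform shape $B(\gamma,\zeta)=L(\zeta)$.

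Boundedness of $L$ on $\mathcal X$ is immediate from Cauchy--Schwarz. Boundedness of $B$ on $W^{\ell,2}(M,E_0^h)$ rests on the orders of the Rumin operators (recall that $\d_C$ has order $1$ except in degree $n$, where it has order $2$, and symmetrically for $\delta_C$ in degree $n+1$): a case-by-case inspection shows that every factor appearing in $B(\gamma,\zeta)$ maps $W^{\ell,2}(M,E_0^h)$ continuously into $L^2$ of the corresponding target bundle, whence $|B(\gamma,\zeta)|\le C\|\gamma\|_{W^{\ell,2}}\|\zeta\|_{W^{\ell,2}}$. The crux is coercivity on $\mathcal X$: since by construction $B(\gamma,\gamma)=||D\gamma||^2$, the Poincaré-type estimate of Proposition \ref{teo.scottstimaperpoinc uno} gives $\|\gamma\|_{W^{\ell,2}}^2\le C\,B(\gamma,\gamma)$ for every $\gamma\in\mathcal X$. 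The Lax--Milgram theorem (in fact the Riesz representation theorem, $B$ being symmetric) then produces a unique $\gamma\in\mathcal X$ satisfying $B(\gamma,\zeta)=L(\zeta)$ for all $\zeta\in\mathcal X$.

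Finally I would upgrade the identity from $\zeta\in\mathcal X$ to arbitrary $\zeta\in W^{\ell,2}(M,E_0^h)$ via the orthogonal splitting $\zeta=H(\zeta)+(\zeta-H(\zeta))$, where $H(\zeta)\in\mathcal H^h\subset C^\infty(M,E_0^h)$ is the $L^2$-projection onto the finite-dimensional harmonic subspace. By Proposition \ref{prop.characharmRum} the harmonic component is annihilated by both $\d_C$ and $\delta_C$, so $B(\gamma,H(\zeta))=0$; and $L(H(\zeta))=(\alpha,H(\zeta))=0$ by the standing hypothesis $\alpha\perp\mathcal H^h$. The remaining summand already lies in $\mathcal X$ and is handled by the Lax--Milgram step; uniqueness in $W^{\ell,2}(M,E_0^h)\cap(\mathcal H^h)^\perp$ is inherited from the uniqueness on $\mathcal X$. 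The main obstacle is the coercivity of $B$, but it has effectively been taken care of in advance: once Proposition \ref{teo.scottstimaperpoinc uno} (which ultimately rests on the Gaffney inequality of Theorem \ref{teo.GaffIneqcontman}) is granted, the remaining work amounts to bookkeeping about operator orders and the splicing of test functions through the harmonic projection, with the three regimes $h\neq n,n+1$, $h=n$, $h=n+1$ treated in parallel.
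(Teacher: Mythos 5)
Your proposal is correct, and every ingredient you invoke is available in the paper at the point where this theorem is proved. The route differs from the paper's only in its functional-analytic packaging: the paper follows Morrey's direct method, minimizing the functional $I(\beta)=||D\beta||^2-2(\beta,\alpha)$ over $W^{\ell,2}(M,E_0^h)\cap(\mathcal H^h)^\bot$ (weak lower semicontinuity of $||D\cdot||$, coercivity of $I$ via the Young inequality, existence of a minimizer, and then the first-variation computation $I(\gamma)\le I(\gamma+t\zeta)$ to derive the Euler--Lagrange identity), whereas you apply Lax--Milgram (equivalently, Riesz representation for the symmetric form $B$) directly. For a bounded, symmetric, coercive bilinear form these are the same theorem, and both hinge on exactly the same key estimate: the Poincar\'e-type inequality of Proposition \ref{teo.scottstimaperpoinc uno}, which supplies coercivity of $B$ on $(\mathcal H^h)^\bot$ and which ultimately rests on the Gaffney inequality of Theorem \ref{teo.GaffIneqcontman} together with the finite-dimensionality of $\mathcal H^h$. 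What your version buys is brevity — no lower-semicontinuity discussion and no explicit variation in $t$ — while the paper's version makes the variational structure explicit, which is the thread it follows from Morrey throughout Appendix A. Your final step (splitting a general test form $\zeta=H(\zeta)+(\zeta-H(\zeta))$ and using $\d_C H(\zeta)=\delta_C H(\zeta)=0$ together with $\alpha\perp\mathcal H^h$ to pass from test forms in $\mathcal X$ to all of $W^{\ell,2}$) is the same observation the paper makes via Proposition \ref{prop.characharmRum}, and your uniqueness argument coincides with the paper's.
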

\begin{proof}
Let $\ell=2$ if $h=n,n+1$ or $\ell=1$ otherwise. 

\textit{{Uniqueness}}. If $\gamma_1,\gamma_2\in W^{\ell,2}(M,E_0^h)$ such that satisfy \eqref{eq.soldeboleLaphodgegen uno}-\eqref{eq.soldeboleLaphodgenp1 tre}, then in particular
$$(\gamma_1-\gamma_2,\Delta_{M,h}\zeta)=0,\quad\quad \forall\,\zeta\in C^\infty(M,E_0^h),$$ 
i.e.\,$\Delta_{M,h}(\gamma_1-\gamma_2)=0$ in distributional sense. Hence $\gamma_1-\gamma_2$ is harmonic. On the other hand, by hypothesis, $\gamma_1-\gamma_2\in\left(\mathcal{H}^h\right)^\bot$, thus $\gamma_1=\gamma_2$.

%

\medskip
 \textit{{Existence}}.

{(Case $h\neq n, n+1$)}. We want to prove that the solution of \eqref{eq.soldeboleLaphodgegen uno} is a solution of the following minimization problem: 
\begin{equation}\label{eq.minprobteoH}
\min\Big\{I(\beta):=||D\beta||^2-2(\beta,\alpha)\,\,\Big|\,\, \beta\in W^{1,2}(M,E_0^h)\cap\left(\mathcal{H}^h\right)^\bot\Big\}.
\end{equation}
At first, we observe that $I$ is a lower semicontinuous functional with respect to the weak convergence in $W^{1,2}(M,E_0^h)$. Indeed,  since beeing orthogonal to the harmonic forms is stable under $W^{\ell,2}$-convergence, and since $||D\beta||$ is an equivalent norm in $W^{\ell,2}\cap\left(\mathcal{H}\right)^\bot$, we have that $||D\beta||$ is lower semicontinuous with respect to the weak convergence in $W^{\ell,2}$.
Hence, if $\beta, (\beta_k)_k\subseteq \beta\in W^{1,2}(M,E_0^h)\cap\left(\mathcal{H}^h\right)^\bot$ such that $\beta_k\rightharpoonup\beta$ in $W^{1,2}(M,E_0^h)$, since also $(\beta_k,\alpha)\to (\beta,\alpha)$, thus, 
\begin{align*}
I(\beta)&=\|\d_C\beta\|_{L^2}^2+\|\delta_C\beta\|_{L^2}^2-2(\beta,\alpha)\\
&\leq\liminf_{k\to+\infty}\Big(\|\d_C\beta_k\|_{L^2}^2+\|\delta_C\beta_k\|_{L^2}^2-2(\beta_k,\alpha)\Big)=\liminf_{k\to+\infty} I(\beta_k).
\end{align*}

Moreover, the functional $I$ is coercive. Indeed, if $\beta\in W^{1,2}(M,E_0^h)\cap\left(\mathcal{H}^h\right)^\bot$, by the Cauchy-Schwartz and Young inequalities, considering $1/C>0$ as the constant in Proposition \ref{teo.scottstimaperpoinc uno}, we have
\begin{equation}\label{eq.Youngineqhodgennn}
(\beta,\alpha)\leq\frac{C}{2}\|\beta\|^2_{L^2}+\frac{2}{C}\|\alpha\|^2_{L^2}.
\end{equation}
Thus, for every $\beta\in W^{1,2}(M,E_0^h)\cap\left(\mathcal{H}^h\right)^\bot$, we have proved
\begin{equation}\label{eq.stimaIbeta}
I(\beta)=||D\beta||^2-2(\beta,\alpha)\stackrel{\eqref{eq.Youngineqhodgennn}}{\geq}\frac{C}{2}||\beta||_{W^{1,2}}^2-\frac{2}{C}\|\alpha\|^2_{L^2}
\end{equation}
and
\begin{equation}\label{eq.stimaIinflimitato}
I(\beta)\geq-\frac{2}{C}\|\alpha\|^2_{L^2}\in\R,\quad\quad\forall\,\beta\in W^{1,2}(M,E_0^h)\cap\left(\mathcal{H}^h\right)^\bot,
\end{equation}
showing that $I$ is coercive. 
By direct methods in calculus of variations, we have that $I$ admits a minimum $\gamma$ among $\mathcal{A}$.
Now, for every $\zeta\in\mathcal{A}$ and for every $t\in\R$, we have
$$I(\gamma)\leq I(\gamma+t\zeta)=I(\gamma)+2t\Big((\d_C\gamma,\d_C\zeta)+(\delta_C\gamma,\delta_C\zeta)-(\alpha,\zeta)\Big)+t^2||D\zeta||^2,$$
and it is simple to prove that 
\begin{equation}\label{eq.risnonnnp1}(\delta_C\d_C\gamma,\delta_C\d_C\zeta)+(\delta_C\gamma,\delta_C\zeta)=(\alpha,\zeta).
\end{equation}
Recalling that $\alpha\in\left(\mathcal{H}^h\right)^\bot$, by Proposition \ref{prop.characharmRum}, we have that \eqref{eq.risnonnnp1} holds for every $\zeta\in W^{1,2}(M,E_0^h)$, completing the proof in the case $h\neq n, n+1$.

\noindent{(Case $h= n$)}.  We consider the analogous minimizing problem \eqref{eq.minprobteoH} with $\mathcal{A}:=W^{2,2}(M,E_0^h)\cap\left(\mathcal{H}^h\right)^\bot$. In the same way of the case $h\neq n, n+1$, we have the inequality
\begin{equation}\label{eq.stimaILSCinflimncase}
I(\beta)\geq\frac{C}{2}||\beta||_{W^{2,2}}^2-\frac{2}{C}\|\alpha\|^2_{L^2},\quad\quad\forall\,\beta\in W^{2,2}(M,E_0^h)\cap\left(\mathcal{H}^h\right)^\bot,
\end{equation}
and the functional $I$ is again a lower semicontinuous operator with respect to the weak convergence in $W^{2,2}(M,E_0^h)$ and coercive in $\mathcal{A}$. As previously argued, we obtain the the existence of a minimum $\gamma$ of $I$ in $\mathcal{A}$, and, for every $\zeta\in\mathcal{A}$ and for every $t\in\R$, we have
$$I(\gamma)\leq I(\gamma+t\zeta)=I(\gamma)+2t\Big((\d_C\gamma,\d_C\zeta)+(\d_C\delta_C\gamma,\d_C\delta_C\zeta)-(\alpha,\zeta)\Big)+t^2||D\zeta||^2.$$
Proceeding as before, we obtain the thesis for the case $h=n$. The case $h=n+1$ is similar to the previous one.
\end{proof}

\begin{oss}\label{oss.contweaksolL2-app}
The operator that for every $\alpha\in L^2(M,E_0^h)\cap\left(\mathcal{H}^h\right)^\bot$ associates $\gamma\in W^{\ell,2}(M,E_0^h)\cap\left(\mathcal{H}^h\right)^\bot$  is a continuous operator. Indeed, by \eqref{eq.stimaIbeta} and \eqref{eq.stimaILSCinflimncase}, since $\beta=0\in\mathcal{A}$ and $\gamma$ is the minimum of $I$ in $\mathcal{A}$, we have
$$\frac{C}{2}\|\gamma\|^2_{W^{\ell,2}(M,E_0^h)}-\frac{2}{C}\|\alpha\|^2_{L^2(M,E_0^h)}\leq I(\gamma)\leq I(0)=0.$$
Hence, for $\ell=2$ if $h=n,n+1$ and $\ell=1$ otherwise, we get
\begin{equation}\label{eq.contL2weksoll2-app}
\|\gamma\|_{W^{\ell,2}(M,E_0^h)}\leq\frac{2}{C}\|\alpha\|_{L^2(M,E_0^h)},\quad\quad\forall\,\alpha\in L^2(M,E_0^h)\cap\left(\mathcal{H}^h\right)^\bot.
\end{equation}
\end{oss}

\section{Appendix B: a consequence of the Gaffney-inequality}\label{appendix b}
We recall an inequalities proved in the setting of Heisenberg groups for differential forms in $C_0^\infty(\he{n}, E_0^h)$ (see \cite{BF7}, Remark 5.3, i), iii),vi) therein).
\begin{lemma}\label{gaffney-Hn-qui} Let $1\le h\le 2n$,   and $1<p<\infty$.
\begin{itemize}
\item[i)]  then there
exists $C>0$ such that for all $u \in {W^{1,p}(\he{n}, E_0^h)}$ with $h\neq n, n+1$
\begin{equation}\label{p>1:eq1}\begin{split}
\| u \|_{W^{1,p}(\he{n}, E_0^h)} &
\le C\big( \| \d_C u \|_{L^{p}(\he{n}, E_0^{h+1})} +
\| \delta_C u \|_{L^{p}(\he{n}, E_0^{h-1})}
\\& \hphantom{xxxxx}+ \|u\|_{L^p(\he{n}, E_0^h)}
\big).
\end{split}\end{equation}
Moreover,
\item[ii)]  if $h= n$,   and $1<p<\infty$, then there
exists $C>0$ such that for all $u \in {W^{2,p}(\he{n}, E_0^n)}$
\begin{equation}\label{treno}\begin{split}
\| u \|_{W^{2,p}(\he{n}, E_0^n)} &
\le C\big( \| \d_C u \|_{L^{p}(\he{n}, E_0^{n+1})} +
\| \d_C\delta_C u \|_{L^{p}(\he{n}, E_0^{n})}
\\&
 \hphantom{xxxxx} +  \|u\|_{L^p(\he{n}, E_0^n)}\big);
\end{split}\end{equation}
\item[iii)]  if $h=  n+1$,   and $1<p<\infty$, then there
exists $C>0$ such that for all $u \in {W^{2,p}_{\he{}}(\he{n}, E_0^{n+1})}$
\begin{equation}\label{p>1:eq2bis}\begin{split}
\| u \|_{W^{2,p}(\he{n}, E_0^{n+1})} &
\le C\big( \| \delta_C \d_C u \|_{L^{p}(\he{n}, E_0^{n+1})} +
\| \delta_C u \|_{L^{p}(\he{n}, E_0^{n})}
\\&
 \hphantom{xxxxx} +  \|u\|_{L^p(\he{n}, E_0^{n+1})}\big).
\end{split}\end{equation}
\end{itemize}
\end{lemma}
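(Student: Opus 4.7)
The plan is to deduce the stated inequalities from the cited versions in \cite{BF7}, which hold for $u \in C_0^\infty(\he{n}, E_0^h)$, by a straightforward density argument. The key observation is that every operator appearing on the right-hand side is a left-invariant homogeneous differential operator in the horizontal derivatives whose (homogeneous) order is compatible with the Sobolev space on the left-hand side, so it acts continuously from that Sobolev space into the corresponding $L^p$ space of forms.

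First I would record the continuity of the relevant operators. In case (i), since $h\neq n,n+1$, both $\d_C^{\he{}}$ and $\delta_C^{\he{}}$ are horizontal differential operators of order $1$ when acting on $E_0^h$, so they are bounded from $W^{1,p}(\he{n},E_0^h)$ into $L^p(\he{n},E_0^{h\pm 1})$. In case (ii), on $E_0^n$ the operator $\d_C^{\he{}}$ is of order $1$, $\delta_C^{\he{}}$ is of order $1$, and the composition $\d_C^{\he{}}\delta_C^{\he{}}$ (which lands in $E_0^n$, going through $E_0^{n-1}$ where no order-$2$ jump occurs) is of order $2$, hence all three are bounded from $W^{2,p}(\he{n},E_0^n)$ to $L^p$. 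In case (iii), on $E_0^{n+1}$ the operator $\delta_C^{\he{}}$ is of order $2$, $\d_C^{\he{}}$ is of order $1$, and $\delta_C^{\he{}}\d_C^{\he{}}$ is of order $2$ (factoring through $E_0^{n+2}$, a degree in which both $\d_C^{\he{}}$ and $\delta_C^{\he{}}$ are of order $1$), so all three are bounded from $W^{2,p}(\he{n},E_0^{n+1})$ to $L^p$. Let $m=1$ in case (i) and $m=2$ in cases (ii)--(iii).

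Second, given $u\in W^{m,p}(\he{n},E_0^h)$, by Theorem \ref{teo.denseSobspHn}(3), applied component-wise in the left-invariant frame $\Xi_0^h$ of $E_0^h$, I can choose a sequence $u_j\in C_0^\infty(\he{n},E_0^h)$ with $u_j\to u$ in $W^{m,p}(\he{n},E_0^h)$. Applying the $C_0^\infty$-version of the inequality (quoted from \cite{BF7}, Remark 5.3) to each $u_j$ yields, for instance in case (i),
\begin{equation*}
\|u_j\|_{W^{1,p}}\le C\big(\|\d_C^{\he{}} u_j\|_{L^p}+\|\delta_C^{\he{}} u_j\|_{L^p}+\|u_j\|_{L^p}\big),
\end{equation*}
and analogously in cases (ii) and (iii). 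By the continuity properties above, each term on the right converges to the corresponding term with $u_j$ replaced by $u$, while the left-hand side converges to $\|u\|_{W^{m,p}}$. Passing to the limit yields the claimed inequalities \eqref{p>1:eq1}, \eqref{treno}, \eqref{p>1:eq2bis} with the same constant $C$.

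The only genuinely delicate point, and what I would view as the ``main'' thing to watch, is the bookkeeping of horizontal orders of $\d_C^{\he{}}$ and $\delta_C^{\he{}}$ in middle degrees: one must verify that the ambient Sobolev order ($W^{1,p}$ in case (i), $W^{2,p}$ in cases (ii)--(iii)) is enough to make every operator appearing on the right-hand side continuously land in $L^p$, accounting for the jump from order $1$ to order $2$ that $\d_C^{\he{}}$ has on $E_0^n$ and $\delta_C^{\he{}}$ has on $E_0^{n+1}$. Once this is checked case by case as above, no further ingredients (such as the Gaffney inequality on $M$ or smoothing homotopies) are required.
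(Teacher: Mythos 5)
Your proposal is correct and supplies precisely the step the paper leaves implicit: the paper gives no proof of Lemma \ref{gaffney-Hn-qui}, introducing it as a recollection of inequalities from \cite{BF7} that are stated there for forms in $C_0^\infty(\he{n},E_0^h)$, while the lemma itself is formulated on $W^{1,p}$, respectively $W^{2,p}$. Your two ingredients --- density of $C_0^\infty(\he{n},E_0^h)$ in the Folland--Stein space $W^{m,p}(\he{n},E_0^h)$ (Theorem \ref{teo.denseSobspHn}, part (3), applied componentwise in the left-invariant frame) and the boundedness of the constant-coefficient left-invariant horizontal operators on the right-hand side from $W^{m,p}$ into $L^p$ --- are exactly what is needed to pass to the limit with the same constant, and the argument is in the same spirit as the mollification proof the paper does write out for Corollary \ref{gaffney-Hn_Spazi_Sobolev-per equivalenza}.

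One slip in your order bookkeeping for case (ii): you assert that $\d_C$ is of order $1$ on $E_0^n$, whereas it is the second-order operator of the complex (as the paper records in Section \ref{subsec:RumcomplHn}, and as you yourself acknowledge in your closing paragraph). The slip is harmless for the argument, because a left-invariant homogeneous horizontal operator of order $2$ is still bounded from $W^{2,p}(\he{n},E_0^n)$ to $L^p(\he{n},E_0^{n+1})$ by the very definition of the Folland--Stein norm (all monomials with $d(J)\le 2$ are controlled), which is all your limit passage requires; the orders you assign to $\delta_C$ on $E_0^n$ and to $\d_C\delta_C$ (factoring through $E_0^{n-1}$) are correct, as is the whole of case (iii). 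You should simply correct that one sentence so that the justification matches the conclusion.
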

 group.
From the previous lemma we can  easily prove:
\begin{coro}\label{gaffney-Hn_Spazi_Sobolev-per equivalenza} Let $1<p<\infty$ and $1\le h\le 2n$. Let $u\in L^p(\he{n}, E_0^h)$ compactly supported.

\begin{itemize}
\item[i)] If $h\neq n, n+1$, suppose also  that $\d_C u\in L^{p}_{\he{}}(\he{n}, E_0^{h+1}),\, \, \delta_C u \in L^{p}(\he{n}, E_0^{h-1})$.  Then $u\in W^{1,p}(\he{n}, E_0^{h})$.
\item[ii)]
If $h=n$, suppose also   
 $\d_C u\in L^{p}(\he{n}, E_0^{n+1})$ and $\d_C\delta_C u \in L^{p}(\he{n}, E_0^{n})$. Then $u\in {W^{2,p}(\he{n}, E_0^n)} $.  
\item[iii)]
If $h=  n+1$, suppose also  $\delta_C\d_C u\in L^{p}(\he{n}, E_0^{n+1}),$ and $\delta_C u \in L^{p}(\he{n}, E_0^{n})$. Then
 $u\in {W^{2,p}(\he{n}, E_0^{n+1})}$
\end{itemize}
\end{coro}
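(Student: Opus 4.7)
The plan is to approximate $u$ by smooth compactly supported forms via group convolution, apply the Heisenberg Gaffney inequalities of Lemma \ref{gaffney-Hn-qui} to obtain uniform Sobolev bounds on the approximations, and then extract a weak limit.

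I would fix a standard mollifier $\{\rho_\epsilon\}_{\epsilon>0} \subseteq C_0^\infty(\he{n})$ with $\rho_\epsilon \geq 0$, $\mathrm{supp}\,\rho_\epsilon \subseteq B_\H(e,\epsilon)$ and $\int_{\he{n}} \rho_\epsilon = 1$, and set $u_\epsilon := u * \rho_\epsilon$, meaning component-wise convolution in a fixed left-invariant frame of $E_0^h$. Since $u$ has compact support, each $u_\epsilon$ lies in $C_0^\infty(\he{n},E_0^h)$ with supports contained in a single fixed compact set for all small $\epsilon$. The key observation is that the Rumin operators $\d_C$ and $\delta_C$ on $\he{n}$ are constant-coefficient combinations of left-invariant horizontal vector fields in this frame and therefore commute with right convolution: in the distributional sense
$$
\d_C u_\epsilon = (\d_C u) * \rho_\epsilon, \qquad \delta_C u_\epsilon = (\delta_C u) * \rho_\epsilon,
$$
and likewise $\d_C\delta_C u_\epsilon = (\d_C\delta_C u)*\rho_\epsilon$ and $\delta_C\d_C u_\epsilon = (\delta_C\d_C u)*\rho_\epsilon$. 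Under the $L^p$-assumptions of the statement, each right-hand side is thus an honest $L^p$ form.

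In each of the three cases I would then apply the corresponding inequality from Lemma \ref{gaffney-Hn-qui} to the smooth compactly supported $u_\epsilon$. Using Young's inequality $\|f * \rho_\epsilon\|_{L^p} \leq \|f\|_{L^p}$ on every term of the right-hand side, these bounds become uniform in $\epsilon$: the family $\{u_\epsilon\}$ is uniformly bounded in $W^{1,p}(\he{n},E_0^h)$ in case (i) and in $W^{2,p}(\he{n},E_0^h)$ in cases (ii)--(iii).

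Finally, I would pass to the limit $\epsilon \to 0$. By the reflexivity of these Sobolev spaces (Theorem \ref{teo.denseSobspHn}), a subsequence $u_{\epsilon_k}$ converges weakly in the relevant Sobolev space, while standard mollification gives $u_{\epsilon_k} \to u$ strongly in $L^p(\he{n},E_0^h)$. Uniqueness of weak $L^p$-limits identifies the weak Sobolev limit with $u$, yielding $u$ in the claimed Sobolev space. The main delicate point is justifying the commutation identity for the second-order operators $\d_C\delta_C$ and $\delta_C\d_C$ in cases (ii) and (iii); this is routine because both are left-invariant, but it does require writing out the distributional pairings carefully since we only assume the compound operators, not their individual pieces, to lie in $L^p$.
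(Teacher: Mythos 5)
Your strategy coincides with the paper's: mollify $u$ with a group Friedrichs kernel, apply the Gaffney inequalities of Lemma \ref{gaffney-Hn-qui} to the smooth compactly supported regularizations, get uniform Sobolev bounds via Young's inequality, and pass to a weak limit identified with $u$ by testing against $C_0^\infty$ functions. The one point that needs repair is the side of the convolution. On the noncommutative group $\he{n}$ a left-invariant vector field $X$ satisfies $X(f\ast g)=f\ast(Xg)$, i.e.\ it falls on the \emph{right} factor of the convolution. With your choice $u_\epsilon:=u\ast\rho_\epsilon$ you therefore get $\d_C u_\epsilon=u\ast(\d_C\rho_\epsilon)$, and the identity you rely on, $\d_C(u\ast\rho_\epsilon)=(\d_C u)\ast\rho_\epsilon$, is false in general (it would hold for right-invariant operators, or on an abelian group). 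The fix is simply to put the mollifier on the left, $u_\epsilon:=\rho_\epsilon\ast u$, which is exactly what the paper does with $\omega_\epsilon\ast u$: then $\d_C(\rho_\epsilon\ast u)=\rho_\epsilon\ast(\d_C u)$, $\delta_C(\rho_\epsilon\ast u)=\rho_\epsilon\ast(\delta_C u)$, and likewise for the second-order compositions $\d_C\delta_C$ and $\delta_C\d_C$, since any left-invariant differential operator passes onto the right factor in the distributional sense; the approximate-identity convergence $\rho_\epsilon\ast f\to f$ in $L^p$ and Young's inequality $\|\rho_\epsilon\ast f\|_{L^p}\le\|f\|_{L^p}$ still hold. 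With that change the rest of your argument (uniform bounds, reflexivity, weak limit, identification by duality) goes through and matches the paper's proof, which extracts the weak limit of the horizontal derivatives componentwise and concludes by lower semicontinuity of the norm.
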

\begin{proof} Let us prove i).
If $\eps<1$, let
$\omega_\epsilon$ be an usual
Friedrichs' mollifier (for the group structure) and  
let us consider  $\omega_\epsilon\ast u$. We have $\omega_\epsilon\ast u\to u$ in $L^p$,  $\d_C(\omega_\epsilon\ast u)=\omega_\epsilon\ast \d_C u\to \d_C u$ in in $L^p$ and $\delta_C(\omega_\epsilon\ast u)=\omega_\epsilon\ast \delta_C u\to \delta_C u$ in in $L^p$. Hence by i) in Lemma \ref{gaffney-Hn-qui} we have
\begin{equation}\label{regolarizzata}\begin{split}
\| \omega_\epsilon\ast u \|_{W^{1,p}(\he{n}, E_0^h)} &
\le C\big( \| \d_C(\omega_\epsilon\ast u) \|_{L^{p}(\he{n}, E_0^{h+1})} +
\| \delta_C (\omega_\epsilon\ast u) \|_{L^{p}(\he{n}, E_0^{h-1})}
\\& \hphantom{xxxxx}+ \|\omega_\epsilon\ast u\|_{L^p(\he{n}, E_0^h)}
\big).
\end{split}\end{equation}

To prove that $u\in {W^{1,p}(\he{n}, E_0^h)}$ we have to show that any components are in ${W^{1,p}(\he{n})}$.  

If $W$ is an horizontal derivative, then there exist a constant $c$ so that
$$
\|W(\omega_\epsilon\ast u )\|_{L^p}\le c
$$
uniformly on $\e$. We still denote a component of $\omega_\e\ast u$ with $\omega_\epsilon\ast u$ and a component of $u$ with $u$. Up to a subsequence, we have $W(\omega_\epsilon\ast u )\rightharpoonup v$ weakly-$L^p$ for some $L^p$-function $v$, and $\omega_\epsilon\ast u\to u$ in $L^p$. It is immediate to show  that $v=W u$ in a weak sense. Indeed, if $\varphi\in C_0^\infty(\he n)$,
$$
\int v\varphi=\lim_{\epsilon\to 0}\int W(\omega_\epsilon\ast u )\varphi=-\lim_{\epsilon\to 0}\int (\omega_\epsilon\ast u)\,W\varphi
$$
 By the lower semicontinuity of the norm
\begin{equation}\begin{split}
\|W u\|_{L^p}&\le \liminf_{\epsilon\to 0} \|W(\omega_\epsilon\ast u )\|_{L^p}
\le \liminf_{\epsilon\to 0} \|\omega_\epsilon\ast u\|_{W^{1,p}(\he{n}, E_0^h)}
\\&
\le C\liminf_{\epsilon\to 0}\Big( \| \d_C(\omega_\epsilon\ast u) \|_{L^p(\he{n}, E_0^{h+1})} +
\| \delta_C (\omega_\epsilon\ast u) \|_{L^p(\he{n}, E_0^{h-1})}
\\& \hphantom{xxxxxxxxxxx}+ \|\omega_\epsilon\ast u\|_{L^p(\he{n}, E_0^h)}
\Big)
\\& 
=C\big( \| \d_C u \|_{L^{p}(\he{n}, E_0^{h+1})} +
\| \delta_C  u \|_{L^{p}(\he{n}, E_0^{h-1})}
+ \| u\|_{L^p(\he{n}, E_0^h)}
\big)
\end{split}\end{equation}
Hence $u\in {W^{1,p}(\he{n}, E_0^h)}$ and we have proved i). The other case are similar.

\end{proof}
 We  prove the following result as a consequence of the previous statement in $\he{n}$. 
Let us set $$\mathcal{L}^{1,p}(M, E_0^h)=\{\alpha\in L^p(M, E_0^h): \d^M_C \alpha\in L^{p}(M, E_0^{h+1}),\, \, \delta^M_C \alpha \in L^{p}(M, E_0^{h-1})\}$$ when $h\neq n,n+1$ endowed with the norm
$$\|\alpha\|_{L^p(M,E_0^h)}+\|\d_C^M\alpha\|_{L^{p}(M,E_0^{h+1})}+\|\delta^M_C\alpha\|_{L^{p}(M,E_0^{h-1})}.$$

On the other hand, 
if $h=n$, we define the space 
$$\mathcal{L}_n^{2,p}(M, E_0^n)=\{\alpha\in L^p(M, E_0^n):\ \d^M_C \alpha\in L^{p}(M, E_0^{n+1}), \ \ \d^M_C\delta^M_C \alpha \in L^{p}(M, E_0^{n})\}$$ equipped with the norm  
$$\|\alpha\|_{L^p(M,E_0^n)}+\|\d_C^M\alpha\|_{L^{p}(M,E_0^{n+1})}+\|\d^M_C\delta^M_C\alpha\|_{L^{p}(M,E_0^{n})}.$$ 

Similarly, if 
 $h=n+1$, we set 
$$\mathcal{L}_{n+1}^{2,p}(M, E_0^{n+1})=\{\alpha\in L^p(M, E_0^{n+1}):\ \delta^M_C\d_C^M\alpha\in L^{p}(M, E_0^{n+1}), \ \ \delta^M_C \alpha \in L^{p}(M, E_0^{n})\}$$ equipped with the norm  
$$\|\alpha\|_{L^p(M,E_0^{n+1})}+\|\delta_C^M\alpha\|_{L^{p}(M,E_0^{n})}+\|\delta_C^M\d^M_C\alpha\|_{L^{p}(M,E_0^{n+1})}.$$ 

\begin{teo}\label{spazi uguali-appe}
 Let $1\leq h\leq 2n$, $1< p<\infty$. With the notation above, 
\begin{itemize}
	\item[i)] if $h\neq n, n+1$,  $\mathcal{L}^{1,p}(M, E_0^h)=W^{1,p}(M, E_0^h)$,
	\item[ii)] if $h= n$,  $\mathcal{L}_n^{2,p}(M, E_0^n)=W^{2,p}(M, E_0^n)$,
	\item[ii)] if $h= n+1$,  $\mathcal{L}_{n+1}^{2,p}(M, E_0^{n+1})=W^{2,p}(M, E_0^{n+1})$.
\end{itemize}
\end{teo}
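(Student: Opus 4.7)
\medskip

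\textbf{Plan of proof.} The inclusion $W^{\ell,p}(M,E_0^h)\subseteq\mathcal{L}^{\ell,p}(M,E_0^h)$ is clear: for $h\neq n,n+1$, $\d_C^M$ and $\delta_C^M$ are first-order horizontal differential operators with coefficients that, in the atlas $\{(B_M(x_j,\rho),\phi_j)\}$, are uniformly bounded in $C^{k-1}$, so they are bounded from $W^{1,p}$ into $L^p$; for $h=n,n+1$, the same reasoning covers the compositions $\d_C^M\delta_C^M$ and $\delta_C^M\d_C^M$ of total horizontal order $2$, which are bounded from $W^{2,p}$ into $L^p$. I would present this direction as an immediate consequence of Definition \ref{defi.Sobolevcontmanbg}.

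The substantial content lies in the reverse inclusion. The strategy is to localize via the finite partition of unity $\{\chi_j\}_{j=1}^s$ subordinate to the atlas on the compact $M$, pull back to $\mathbb{H}^n$, and invoke the Heisenberg analogue Corollary \ref{gaffney-Hn_Spazi_Sobolev-per equivalenza}. More precisely, for $\alpha\in\mathcal{L}^{\ell,p}(M,E_0^h)$ I would set $u_j:=\phi_j^\#(\chi_j\alpha)\in L^p(\mathbb{H}^n,E_0^h)$, which has compact support in $B_\H(e,1)$. By the definition of the Sobolev norm on $M$, it suffices to prove $u_j\in W^{\ell,p}(\mathbb{H}^n,E_0^h)$ for every $j$, and by Corollary \ref{gaffney-Hn_Spazi_Sobolev-per equivalenza} this reduces to verifying that the relevant Rumin-type derivatives of $u_j$ belong to $L^p(\mathbb{H}^n)$ in the distributional sense (namely $\d_C^\H u_j,\delta_C^\H u_j$ if $h\neq n,n+1$; $\d_C^\H u_j,\d_C^\H\delta_C^\H u_j$ if $h=n$; $\delta_C^\H u_j,\delta_C^\H\d_C^\H u_j$ if $h=n+1$).

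For every operator involving only $\d_C^\H$ acting on $u_j$ the argument is short: the intertwining $\d_C^\H\phi_j^\#=\phi_j^\#\d_C^M$ is valid distributionally, so $\d_C^\H u_j=\phi_j^\#\d_C^M(\chi_j\alpha)$, which by a Leibniz-type expansion equals $\phi_j^\#(\chi_j\d_C^M\alpha+R_j\alpha)$ for a zero-order operator $R_j$ with bounded coefficients (since $\d_C^M$ is first order in the relevant degrees for the Leibniz step); the bi-Lipschitz character of $\phi_j$ then yields $\d_C^\H u_j\in L^p(\mathbb{H}^n)$. The delicate step, which I expect to be the main obstacle, is controlling the operators involving $\delta_C^\H$, because $\delta_C^\H$ does not commute with $\phi_j^\#$ (recall Proposition \ref{prop.notcommdeltapull} and the nontrivial error $\mathcal{P}$). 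I would handle this by testing against $\psi\in C_0^\infty(\mathbb{H}^n,E_0^{h-1})$: writing $\tilde\psi:=(\phi_j^{-1})^\#\psi$, performing the change of variables $q=\phi_j(p)$, and absorbing the Jacobian and metric pull-back factors into a bounded smooth tensor on $\phi_j(B_\H(e,1))$, one transforms the duality pairing $\langle u_j,\d_C^\H\psi\rangle_{\H^n}$ into a pairing on $M$ of the form $\langle F(\chi_j\alpha),\d_C^M\tilde\psi\rangle_M$, where $F$ is a bounded smooth bundle morphism depending only on $\phi_j$. An integration by parts in $M$ transfers $\d_C^M$ onto $F(\chi_j\alpha)$ as $\delta_C^M(F(\chi_j\alpha))$, which by a further Leibniz-type expansion is a bounded $L^p$-expression in $\chi_j\delta_C^M\alpha$ and $\alpha$. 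This gives the required $L^p$-bound for $\delta_C^\H u_j$ as a distribution, with norm controlled by $\|\alpha\|_{\mathcal{L}^{\ell,p}(M)}$.

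For the middle-degree cases $h=n,n+1$, an entirely parallel but iterated argument handles the composite operators $\d_C^\H\delta_C^\H u_j$ and $\delta_C^\H\d_C^\H u_j$: after using the $\d_C^\H/\phi_j^\#$ commutation on the outer factor, the inner $\delta_C^\H$ (resp.\ $\d_C^\H$) is transferred to $M$ by the same duality-plus-change-of-variables argument, producing the correct composite operator $\d_C^M\delta_C^M$ (resp.\ $\delta_C^M\d_C^M$) applied to $F(\chi_j\alpha)$, whose $L^p$-norm is finite by hypothesis together with the Leibniz expansion. Once each $u_j\in W^{\ell,p}(\mathbb{H}^n,E_0^h)$ is established through Corollary \ref{gaffney-Hn_Spazi_Sobolev-per equivalenza}, summing over the finite index $j$ and unwinding Definition \ref{defi.Sobolevcontmanbg} yields $\alpha\in W^{\ell,p}(M,E_0^h)$, completing the proof.
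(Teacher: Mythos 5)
Your overall strategy coincides with the paper's: localize with a finite partition of unity, pull back to $\H^n$, and feed the pullbacks into Corollary \ref{gaffney-Hn_Spazi_Sobolev-per equivalenza}; the paper simply defers the verification of the hypotheses of that corollary to the techniques of \cite{BTT}, whereas you try to make that verification self-contained. Your duality device for $\delta_C^\H u_j$ (testing against $\psi$, using the clean intertwining $\d_C^\H\phi_j^\#=\phi_j^\#\d_C^M$ on the test form, and pushing the pairing to $M$) is a reasonable alternative to the paper's use of the error operator $\mathcal P$ of Proposition \ref{prop.notcommdeltapull}, and in the degrees $h\neq n,n+1$ it goes through: there the commutators $[\delta_C^M,F\chi_j]$ are zero-order, so everything you need lies in $L^p$ by hypothesis.

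The gap is in the middle degrees. For $h=n$ the operator $\d_C^M$ on $E_0^n$ and the composite $\d_C^M\delta_C^M$ are both of horizontal order $2$, so your parenthetical claim that ``$\d_C^M$ is first order in the relevant degrees for the Leibniz step'' fails, and the Leibniz/commutator terms $[\d_C^M,\chi_j]\alpha$ and $[\d_C^M\delta_C^M,F\chi_j]\alpha$ are \emph{first-order} horizontal operators applied to $\alpha$ (this is exactly what Remark \ref{oss.stimecommdeltadC} records: these commutators are only controlled by the $W^{1,p}$-norm of the localized form). The hypotheses defining $\mathcal L_n^{2,p}$ give you $\alpha$, $\d_C^M\alpha$ and $\d_C^M\delta_C^M\alpha$ in $L^p$ but no control whatsoever on first horizontal derivatives of $\alpha$ --- indeed $\alpha\in W^{1,p}$ is part of what is being proved --- so the assertion that the composite expressions are ``finite by hypothesis together with the Leibniz expansion'' is unjustified, and your duality argument has no mechanism to close this circle. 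The paper's route avoids it: one first proves the a priori Gaffney inequality for smooth forms, where the dangerous $W^{1,p}$ commutator terms are absorbed into the left-hand side via Ehrling's inequality (as in the proof of Theorem \ref{teo.globalmaxhypocont}), and then passes to distributional data by group mollification in $\H^n$ (Corollary \ref{gaffney-Hn_Spazi_Sobolev-per equivalenza}), where mollification commutes with the left-invariant operators and no cut-off commutators arise. To repair your argument in degrees $n,n+1$ you would need either such an absorption/approximation step or an independent proof that $\alpha\in W^{1,p}$ before attacking $W^{2,p}$.
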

\begin{proof} Reasoning as in \cite{BTT}, with the same choice of atlas used for proving Theorem 5.4 in \cite{BTT}, and Theorem 2.22 in this paper, we obtain the result.
Indeed let us consider for example in the case i). Let $\alpha$  be an arbitrary element of $\mathcal{L}^{1,p}(M, E_0^h)$. Using a partition of unity, we may assume that $\alpha$ is supported in $\psi_j(B(e,1))$. Let $\psi_j^\#\alpha$ denote its pullback, that is compactly supported. We can apply to $\psi_j^\#\alpha$ Corollary \ref{gaffney-Hn_Spazi_Sobolev-per equivalenza}. Thus, the techniques developped in \cite{BTT} apply also to the simpler case  $M$ compact.
\end{proof}

\section*{Acknowledgements}

A.B. is supported by the University of Bologna, funds for selected research topics, and 
 by PRIN 2022 F4F2LH - CUP J53D23003760006 ``Regularity problems in sub-Riemannian structures'', and by GNAMPA of INdAM (Istituto Nazionale di Alta Matematica ``F. Severi''), Italy.

The work of A.R.\,has been partially funded by the ICSC foundation - Centro Nazionale di Ricerca in High Performance Computing, Big Data and Quantum Computing.

\bibliographystyle{IEEEtranS}

\bibliography{BR_submitted}

\bigskip
\tiny{
\noindent
Annalisa Baldi
\par\noindent
Universit\`a di Bologna, Dipartimento
di Matematica\par\noindent Piazza di
Porta S.~Donato 5, 40126 Bologna, Italy.
\par\noindent
e-mail:
annalisa.baldi2@unibo.it 
}

\medskip

\tiny{
\noindent
Alessandro Rosa 
\par\noindent Istituto Nazionale di Fisica Nucleare (INFN)
\par\noindent  Via Bruno Rossi $1$, 50019 Sesto Fiorentino (FI), Italy.
\par\noindent 
e-mail: alessandro.rosa@fi.infn.it}


\end{document}